\patchcmd{\thmhead}{(#3)}{#3}{}{}
\setlist[enumerate]{label=\mbox{(\textit{\roman*}\hspace{.08em})},font=\rm,itemsep=.25em}
\setlist[itemize]{itemsep=.25em}
\newcommand{\hair}{\ifmmode\mskip1.5mu\else\kern0.08em\fi}
\tikzset{>=stealth}
\newcommand{\tikzto}{\mathrel{\tikz[baseline]\draw[ ->,line width=.4pt] (0ex,0.65ex) -- (3ex,0.65ex);}}
\newcommand{\tikzmapsto}{\mathrel{\tikz[baseline]\draw[|->,line width=.4pt] (0pt,0.65ex) -- (3ex,0.65ex);}}
\newcommand{\tikzrightrightarrows}{\mathrel{\tikz[baseline]{\draw[->,line width=.4pt] (0ex,0.25ex) -- (3ex,0.25ex); \draw[->] (0ex,1.05ex) -- (3ex,1.05ex);}}}
\newcommand{\tikzlongrightarrow}{\mathrel{\tikz[baseline]\draw[->,line width=.4pt] (0ex,0.65ex) -- (4ex,0.65ex);}}
\newcommand{\tikzlongleftarrow}{\mathrel{\tikz[baseline]\draw[<-,line width=.4pt] (0ex,0.65ex) -- (4ex,0.65ex);}}
\newcommand{\tikzlongleftrightarrow}{\mathrel{\tikz[baseline]\draw[<->,line width=.4pt] (0ex,0.65ex) -- (4ex,0.65ex);}}
\newcommand{\tikzhookrightarrow}{\mathrel{\tikz[baseline]\draw[right hook->,line width=.4pt] (0ex,0.65ex) -- (3ex,0.65ex);}}
\newcommand{\tikzhookleftarrow}{\mathrel{\tikz[baseline]\draw[<-left hook,line width=.4pt] (0ex,0.65ex) -- (3ex,0.65ex);}}
\newcommand{\toarg}[1]{\mathrel{\tikz[baseline]\path[->,line width=.4pt] (0ex,0.65ex) edge node[above=-.4ex, overlay, font=\scriptsize] {$#1$} (3.5ex,.65ex);}}
\newcommand{\smallto}{\mathrel{\tikz[baseline]\path[->,line width=.3pt] (0ex,0.5ex) edge (1.5ex,.5ex);}}
\newcommand{\toarglong}[1]{\mathrel{\tikz[baseline]\path[->,line width=.4pt] (0ex,0.65ex) edge node[above=-.4ex, overlay, font=\scriptsize] {$#1$} (5ex,.65ex);}}
\newcommand{\leftarrowarg}[1]{\mathrel{\tikz[baseline]\path[<-,line width=.4pt] (0ex,0.65ex) edge node[above=-.4ex, overlay, font=\scriptsize,pos=.6] {$#1$} (3.5ex,.65ex);}}
\newcommand{\longleftarrowarg}[1]{\mathrel{\tikz[baseline]\path[<-,line width=.4pt] (0ex,0.65ex) edge node[above=-.4ex, overlay, font=\scriptsize,pos=.6] {$#1$} (5ex,.65ex);}}
\newcommand{\tosim}{\mathrel{\tikz[baseline] \path[->,line width=.4pt] (0ex,0.65ex) edge node[above=-.9ex, overlay, font=\normalsize, pos=.45] {$\sim$} (3.5ex,.65ex);}}
\newcommand{\hdot}{\bullet}
\newcommand{\ldot}{\bullet}
\newcommand{\llrr}[1]{%
\llbracket #1 \rrbracket}
\newcommand{\hatotimes}{\mathbin{\widehat{\otimes}}}
\newcommand{\blank}{\mkern.5mu {-} \mkern.5mu}
\renewcommand{\theta}{\vartheta}
\renewcommand{\phi}{\varphi}
\renewcommand{\emptyset}{\varnothing}
\numberwithin{equation}{section}
\newtheorem{theorem}[equation]{Theorem}
\newtheorem*{theorem*}{Theorem}
\newtheorem{proposition}[equation]{Proposition}
\newtheorem{question}[equation]{Question}
\newtheorem{lemma}[equation]{Lemma}
\newtheorem*{lemma*}{Lemma}
\newtheorem{corollary}[equation]{Corollary}
\newtheorem*{corollary*}{Corollary}
\theoremstyle{definition}
\newtheorem{definition}[equation]{Definition}
\newtheorem{example}[equation]{Example}
\theoremstyle{remark}
\newtheorem{remark}[equation]{Remark}
\patchcmd{\thmhead}{(#3)}{#3}{}{} 
\renewcommand{\Bar}{\operatorname{Bar}}
\DeclareMathOperator{\coh}{coh}
\DeclareMathOperator{\D}{D}
\newcommand{\e}{\mathrm{e}}
\DeclareMathOperator{\Ext}{Ext}
\DeclareMathOperator{\End}{End}
\DeclareMathOperator{\GL}{GL}
\renewcommand{\H}{\operatorname{H}}
\DeclareMathOperator{\Har}{Har}
\DeclareMathOperator{\HH}{HH}
\DeclareMathOperator{\Hom}{Hom}
\newcommand{\Irr}{\mathrm{Irr}}
\DeclareMathOperator{\Mod}{Mod}
\DeclareMathOperator{\im}{im}
\DeclareMathOperator{\Qcoh}{Qcoh}
\DeclareMathOperator{\SL}{SL}
\DeclareMathOperator{\Spec}{Spec}
\DeclareMathOperator{\Tot}{Tot}
\newcommand{\U}{\mathrm{U}}
\DeclareFontFamily{U}{mathc}{}
\DeclareFontShape{U}{mathc}{m}{it}%
{<->s*[1.01] mathc10}{}
\DeclareMathAlphabet{\mathcal}{U}{mathc}{m}{it}
\begin{document}

\title[Deformations of categories of coherent sheaves via quivers]{Deformations of categories of coherent sheaves via quivers with relations}

\author{Severin Barmeier}
\email{s.barmeier@gmail.com}
\address{Universität zu Köln, Mathematisches Institut, Weyertal 86-90, 50931 Köln, Germany \textit{and} Max-Planck-Institut für Mathematik, Vivatsgasse 7, 53111 Bonn, Germany}

\author{Zhengfang Wang}
\email{wangzg@mathematik.uni-stuttgart.de}
\address{Universität Stuttgart, Institut für Algebra und Zahlentheorie, Pfaffenwaldring 57, 70569 Stuttgart, Germany \textit{and} Max-Planck-Institut für Mathematik, Vivatsgasse 7, 53111 Bonn, Germany}

\keywords{categories of coherent sheaves, deformation theory, path algebras of quivers, L$_\infty$ algebras, noncommutative algebraic geometry}

\subjclass[2010]{18E10, 14A22, 16S80, 14B07}

\begin{abstract}
We give an explicit description of the deformation theory of the Abelian category of (quasi)coherent sheaves on any separated Noetherian scheme $X$ via the deformation theory of path algebras of quivers with relations, by using any affine open cover of $X$, or any tilting bundle on $X$, if available.

We also give sufficient criteria for obtaining algebraizations of formal deformations, in which case the deformation parameters can be evaluated to a constant and the deformations can be compared to the original Abelian category on equal terms. We give concrete examples as well as applications to the study of noncommutative deformations of singularities.
\end{abstract}

\maketitle

\tableofcontents

\section{Introduction}

A momentous result due to Gabriel \cite{gabriel} and Rosenberg \cite{rosenberg,brandenburg} shows that any quasi-separated scheme can be reconstructed from its Abelian category of quasi-coherent sheaves. Abelian categories which are ``close to'' categories of quasi-coherent sheaves on a variety or scheme can thus be viewed as generalizations of commutative schemes and are of central importance in noncommutative algebraic geometry.

In \cite{lowenvandenbergh1,lowenvandenbergh2} W.~Lowen and M.~Van den Bergh developed a deformation theory for abstract Abelian categories, controlled by a version of Hochschild cohomology $\H_{\mathrm{Ab}}^\hdot$ for Abelian categories, with first-order deformations parametrized by $\H_{\mathrm{Ab}}^2$ and obstructions lying in $\H_{\mathrm{Ab}}^3$. For a separated Noetherian scheme $X$ over an algebraically closed field $\Bbbk$ of characteristic $0$, Lowen and Van den Bergh showed that the Hochschild cohomology of the Abelian categories
\begin{itemize}
\item $\Mod (\mathcal O_X)$ of all sheaves of $\mathcal O_X$-modules
\item $\Qcoh (X)$ of quasi-coherent sheaves
\item $\coh (X)$ of coherent sheaves
\end{itemize}
are all isomorphic to the Hochschild cohomology $\HH^\hdot (X)$ of the scheme $X$.

For smooth $X$, first-order deformations are thus parametrized by
\[
\HH^2 (X) \simeq \H^0 (\Lambda^2 \mathcal T_X) \oplus \H^1 (\mathcal T_X) \oplus \H^2 (\mathcal O_X)
\]
where the isomorphism is given by the Hochschild--Kostant--Rosenberg decomposition \cite{swan,yekutieli1}. This shows that the deformation theory of $\Mod (\mathcal O_X)$, $\Qcoh (X)$ and $\coh (X)$ can be understood as an amalgamation of quantizations of algebraic Poisson structures on $X$, classical commutative deformations of $X$ and deformations of the gerbe structure of $\mathcal O_X$ (see \S\ref{section:applications}). Further aspects of this deformation theory were studied for example in \cite{benbassatblockpantev,dinhvanhermanslowen,dinhvanliulowen,liulowen}.

Lowen and Van den Bergh showed that for any associative algebra $A$ one has an equivalence of deformation theories
\begin{equation}
\label{eq}
\begin{matrix}
\text{deformations of $A$} \\
\text{as associative algebra}
\end{matrix}
\; \tikzlongleftrightarrow \;
\begin{matrix}
\text{deformations of $\Mod (A)$} \\
\text{as Abelian category}
\end{matrix}
\end{equation}
and in case $A$ is commutative $\Mod (A) \simeq \Qcoh (\Spec A)$, so that this equivalence shows that deformations of $\Qcoh (\Spec A)$ are determined by {\it associative} deformations of $A$. (Note that ``geometric'' deformations of $\Spec A$ correspond to {\it commutative} deformations of $A$, which are trivial if $\Spec A$ is smooth.)

If $X$ is now a quasi-compact separated scheme, then we have an equivalence $\Qcoh (X) \simeq \Qcoh (\mathcal O_X \vert_{\mathfrak U})$, where $\Qcoh (\mathcal O_X \vert_{\mathfrak U})$ is the Abelian category of quasi-coherent modules for the diagram of algebras $\mathcal O_X \vert_{\mathfrak U}$ obtained by restricting the structure sheaf $\mathcal O_X$ to an affine open cover $\mathfrak U$ closed under intersections. Lowen and Van den Bergh showed that in fact the full formal deformation theories of $\Mod (\mathcal O_X)$, $\Qcoh (X)$ and $\coh (X)$ are equivalent to the deformation theory of $\mathcal O_X \vert_{\mathfrak U}$ as a twisted presheaf of associative algebras, which can be viewed as a ``global'' analogue of \eqref{eq} (see Theorem \ref{theorem:equivalence}).

The deformation theory of a diagram of algebras is controlled by an L$_\infty$ algebra structure on the Gerstenhaber--Schack complex of the diagram \cite{gerstenhaberschack}, which can be viewed as a generalization of the classical Hochschild complex which controls the deformation theory of a single algebra --- for a single algebra, the Gerstenhaber--Schack complex coincides with the Hochschild complex. This L$_\infty$ algebra structure controlling formal deformations of diagrams of algebras, or more generally of prestacks, was constructed by H.\ Dinh Van and W.\ Lowen \cite{dinhvanlowen}. Their construction uses (a categorical analogue of) the observation \cite{gerstenhaberschack} that to any diagram $\mathcal A$ of algebras, one can associate a single algebra $\mathcal A!$, whose deformation theory is also equivalent to that of the diagram $\mathcal A$.

In the case of a single algebra, this L$_\infty$ algebra structure recovers the classical DG Lie algebra structure on the Hochschild complex given by the Gerstenhaber bracket. Although these higher structures on the Gerstenhaber--Schack complex give a natural obstruction calculus for deformations of diagrams of algebras, it is not clear how to {\it construct} formal deformations from this point of view. For example, already for the single algebra $A = \Bbbk [x_1, \dotsc, x_d]$, i.e.\ the algebra of global functions on affine space $\mathbb A^d$, the explicit construction of a formal deformation is nontrivial --- formal deformations of $\Qcoh (\mathbb A^d) \simeq \Mod (A)$ are given by deformation quantizations of Poisson structures on $\mathbb A^d$ which were constructed by M.~Kontsevich as part of the proof of his Formality Conjecture \cite{kontsevich1,kontsevich2} and the explicit formula uses certain integrals over configuration spaces of points which are related to multiple zeta values \cite{bankspanzerpym}.

In \cite{barmeierwang} we used the combinatorics of reduction systems to give a description of the deformation theory of arbitrary path algebras of quivers with relations and showed how this approach can be used to produce explicit combinatorial quantizations of Poisson structures by writing $\Bbbk [x_1, \dotsc, x_d] = \Bbbk \langle x_1, \dotsc, x_d \rangle / (x_j x_i - x_i x_j)_{1 \leq i < j \leq d}$ and systematically deforming the ideal of relations. In the present article we ``globalize'' this approach to describe the deformation theory of $\Qcoh (X)$ for any separated Noetherian scheme $X$.

The basic theoretical idea of \cite{barmeierwang} is to replace the bar resolution, giving rise to the Hochschild cochain complex, by a smaller, combinatorial resolution available for path algebras of (finite) quivers with relations. This resolution was constructed by M.~J.~Bardzell \cite{bardzell1,bardzell2} for monomial relations and by S.~Chouhy and A.~Solotar \cite{chouhysolotar} in the general case by using the combinatorics of reduction systems. We show in \S\ref{subsection:hypersurfaces} that for any affine hypersurface $A = \Bbbk [x_1, \dotsc, x_d] / (f)$ this resolution is isomorphic to the classical BACH resolution \cite{guccioneguccioneredondovillamayor}.

The deformation theory of path algebras of quivers with relations can then be studied via a natural L$_\infty$ algebra structure on the corresponding cochain complex of this smaller resolution, obtained by homotopy transfer from the DG Lie algebra structure on the (shifted) Hochschild cochain complex. We denote the resulting L$_\infty$ algebra by $\mathfrak p (Q, R)$, where $Q$ is the quiver and $R$ a reduction system for the ideal of relations. This point of view gives a systematic method of deforming the ideal of relations and allowing one to construct explicit formal deformations of such path algebras, as well as give sufficient criteria for the existence of an algebraization. 

In the present article we show how to use this point of view to give an explicit description of the deformation theory of the diagram algebra $\mathcal O_X \vert_{\mathfrak U}!$, where $X$ is any separated Noetherian scheme and $\mathcal O_X \vert_{\mathfrak U}$ is the restriction of the structure sheaf of $X$ to any affine open cover $\mathfrak U$ of $X$ which is closed under intersections. The approach via reduction systems allows one to deform the diagram of algebras by deforming the relations appearing in the diagram. By the above-mentioned results of \cite{lowenvandenbergh1,lowenvandenbergh2} this gives an explicit description of the deformation theory of $\Mod (\mathcal O_X)$, $\Qcoh (X)$ and $\coh (X)$. Our first main result can be phrased as follows.

\begin{theorem}[(Theorem \ref{theorem:diagramalgebrapqr})]
\label{theorem:introduction1}
Let $X$ be any separated Noetherian scheme and let $\mathfrak U$ be any affine open cover of $X$ closed under intersections. Then there is an explicit L$_\infty$ algebra $\mathfrak p (Q^{\mathrm{diag}}, R^{\mathrm{diag}})$ controlling the deformation theory of the diagram algebra $\mathcal O_X \vert_{\mathfrak U}!$ and of the Abelian categories $\Mod (\mathcal O_X)$, $\Qcoh (X)$ and $\coh (X)$.
\end{theorem}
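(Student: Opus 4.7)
The plan is to assemble three ingredients, each of which is already in the literature or developed in the preceding parts of the paper: (a) the Lowen--Van den Bergh equivalence reducing deformations of $\Mod(\mathcal O_X)$, $\Qcoh(X)$ and $\coh(X)$ to deformations of $\mathcal O_X\vert_{\mathfrak U}$ as a twisted presheaf of algebras, (b) the Gerstenhaber--Schack/Dinh Van--Lowen passage from a diagram of algebras $\mathcal A$ to a single associative algebra $\mathcal A!$ with equivalent deformation theory, and (c) the combinatorial L$_\infty$ algebra $\mathfrak p(Q,R)$ of \cite{barmeierwang} controlling associative deformations of a path algebra $\Bbbk Q / (R)_{\mathrm{id}}$ equipped with a reduction system. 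Assuming (a) and (b), it suffices to exhibit $\mathcal O_X\vert_{\mathfrak U}!$ as $\Bbbk Q^{\mathrm{diag}} / (R^{\mathrm{diag}})_{\mathrm{id}}$ for an explicit reduction-finite reduction system $R^{\mathrm{diag}}$, and then to set $\mathfrak p(Q^{\mathrm{diag}},R^{\mathrm{diag}})$ equal to the L$_\infty$ algebra produced by that machinery.

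First I would describe $Q^{\mathrm{diag}}$: its vertices are indexed by the opens $U \in \mathfrak U$; for each $U$ one chooses a presentation $\mathcal O_X(U) = \Bbbk\langle x_1^U,\dotsc,x_{n_U}^U\rangle/(f_1^U,\dotsc,f_{m_U}^U)$ coming from a reduction system $R_U$ (which exists since each $\mathcal O_X(U)$ is a finitely generated commutative $\Bbbk$-algebra and the commutativity relations together with the finite set of further relations form a reduction-finite system, as in the hypersurface discussion of \S\ref{subsection:hypersurfaces} suitably extended); one then adds, for every inclusion $V \subseteq U$ in $\mathfrak U$, a ``restriction arrow'' $e_{UV}\colon U \to V$. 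The ideal $(R^{\mathrm{diag}})_{\mathrm{id}}$ is generated by the relations $R_U$ at each vertex, together with the naturality relations $e_{UV} x_i^U - \rho_{UV}(x_i^U) e_{UV}$ expressing that the restriction maps $\rho_{UV}\colon\mathcal O_X(U) \to \mathcal O_X(V)$ are ring homomorphisms, and the cocycle relations $e_{VW} e_{UV} - e_{UW}$ (or, in the twisted presheaf setting, their twisted analogues) for chains $W \subseteq V \subseteq U$. A standard check using the diagram-algebra construction of Gerstenhaber--Schack shows that the resulting path algebra is isomorphic to $\mathcal O_X\vert_{\mathfrak U}!$.

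The second step is to verify that $R^{\mathrm{diag}}$ is in fact a reduction system in the sense of \cite{chouhysolotar,barmeierwang}, i.e.\ that it satisfies the diamond/confluence condition and that there is a well-ordering on paths making every reduction terminating. Here I would order paths by a length-lexicographic order in which restriction arrows weigh more than the ``local'' generators at each vertex, so that the reductions at each vertex $U$ are carried out first (using the given reduction systems $R_U$ on $\mathcal O_X(U)$) and the naturality and cocycle relations are then used to push restriction arrows to the right. Confluence of overlaps between local relations and naturality relations follows from the fact that $\rho_{UV}$ is a ring homomorphism and hence respects the relations $f_j^U$; confluence of overlaps among naturality and cocycle relations follows from associativity of composition of restriction maps. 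With this in hand, the Chouhy--Solotar projective bimodule resolution of $\mathcal O_X\vert_{\mathfrak U}!$ is available, and homotopy transfer from the Gerstenhaber DG Lie algebra structure on the Hochschild cochain complex produces the desired L$_\infty$ algebra $\mathfrak p(Q^{\mathrm{diag}},R^{\mathrm{diag}})$, which by construction controls formal associative deformations of $\mathcal O_X\vert_{\mathfrak U}!$.

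Finally, the two equivalences (a) and (b) are invoked to promote control of deformations of $\mathcal O_X\vert_{\mathfrak U}!$ into control of deformations of $\Mod(\mathcal O_X)$, $\Qcoh(X)$ and $\coh(X)$. The main obstacle I expect is the careful setup of step two: one has to choose the local presentations and the monomial order compatibly across all opens so that $R^{\mathrm{diag}}$ is reduction-finite and the critical overlaps involving both local relations $R_U$ and naturality/cocycle relations are resolved. For arbitrary $\mathfrak U$ this requires finiteness assumptions that the Noetherian hypothesis on $X$ and the existence of a finite affine cover closed under intersections should supply, but making these choices functorial in $\mathfrak U$ and independent of the chosen presentations (as needed to conclude that the L$_\infty$ algebra controls a well-defined deformation problem) is where the main bookkeeping will lie.
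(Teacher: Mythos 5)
Your proposal is correct and follows essentially the same route as the paper: Proposition \ref{proposition:diagram} builds the same presentation of $\mathcal O_X \vert_{\mathfrak U}!$ as $\Bbbk Q^{\mathrm{diag}}/J$ with a finite confluent reduction system (using only the arrows $f^{\mathbf i}_j$ for the minimal inclusions and the square-commutativity relations, rather than your redundant arrows $e_{UW}$ with cocycle relations $e_{VW}e_{UV}-e_{UW}$, but with the same overlap checks between local, naturality and composition relations), and Theorem \ref{theorem:diagramalgebrapqr} then concludes by combining this with Theorem \ref{theorem:BW1} and the Lowen--Van den Bergh equivalence (Theorem \ref{theorem:equivalence}) exactly as you propose. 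Your final concern about making the choices functorial or presentation-independent is not actually needed: the statement only asserts the existence of \emph{some} explicit controlling L$_\infty$ algebra, and any reduction system satisfying ($\diamond$) yields one that is L$_\infty$ quasi-isomorphic to $\mathfrak h(\mathcal O_X \vert_{\mathfrak U}!)$, so the controlled deformation problem is independent of all choices automatically.
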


This theorem relies on the construction of a quiver $Q^{\mathrm{diag}}$, whose vertices correspond to open sets in the cover $\mathfrak U$, and a finite reduction system $R^{\mathrm{diag}}$ which together encode the diagram algebra $\mathcal O_X \vert_{\mathfrak U}!$. Here, the Noetherian condition on the scheme $X$ ensures that the individual algebras $\mathcal O_X (U)$ for each $U \in \mathfrak U$ are finitely generated, so that the quiver $Q^{\mathrm{diag}}$ is finite. The reduction system $R^{\mathrm{diag}}$ is obtained by ``gluing'' finite reduction systems arising from Gröbner bases of the commutative algebras $\mathcal O_X (U)$ for $U \in \mathfrak U$. Deformations of the diagram algebra can then easily be viewed as deformations of the diagram $\mathcal O_X \vert_{\mathfrak U}$ and are thus very close to the geometry in the sense that the Abelian category of quasi-coherent modules over a deformation of $\mathcal O_X \vert_{\mathfrak U}$ can be viewed as a direct analogue of $\Qcoh (X)$.

The use of reduction systems and Gröbner bases gives a combinatorial description of the deformation theory of $\Qcoh (X)$ which is surprisingly workable. However, the number of generators and relations for the diagram algebra increase with the dimension and with the number of affine charts needed to cover $X$, so the following alternative description can also be useful.

In case $X$ admits a tilting bundle $\mathcal E$, the deformation theory of $\Mod (\mathcal O_X)$, $\Qcoh (X)$ and $\coh (X)$ can also be described more economically via the deformation theory of $\End \mathcal E$, which can also be naturally written as the path algebra of a quiver with relations, where now each vertex in the quiver corresponds to a direct summand of $\mathcal E$. The algebra $\End \mathcal E$ is always much ``smaller'' than the diagram algebra --- for example, it is finite dimensional if $X$ is projective --- but since one has to pass through a {\it derived} equivalence $\D (X) \simeq \D (\End \mathcal E)$, this point of view is not quite as close to the geometry. Our second main result is the following.

\begin{theorem}[(Theorem \ref{theorem:tiltingbundlepqr})]
\label{theorem:introduction2}
Let $X$ be any separated Noetherian scheme and let $\mathcal E$ be a tilting bundle on $X$. Write $\End \mathcal E = \Bbbk Q^{\mathrm{tilt}} / I$ and let $R^{\mathrm{tilt}}$ be any reduction system satisfying \textup{($\diamond$)} for $I$. Then $\mathfrak p (Q^{\mathrm{tilt}}, R^{\mathrm{tilt}})$ controls the deformation theory of the Abelian categories $\Mod (\mathcal O_X)$, $\Qcoh (X)$ and $\coh (X)$.
\end{theorem}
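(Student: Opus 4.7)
The plan is to reduce the theorem to the analogous statement for the single associative algebra $\End \mathcal E$, at which point it becomes a direct application of the main results of \cite{barmeierwang}. The strategy has three steps, each of which transfers the deformation theory across an equivalence, and the final L$_\infty$ description is then read off from the presentation $\End \mathcal E = \Bbbk Q^{\mathrm{tilt}}/I$ together with the reduction system $R^{\mathrm{tilt}}$.

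First, by Theorem \ref{theorem:equivalence} (the ``global'' analogue of \eqref{eq}), the three Abelian deformation theories $\Mod(\mathcal O_X)$, $\Qcoh(X)$, and $\coh(X)$ all coincide, and each is governed by the same L$_\infty$ algebra (the Gerstenhaber--Schack-type complex of Dinh Van--Lowen mentioned in the introduction). Second, the tilting bundle $\mathcal E$ provides a derived equivalence $\D(X) \simeq \D(\End \mathcal E)$. Because both sides admit canonical DG enhancements and the tilting condition $\Ext_X^i(\mathcal E, \mathcal E) = 0$ for $i > 0$ identifies the enhancement of $\D(\End \mathcal E)$ with the ordinary algebra $\End \mathcal E$ viewed as a DG algebra in degree $0$, this derived equivalence lifts to a quasi-equivalence of DG enhancements. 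Hochschild cohomology together with its L$_\infty$ (in fact $\mathrm B_\infty$) structure is invariant under such quasi-equivalences, so the L$_\infty$ algebra controlling deformations of $\coh(X)$ is L$_\infty$-quasi-isomorphic to the shifted Hochschild cochain complex of $\End \mathcal E$ with the Gerstenhaber bracket. Combining this with the single-algebra equivalence \eqref{eq} of Lowen--Van den Bergh, deformations of $\Mod(\mathcal O_X)$, $\Qcoh(X)$, and $\coh(X)$ as Abelian categories are thus equivalent to deformations of the associative algebra $\End \mathcal E$.

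Third, since $R^{\mathrm{tilt}}$ satisfies the compatibility condition ($\diamond$) for $I$, the Chouhy--Solotar resolution \cite{chouhysolotar} yields a small projective bimodule resolution of $\End \mathcal E$, and the L$_\infty$ algebra $\mathfrak p(Q^{\mathrm{tilt}}, R^{\mathrm{tilt}})$ is defined by homotopy transfer of the Gerstenhaber DG Lie structure on the Hochschild cochain complex (computed via the bar resolution) to the cochain complex computed via Chouhy--Solotar. By the main result of \cite{barmeierwang}, $\mathfrak p(Q^{\mathrm{tilt}}, R^{\mathrm{tilt}})$ is L$_\infty$-quasi-isomorphic to the Hochschild DG Lie algebra of $\End \mathcal E$ and therefore controls its deformation theory. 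Composing with the reductions of the first two steps yields the claim.

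The main obstacle is the second step: one must ensure that the tilting equivalence transports the deformation theory \emph{as an L$_\infty$ algebra}, not just as a graded vector space or even as a mere isomorphism on $\HH^2$ and $\HH^3$. This requires handling the DG (or $\mathrm A_\infty$) enhancement carefully so that the higher brackets --- which encode the full obstruction calculus for extending a first-order deformation to a formal one --- are matched on both sides. Once this comparison of enhancements is in place, the remaining ingredients (the reduction to a diagram algebra, the equivalence \eqref{eq}, and the homotopy-transfer construction of $\mathfrak p(Q, R)$) slot together routinely.
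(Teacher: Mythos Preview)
Your proposal is correct and follows essentially the same route as the paper. The paper's proof invokes Hille--Van den Bergh for the derived equivalence, then cites \cite[Thm.~6.1]{lowenvandenbergh1} directly to obtain a B$_\infty$ (hence L$_\infty$) quasi-isomorphism between $\mathrm C^\hdot(\End\mathcal E,\End\mathcal E)$ and $\mathrm C^\hdot_{\mathrm{Ab}}(\Qcoh(X))$, and finishes with Theorem~\ref{theorem:BW1}; your argument via DG enhancements and derived invariance of the Hochschild L$_\infty$ structure is a valid way to reach the same conclusion, and your detour through the Gerstenhaber--Schack complex and the single-algebra equivalence~\eqref{eq} is unnecessary but harmless.
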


Theorems \ref{theorem:introduction1} and \ref{theorem:introduction2} should generally be understood in the context of {\it formal} deformation theory. In \cite[\S 9]{barmeierwang} we gave criteria for the existence of {\it algebraizations} of formal deformations, by using the notion of admissible orders $\prec$ on the set of paths of the quiver. In particular, this allows one to construct algebraic varieties $V_\prec$ of ``actual'' deformations obtained by evaluating the deformation parameters to a constant. In view of the Gabriel--Rosenberg reconstruction theorem \cite{gabriel,rosenberg,brandenburg}, the resulting deformations of the Abelian categories or algebras can be viewed as noncommutative schemes which are ``close to'' the original scheme and can be compared to it on equal footing.

We show in examples, namely for the smooth quasi-projective surfaces $Z_k = \Tot \mathcal O_{\mathbb P^1} (-k)$, how to use this idea to obtain explicit families of actual deformations for the Abelian categories $\Qcoh (Z_k)$, which also give rise to explicit families of generally noncommutative deformations of the $\frac1k (1,1)$ surface singularities.

\subsection{Structure of the article}

In \S\ref{section:reduction} we briefly recall the main results of \cite{barmeierwang} which allow us to study deformations of path algebras of quivers with relations via the combinatorics of reduction systems. In \S\ref{section:applications} we show how to apply these techniques to study the deformation theory of $\Qcoh (X)$ and prove Theorems \ref{theorem:introduction1} and \ref{theorem:introduction2}. In \S\ref{section:totalspaces} we illustrate the deformation--obstruction theory for the category of (quasi)coherent sheaves for a particular class of smooth quasi-projective surfaces, namely $Z_k = \Tot \mathcal O_{\mathbb P^1} (-k)$. In \S\ref{section:singularities} we show how to apply the deformation theory of path algebras of quivers with relations to study noncommutative deformations of singularities, which we illustrate for the cyclic surface singularities of type $\frac{1}k (1,1)$ by applying the results for the deformation theory of $\Qcoh (Z_k)$ obtained in \S\ref{section:totalspaces}.

\section{Deformations of path algebras of quivers with relations}
\label{section:reduction}

Let $Q$ be a finite quiver, let $\Bbbk Q$ denote its path algebra and let $I \subset \Bbbk Q$ be any two-sided ideal of relations. The quotient algebra $A = \Bbbk Q / I$ is a {\it path algebra of a quiver with relations} and any finitely generated algebra is of this form. In \cite{barmeierwang} we showed how to obtain a complete description of the deformation theory of $A$ for any finite quiver $Q$ and any ideal of relations $I$ via the combinatorics of reduction systems. We will give a brief summary of the construction here and refer to \cite{barmeierwang} for details.

\subsection{Deformations via the Hochschild complex}
\label{subsection:hochschild}

Classically, the formal deformation theory of an associative algebra $A$ is controlled by the Hochschild cochain complex $\mathrm C^\hdot (A, A) = (\Hom_{\Bbbk} (A^{\otimes_{\Bbbk} \hdot}, A), d)$ equipped with the Gerstenhaber bracket $[\blank {,} \blank]$.

We have a DG Lie algebra $\mathfrak h (A) = (\mathrm C^{\hdot + 1} (A, A), d, [\blank {,} \blank])$ whose Maurer--Cartan elements are precisely associative multiplications $A \otimes_{\Bbbk} A \tikzto A$. This DG Lie algebra naturally controls formal deformations of $A$.

Note that $\Hom_{\Bbbk} (A^{\otimes_{\Bbbk} n}, A) \simeq \Hom_{A^\e} (\Bar_n (A), A)$, where $\Bar_\ldot (A)$ is the bar resolution
\[
\dotsb \tikzto A \otimes_{\Bbbk} A^{\otimes_\Bbbk n} \otimes_{\Bbbk} A \tikzto \dotsb \tikzto A \otimes_{\Bbbk} A \otimes_{\Bbbk} A \tikzto A \otimes_{\Bbbk} A 
\]
which is a projective resolution of $A$ viewed as a bimodule over itself.

\subsection{Deformations via reduction systems}

If $A \simeq \Bbbk Q / I$ for some finite quiver $Q$ and some two-sided ideal $I \subset \Bbbk Q$, S.\ Chouhy and A.\ Solotar constructed a smaller $A$-bimodule resolution of $A$ via the combinatorics of reduction systems \cite{chouhysolotar}. This projective resolution can be used instead of the bar resolution to describe the full deformation theory of $A$. We briefly recall the basic notions and refer to \cite{barmeierwang} for more details.

\subsubsection{Reduction systems}

The following notion of a reduction system was introduced by G.~M.\ Bergman \cite{bergman} in the statement and proof of his Diamond Lemma.

\begin{definition}[{(Bergman \cite[\S 1]{bergman})}]
A {\it reduction system} $R$ for $\Bbbk Q$ is a set of pairs
\[
R = \{ (s, \varphi_s) \mid s \in S \text{ and } \varphi_s \in \Bbbk Q \}
\]
where
\begin{itemize}
\item $S$ is a subset of $Q_{\geq 2}$ such that $s$ is not a subpath of $s'$ when $s \neq s' \in S$
\item for all $s \in S$, $s$ and $\varphi_s$ are parallel
\item for each $(s, \varphi_s) \in R$, $\varphi_s$ is irreducible, i.e.\ it is a linear combination of irreducible paths.
\end{itemize}
Here a path is {\it irreducible} if it does not contain elements in $S$ as a subpath and we denote by $\Irr_S = \Irr_S (Q) = Q_\ldot \setminus Q_\ldot S \hair Q_\ldot$ the set of all irreducible paths.

Given a two-sided ideal $I$ of $\Bbbk Q$, we say that a reduction system $R$ {\it satisfies the condition \textup{($\diamond$)} for $I$} if
\begin{enumerate}
\item $I$ is equal to the two-sided ideal generated by the set $\{ s - \varphi_s \}_{(s, \varphi_s) \in R}$
\item every path is reduction-finite and reduction-unique, i.e.\ every path can be ``reduced'' by repeatedly replacing subpaths $s \in S$ by $\phi_s$ to a unique element in $\Bbbk \Irr_S$.
\end{enumerate}
\end{definition}

As a consequence of the Diamond Lemma \cite[Thm.~1.2]{bergman} it follows that if $R$ satisfies $(\diamond)$ for an ideal $I \subset \Bbbk Q$, then the set $\Irr_S$ of irreducible paths forms a $\Bbbk$-basis of the quotient algebra $A = \Bbbk Q / I$. We write
\[
\pi \colon \Bbbk Q \tikzto A = \Bbbk Q / I
\]
for the projection.

In particular, the deformation theory of $A = \Bbbk Q / I$ via a reduction system $R$ described below in \S\ref{subsubsection:linfinity} can be understood as deforming the algebra in the basis $\Irr_S$ determined by $R$.

\begin{remark}
\label{remark:f}
A reduction system $R = \{ (s, \varphi_s) \}$ is uniquely determined by the set $S \subset Q_{\geq 2}$ together with $\varphi \in \Hom (\Bbbk S, \Bbbk \Irr_S)$ so that $\phi (s) = \phi_s$ for each $s \in S$. (Here $\Hom$ denotes the set of $\Bbbk Q_0$-bimodule homomorphisms.)

Note that the Diamond Lemma implies in particular that we may identify $A \simeq \Bbbk \Irr_S$ and thus $\Hom (\Bbbk S, \Bbbk \Irr_S) \simeq \Hom (\Bbbk S, A)$.
\end{remark}

\begin{remark}
\label{remark:finite}
Reduction systems ``always exist''. More precisely, we have the following two facts:
\begin{enumerate}
\item For any finite quiver $Q$ and any two-sided ideal $I \subset \Bbbk Q$, there exists a reduction system $R$ satisfying $(\diamond)$ for $I$ (see \cite[Prop.~2.7]{chouhysolotar}). However, if $\Bbbk Q / I$ is noncommutative, it is in general undecidable whether there exists a {\it finite} reduction system, even if $I$ is finitely generated.
\item Any Gröbner basis for $I$ gives rise to a reduction system $R = \{ (s, \phi_s) \}$ (see e.g.\ \cite[\S 3.3.1]{barmeierwang}). In particular, if $\Bbbk Q / I$ is commutative, then $I$ always admits a finite Gröbner basis \cite{eisenbudpeevasturmfels}, which gives rise to a finite reduction system.
\end{enumerate}
\end{remark}

\subsubsection{Higher ambiguities}

We now recall the definition of $n$-ambiguities which will be used to construct a projective $A$-bimodule resolution of $A$.

\begin{definition}
Let $p \in Q_{\geq 0}$ be a path. If $p = qr$ for some paths $q, r$ we call $q$ a {\it proper left subpath} of $p$ if $p \neq q$. 

Now let $n \geq 0$. A path $p \in Q_\ldot$ is an {\it $n$-ambiguity} if there exist $u_0 \in Q_1$ and irreducible paths $u_1, \dotsc, u_{n+1}$ such that
\begin{enumerate}
\item $p = u_0 \dotsb u_{n+1}$
\item for all $i$, $u_i u_{i+1}$ is reducible, and $u_i d$ is irreducible for any proper left subpath $d$ of $u_{i+1}$.
\end{enumerate}
\end{definition}

Now let $S_0 = Q_0$, $S_1 = Q_1$, $S_2 = S$ and let $S_{n+2}$ for $n \geq 1$ denote the set of $n$-ambiguities. Generalizing Bardzell's resolution for monomial algebras \cite{bardzell1,bardzell2}, Chouhy--Solotar \cite{chouhysolotar} constructed a smaller $A$-bimodule resolution $P_\ldot$ of $A = \Bbbk Q/I$ associated to any reduction system $R = \{(s, \phi_s) \mid  s \in S\}$ satisfying ($\diamond$) for $I$
\begin{equation}
\label{resolution}
\dotsb \toarglong{\partial_{n+1}} P_n \toarglong{\partial_n} P_{n-1} \toarglong{\partial_{n-1}} \dotsb \toarglong{\partial_2} P_1 \toarglong{\partial_1} P_0 
\end{equation}
where $P_n = A \otimes \Bbbk S_n \otimes A$ and the augmentation map $\partial_0 \colon A \otimes A \tikzto A$ is given by the multiplication of $A$. A recursive formula for the differential is given in \cite[Thm.~4.2]{barmeierwang}.

\subsubsection{An explicit L$_\infty$ algebra}
\label{subsubsection:linfinity}

Using this smaller resolution, we obtained the following result. 

\begin{theorem}[{\cite[\S 7]{barmeierwang}}]
\label{theorem:BW1}
Let $A = \Bbbk Q / I$. Then there exists an L$_\infty$ algebra $\mathfrak p (Q, R)$
with underlying cochain complex $P^{\hdot+1} = \Hom_{A^\e} (P_{\ldot+1}, A)$ and an L$_\infty$ quasi-isomorphism between $\mathfrak p(Q, R)$ and the DG Lie algebra $\mathfrak h (A) = (\Hom_{A^\e} (\Bar_{\ldot+1} (A), A), d, [\blank {,} \blank])$.

As a consequence, the L$_\infty$ algebra $\mathfrak p (Q, R)$ controls the deformation theory of $A$.
\end{theorem}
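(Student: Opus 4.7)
The plan is to apply the homotopy transfer theorem for L$_\infty$ algebras to transport the Gerstenhaber DG Lie structure on $\mathfrak h (A)$ to an L$_\infty$ structure on the smaller cochain complex $P^{\hdot+1}$, and to read off the claimed L$_\infty$ quasi-isomorphism directly from the transfer procedure. Since $P_\ldot$ and $\Bar_\ldot (A)$ are both projective $A$-bimodule resolutions of $A$, they are chain homotopy equivalent; what is needed is to make this equivalence sufficiently explicit, in the form of a strong deformation retract, so that the tree-summation formulae of homotopy transfer can be applied and yield combinatorial brackets on $\mathfrak p (Q, R)$.

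The main step is to construct $A$-bimodule chain maps $\iota_\ldot \colon P_\ldot \to \Bar_\ldot (A)$ and $\pi_\ldot \colon \Bar_\ldot (A) \to P_\ldot$ lifting the identity on $A$, together with a bimodule homotopy $h$ on $\Bar_\ldot (A)$ such that $\pi \iota = \id$, $\iota \pi - \id = dh + hd$, and the side conditions $h^2 = 0$, $h \iota = 0$, $\pi h = 0$ are satisfied. The point is that condition $(\diamond)$ makes these data essentially canonical: the projection $\pi$ encodes the reduction rules $(s, \phi_s)$, sending a bar chain to its reduction-unique normal form in $\Bbbk \Irr_S \simeq A$; the inclusion $\iota$ sends a generator $s \in S_n$ (for $n \geq 2$ an $n$-ambiguity $u_0 u_1 \cdots u_{n+1}$) to an explicit sum of bar elements compatible with the recursive differential on $P_\ldot$ from \cite[Thm.~4.2]{barmeierwang}; and $h$ records the successive intermediate states of the reduction procedure. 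Dualising by $\Hom_{A^\e} (\blank, A)$ then converts the resulting strong deformation retract into a contraction of the underlying complex of $\mathfrak h (A)$ onto $P^{\hdot+1}$.

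With the contraction in place, the Kontsevich--Soibelman homotopy transfer formula --- a sum over rooted binary trees with leaves decorated by $\iota^*$, internal edges by $h^*$, the root by $\pi^*$ and internal vertices by the Gerstenhaber bracket $[\blank, \blank]$ --- produces the L$_\infty$ structure $\mathfrak p (Q, R)$ on $P^{\hdot+1}$, together with an L$_\infty$ quasi-isomorphism $\mathfrak p (Q, R) \to \mathfrak h (A)$ whose linear part is the transferred cochain inclusion. The deformation-theoretic consequence is then standard: an L$_\infty$ quasi-isomorphism between L$_\infty$ algebras in characteristic $0$ induces an equivalence of formal Maurer--Cartan / deformation functors (after tensoring with the maximal ideal of any Artinian local base) by the theorems of Goldman--Millson, Hinich and Getzler, and since $\mathfrak h (A)$ controls the formal deformations of $A$ as an associative algebra, so does $\mathfrak p (Q, R)$.

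The main obstacle is the first step: writing down explicit formulas for $\iota$, $\pi$ and $h$ and verifying compatibility with the differentials as well as the side conditions of a strong deformation retract. Because the differential on $P_\ldot$ is given only recursively, compatibility with $\iota$ and $\pi$ is most naturally established by induction along the same recursion; the side conditions on $h$ can be arranged a posteriori by the standard replacement $h \mapsto (\id - \iota \pi) h (\id - \iota \pi)$ once any chain homotopy has been produced.
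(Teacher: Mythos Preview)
Your proposal is correct and matches the approach the paper indicates. Note that the paper does not actually give a proof of this theorem --- it is quoted from \cite[\S 7]{barmeierwang} --- but the surrounding text (see the introduction and the paragraph preceding the theorem in \S\ref{subsubsection:linfinity}) explicitly states that the L$_\infty$ structure on $\mathfrak p(Q,R)$ is ``obtained by homotopy transfer from the DG Lie algebra structure on the (shifted) Hochschild cochain complex'', which is precisely your strategy: build a strong deformation retract between $P_\ldot$ and $\Bar_\ldot(A)$ using the combinatorics of the reduction system, dualise, and apply the tree-summation transfer formulae.
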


For any complete local Noetherian $\Bbbk$-algebra $(B, \mathfrak m)$ --- the base of the deformation, e.g.\ $B = \Bbbk \llrr{t}$ and $\mathfrak m = (t)$ --- and any element $\widetilde \phi \in P^2 \hatotimes \mathfrak m$, we may associate a certain combinatorially defined operation $\star_{\phi + \widetilde \phi} \colon A \otimes A \tikzto A \hatotimes B$, which we call {\it combinatorial star product} (see \cite[Def.~7.18]{barmeierwang}). This operation can be described by performing rightmost reductions with respect to the new formal reduction system
\[
\widehat R_{\phi + \widetilde \phi} = \{ (s, \phi_s + \widetilde \phi_s) \mid s \in S\}
\]
where $\widetilde \phi_s \in \Bbbk \Irr_S \hatotimes \mathfrak m $ is the image of $\widetilde \phi(s) $ under the natural isomorphism $\Bbbk \Irr_S \simeq A$.

This can be understood as follows. Given any two irreducible paths $u, v \in \Irr_S$, the concatenation $u v$ is not necessarily irreducible. The multiplication in $A$ is given by repeatedly replacing any subpath $s$ of $uv$ with $s \in S$ by $\phi (s)$. Since the reduction system $R$ is assumed to be reduction-unique, this process finishes after finitely many steps and does not depend on the choice of subpath at each step. The operation $\star_{\phi + \widetilde \phi}$ is defined similarly with the only difference being that one starts with $u v$ and repeatedly replaces the rightmost subpath $s \in S$ by $(\phi + \widetilde \phi) (s)$. (See \cite[\S10]{barmeierwang} and Remark \ref{remark:graphical} for a graphical description of $\star_{\phi+\widetilde\phi}$ for the polynomial algebra $A = \Bbbk [x_1, \dotsc, x_d]$.)

This combinatorial operation $\star_{\phi + \widetilde \phi}$ can be used to give the following necessary and sufficient condition for $\widetilde \phi$ to be a Maurer--Cartan element.

\begin{theorem}[{\cite[Thm.~7.37]{barmeierwang}}]
\label{theorem:higher-brackets}
Let $\widetilde \varphi \in \Hom (\Bbbk S, A) \hatotimes \mathfrak m$ and write $\star = \star_{\phi + \widetilde \phi}$. Then $\widetilde \varphi$ satisfies the Maurer--Cartan equation of $\mathfrak p (Q, R) \hatotimes \mathfrak m$ if and only if for any $u v w \in S_3$ with $u v, v w \in S$, we have
\begin{align}
\label{mc}
\pi (u) \star (\pi (v) \star \pi (w)) = (\pi (u) \star \pi (v)) \star \pi (w).
\end{align}
\end{theorem}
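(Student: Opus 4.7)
The plan is to interpret the Maurer--Cartan equation for $\mathfrak p (Q, R) \hatotimes \mathfrak m$ as associativity of the combinatorial star product, and then to reduce full associativity to associativity on the overlap triples \eqref{mc} by a deformed Diamond Lemma argument.

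I would first use Theorem \ref{theorem:BW1}: the L$_\infty$ quasi-isomorphism induces a bijection on Maurer--Cartan sets after tensoring with $\mathfrak m$, and classical deformation theory identifies Maurer--Cartan elements of $\mathfrak h (A) \hatotimes \mathfrak m$ with associative $B$-bilinear deformations of the multiplication on $A \hatotimes B$. To identify the transferred deformation with $\star_{\phi + \widetilde \phi}$, I would inspect the explicit homotopy retract between $\Bar_\ldot (A)$ and the Chouhy--Solotar resolution $P_\ldot$ used to construct $\mathfrak p (Q, R)$ in \cite[\S 7]{barmeierwang}: the contraction homotopy encodes right-most reduction with respect to $\widehat R_{\phi + \widetilde \phi}$, so the transferred associative product on $A \hatotimes B$ is exactly $\star_{\phi + \widetilde \phi}$. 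Consequently, $\widetilde \phi$ is Maurer--Cartan if and only if $\star = \star_{\phi + \widetilde \phi}$ is associative on the whole of $A \hatotimes B$.

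It then remains to show that associativity of $\star$ on arbitrary triples of irreducible paths is equivalent to \eqref{mc} for all $uvw \in S_3$ with $uv, vw \in S$. One direction is immediate. For the converse, I would argue by induction on the $\mathfrak m$-adic filtration. Modulo $\mathfrak m$ both sides of \eqref{mc} reduce to the undeformed product in $A$, which is associative by Bergman's classical Diamond Lemma. Assuming associativity modulo $\mathfrak m^{n+1}$, the discrepancy at the next order defines a $\Bbbk Q_0$-bilinear map $\mu \colon A^{\otimes 3} \to A \hatotimes (\mathfrak m^{n+1}/\mathfrak m^{n+2})$. Since $\star$ is defined by right-most reductions, a direct inspection shows that $\mu$ evaluated on an irreducible triple $(u, v, w)$ can only be nonzero if both $uv$ and $vw$ trigger at least one rewrite, forcing the existence of a 3-ambiguity structure inside $uvw$; and the recursive differential on $P_\ldot$ from \cite[Thm.~4.2]{barmeierwang} expresses the value of $\mu$ on any such triple as a combination of its values on the minimal overlaps $uvw \in S_3$ with $uv, vw \in S$. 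Vanishing on the latter, which is the hypothesis, therefore forces the inductive step.

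The main obstacle will be justifying this propagation step precisely: one has to show that the discrepancy $\mu$ is controlled by its values on the minimal 3-ambiguities, via the higher ambiguities in $S_n$ for $n \geq 3$. At order zero this is exactly Bergman's classical confluence argument, which reduces global associativity of the undeformed product to associativity on overlap ambiguities; lifting this to the formal setting requires tracking how the L$_\infty$ brackets of $\mathfrak p (Q, R)$ package the reduction graph order by order in $\mathfrak m$, so that the Maurer--Cartan equation becomes the confluence condition at each order. This is the technical heart of the theorem and the reason why the proof ultimately rests on the explicit form of the recursive differential of the Chouhy--Solotar resolution $P_\ldot$.
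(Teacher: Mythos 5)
This theorem is not proved in the present paper: it is quoted from \cite[Thm.~7.37]{barmeierwang}, and the argument there is organized quite differently from what you propose. Since the degree-$2$ component of the complex underlying $\mathfrak p(Q,R)$ is $\Hom(\Bbbk S_3, A)$, the Maurer--Cartan equation is \emph{by definition} a family of equations indexed by the $1$-ambiguities $uvw \in S_3$, and the proof consists of a direct computation --- using the tree formulas for the transferred brackets and the fact that the contracting homotopy implements right-most reductions --- showing that the $uvw$-component of the curvature $\sum_{n\geq 1}\tfrac{1}{n!}\ell_n(\widetilde\phi,\dots,\widetilde\phi)$ is exactly the associator $(\pi(u)\star\pi(v))\star\pi(w) - \pi(u)\star(\pi(v)\star\pi(w))$. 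The equivalence is then immediate; global associativity of $\star$ never enters as an intermediate step, but is instead deduced afterwards (via the L$_\infty$ morphism to $\mathfrak h(A)$) as a \emph{consequence} of the Maurer--Cartan equation.

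Your route --- Maurer--Cartan $\Leftrightarrow$ global associativity $\Leftrightarrow$ associativity on overlaps --- has two genuine gaps. First, an L$_\infty$ quasi-isomorphism induces a bijection on Maurer--Cartan elements only up to gauge equivalence, not on the Maurer--Cartan sets themselves; so from ``$\star$ is associative'', i.e.\ the pushforward of $\widetilde\phi$ is Maurer--Cartan in $\mathfrak h(A)\hatotimes\mathfrak m$, you cannot conclude as stated that $\widetilde\phi$ is Maurer--Cartan in $\mathfrak p(Q,R)\hatotimes\mathfrak m$. This is repairable --- homotopy transfer along a contraction yields L$_\infty$ morphisms in both directions whose composite on the small side is the identity, and applying the projection morphism to the Maurer--Cartan element $\star-\mu_A$ recovers $\widetilde\phi$ --- but that is an additional argument you would have to supply. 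Second, and more seriously, the implication ``associativity on the overlaps in $S_3$ implies global associativity'' for an arbitrary $\widetilde\phi$ not yet known to be Maurer--Cartan is a deformed Diamond Lemma, and your propagation step is not a proof of it: the recursive differential of $P_\ldot$ relates $\Bbbk S_{n+1}$ to $\Bbbk S_n$, but it does not by itself express the order-$(n+1)$ associator defect on an arbitrary irreducible triple as a combination of its values on minimal overlaps. What is needed is a genuine confluence (Newman-lemma) argument carried out order by order in $\mathfrak m$, with careful bookkeeping of the right-most reduction convention --- you correctly identify this as the technical heart, but deferring it leaves the converse direction unproved, and filling it in is at least as much work as the bracket computation it is meant to replace.
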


This theorem shows that to check whether $\widetilde \varphi$ is a Maurer--Cartan element, it suffices to check whether $\star = \star_{\phi + \widetilde \phi}$ is associative on elements in $S_3$. Moreover, when $\widetilde \phi$ is a Maurer--Cartan element of $\mathfrak p (Q, R) \hatotimes \mathfrak m$, then $\star$ gives an explicit formula for the corresponding formal deformation of $A$. It follows from \cite[Cor.\ 7.31]{barmeierwang} that up to gauge equivalence any formal deformation of $A$ over $(B, \mathfrak m)$ is of the form $(A \hatotimes B, \star)$ for some Maurer--Cartan element $\widetilde \phi$.
 
\begin{remark}
\label{remark:computinghh2}
For $B = \Bbbk [t]/(t^2)$ Theorem \ref{theorem:higher-brackets} provides a combinatorial way to compute $\HH^2(A)$. Let $\widetilde \phi \in \Hom (\Bbbk S, A)$. Then $\widetilde \phi$ is a $2$-cocycle if and only if for any $u v w \in S_3$ with $u v, v w \in S$ we have
\[
(\pi (u) \star \pi(v)) \star \pi(w) = \pi (u) \star (\pi (v) \star \pi(w)) \; \mathrm{mod} \ t^2
\]
where $\star = \star_{\phi + \widetilde \phi t}$. See \cite[\S7.A]{barmeierwang} for more details. 
\end{remark}

\subsubsection{Algebraization and actual deformations}

Let $\widehat A = (A \llrr{t}, \star)$ be a formal one-para\-meter deformation of an associative algebra $A$. An {\it algebraization} of $\widehat A$ is a $C$-algebra $\widetilde A = (A \otimes C, \star')$, where $C$ is the coordinate ring of an affine curve, together with a smooth closed point of $\Spec C$ corresponding to a maximal ideal $\mathfrak m$ of $C$ such that $\widehat A$ is isomorphic to the $(A \otimes \mathfrak m)$-adic completion of $\widetilde A$ as formal deformation. 

When $A$ is infinite-dimensional over $\Bbbk$, an algebraization does not always exist. However, when it does, one can consider the formal deformation parameter $t$ as an actual parameter. For example, if there exists an algebraization with $C = \Bbbk [t]$ the coordinate ring of an affine line, the formal parameter $t$ can be evaluated to any constant $\lambda \in \Bbbk$, giving an ``actual'' deformation $\widetilde A_\lambda = \widetilde A / (t - \lambda)$ having the same $\Bbbk$-basis as $A$, but usually different geometric or representation-theoretic properties.

In \cite[\S 9]{barmeierwang} we use the notion of admissible orders on the set of paths for the quiver $Q$ to give criteria for the existence of algebraizations of formal deformations. More precisely, let $A = \Bbbk Q / I$ and let $R = \{ (s, \phi_s) \mid s \in S \}$ be a reduction system satisfying ($\diamond$) for $I$ obtained from a noncommutative Gröbner basis with respect to some admissible order $\prec$ on $Q_\ldot$. 

Now consider the subspace $\Hom (\Bbbk S, A)_\prec \subset \Hom (\Bbbk S, A)$ consisting of elements $\widetilde \varphi$ which satisfy the following degree condition:
\begin{flalign}
\label{degreecondition}
\tag{$\prec$}
&& \widetilde \varphi(s)  \in \Bbbk Q_{\prec s} && \mathllap{\text{for any $s \in S$}}
\end{flalign}
where $\Bbbk Q_{\prec s}$ is the $\Bbbk$-linear span of all paths which are ``smaller'' than $s$ with respect to the order $\prec$. Here we use the identification $\Hom(\Bbbk S, A) \simeq \Hom(\Bbbk S, \Bbbk \Irr_S)$ and thus view $\widetilde \phi(s) \in \Bbbk \Irr_S \cap \Bbbk Q_{\prec s}$ in \eqref{degreecondition}. Note that we have $\phi \in \Hom (\Bbbk S, A)_\prec$ since by definition $\phi(s)=\phi_s \prec s$ for any $s \in S$.

\begin{proposition}[{\cite[Prop.~9.17]{barmeierwang}}]
\label{proposition:Groebner}
Let $\widetilde \varphi \in \Hom (\Bbbk S, A)_\prec \otimes (t)$.
\begin{enumerate}
\item \label{lemgr1} For any $a, b \in A$ we have that $a \star_{\phi + \widetilde \varphi} b$ is a finite sum.
\item \label{lemgr2} If $\widetilde \phi$ satisfies the Maurer--Cartan equation of $\mathfrak p (Q, R) \hatotimes (t)$, then the formal deformation $(A \llrr{t}, \star_{\phi + \widetilde \varphi})$ admits $(A [t], \star_{\phi + \widetilde \phi})$ as an algebraization.
\end{enumerate}
\end{proposition}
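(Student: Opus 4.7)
The plan is to exploit the combination of the admissible order $\prec$ and the degree condition \eqref{degreecondition} to show that the right-most reduction procedure defining $\star := \star_{\phi + \widetilde \varphi}$ terminates after finitely many steps on any pair of inputs, so that the output lies in $A[t]$ rather than merely in $A \llrr{t}$. Once part \eqref{lemgr1} is established, associativity in the algebraic setting is inherited directly from the Maurer--Cartan condition via Theorem \ref{theorem:higher-brackets}, and the identification of the $(t)$-adic completion with the given formal deformation is essentially tautological.

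For part \eqref{lemgr1} I would fix $u, v \in \Irr_S$ and analyse the reduction tree of the concatenation $u \cdot v$ under the formal reduction system $\widehat R_{\phi + \widetilde \varphi}$. At each node of the tree one replaces a monomial $\alpha s \beta$, with $s \in S$ the right-most reducible subpath, by $\alpha (\phi_s + \widetilde \varphi_s) \beta$. Since $\widetilde \varphi \in \Hom (\Bbbk S, A)_\prec \otimes (t)$ is a \emph{polynomial} in $t$ whose coefficients $\widetilde \varphi^{(n)}(s) \in \Bbbk Q_{\prec s}$, and since $\phi_s \prec s$ by construction of the Gröbner basis, the replacement $\phi_s + \widetilde \varphi_s$ is a finite $\Bbbk[t]$-linear combination of paths each strictly $\prec s$. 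Admissibility of $\prec$ with respect to concatenation then forces each new monomial $\alpha p \beta$ produced at this step to satisfy $\alpha p \beta \prec \alpha s \beta$. Since $\prec$ is a well-ordering on $Q_\ldot$, Noetherian induction on $\prec$ ensures that every branch of the reduction tree terminates, and the finiteness of the branching at each node implies that the whole tree is finite. Hence $u \star v$ is a finite $\Bbbk[t]$-linear combination of elements of $\Irr_S$ and so lies in $A[t]$.

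For part \eqref{lemgr2}, part \eqref{lemgr1} already yields a $\Bbbk[t]$-bilinear operation $\star \colon A[t] \otimes_{\Bbbk[t]} A[t] \to A[t]$ whose reduction modulo $(t)$ recovers the multiplication of $A$. Associativity of $\star$ on $A[t]$ reduces by Theorem \ref{theorem:higher-brackets} to checking the identity \eqref{mc} on all triples $u v w \in S_3$ with $uv, vw \in S$; since $\widetilde \varphi$ is a Maurer--Cartan element of $\mathfrak p(Q, R) \hatotimes (t)$, that identity holds in $A \hatotimes \Bbbk \llrr{t}$, and both sides are polynomials in $t$ by part \eqref{lemgr1}, so it already holds in $A[t]$. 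Thus $\widetilde A := (A[t], \star)$ is an associative $\Bbbk[t]$-algebra, free as a $\Bbbk[t]$-module on the basis $\Irr_S$, and its $(t)$-adic completion is manifestly $(A \llrr{t}, \star_{\phi + \widetilde \varphi})$, giving the desired algebraization with $C = \Bbbk[t]$ and $\mathfrak m = (t)$.

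The principal technical obstacle is the termination argument underlying part \eqref{lemgr1}. It depends essentially on both the degree condition $(\prec)$ \emph{and} the admissibility of $\prec$ under concatenation; without $(\prec)$ a reduction step could introduce paths $\succeq s$, permitting the $t$-adic expansion of $u \star v$ to have genuinely infinite support, while without admissibility the strict decrease $\alpha p \beta \prec \alpha s \beta$ required to run the Noetherian induction would fail. It is exactly the interplay of these two hypotheses that converts the a priori formal reduction process into a finitary algebraic one and makes the algebraization possible.
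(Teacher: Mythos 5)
Your argument is correct and is essentially the intended one: the paper itself only cites \cite[Prop.~9.17]{barmeierwang} for this statement, and the proof there rests on exactly the termination argument you give --- the degree condition \eqref{degreecondition} together with the compatibility of the admissible order $\prec$ with concatenation makes every right-most reduction strictly decrease the path component of each monomial, so well-foundedness of $\prec$ plus finite branching forces the reduction process to terminate and lands $u \star v$ in $A[t]$. Part \itemii{} then follows as you say, since $A[t]$ is closed under $\star$ by part \itemi{} and associativity is inherited from the Maurer--Cartan condition on the formal completion; no gaps.
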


In particular, in \cite{barmeierwang} it was shown that when $R$ is a finite reduction system, the set $V_{\prec}$ of Maurer--Cartan elements in $\Hom(\Bbbk S, A)_\prec$ of $\mathfrak p(Q, R)$ admits the structure of an affine algebraic variety of algebras previously studied by E.\ Green, L.\ Hille and S.\ Schroll \cite{greenhilleschroll}. This variety has the following properties.

\begin{theorem}[{\cite[Thm.~2.10]{barmeierwang}}]
\label{theorem:variety}
Let $\dim \Hom (\Bbbk S, A)_\prec = N < \infty$.
\begin{enumerate}
\item \label{variety2} There is a natural groupoid action $G_\prec \tikzrightrightarrows V_\prec$ corresponding to equivalence of reduction systems, such that two Maurer--Cartan elements in $V_\prec$ are equivalent if and only if they lie in the same orbit of $G_\prec \tikzrightrightarrows V_\prec$.
\item The Zariski tangent space of $V_\prec$ at the point $\widetilde \phi \in V_\prec$ is isomorphic to the space of $2$-cocycles in $\Hom (\Bbbk S, A_{\phi + \widetilde \phi})_\prec$. In particular, we have a \emph{Kodaira--Spencer map} $\mathrm{KS} \colon \mathrm T_{\widetilde \phi} V_\prec \tikzto \HH^2 (A_{\phi + \widetilde \phi}, A_{\phi + \widetilde \phi})$.
\item The Zariski tangent space to the orbit of the groupoid $G_\prec \tikzrightrightarrows V_\prec$ at $\widetilde \phi$ is contained in the subspace of the $2$-co\-bound\-aries lying in $\Hom (\Bbbk S, A_{\phi + \widetilde \phi})_\prec$.
\end{enumerate}
\end{theorem}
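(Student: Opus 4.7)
The plan is to use Theorem \ref{theorem:higher-brackets} together with Proposition \ref{proposition:Groebner} to cut out $V_\prec$ by explicit polynomial equations, then construct the groupoid action and analyze tangent spaces by passing to $\Bbbk [\epsilon] / (\epsilon^2)$. First I would note that for $\widetilde \phi \in \Hom (\Bbbk S, A)_\prec$, Proposition \ref{proposition:Groebner}\,\itemi guarantees that $\star_{\phi + \widetilde \phi}$ is a finite sum, so the associativity condition in Theorem \ref{theorem:higher-brackets}, checked on the finite set $S_3$ of overlaps $uvw$ with $uv,vw\in S$, translates into finitely many polynomial equations in the coordinates of $\widetilde \phi$. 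This realizes $V_\prec$ as a closed affine subscheme of the finite-dimensional space $\Hom (\Bbbk S, A)_\prec$, matching the description of Green--Hille--Schroll. For part \itemi, I would let $G_\prec$ be the groupoid acting on $V_\prec$ whose arrows $\widetilde \phi \to \widetilde \phi'$ are $\Bbbk Q_0$-bimodule automorphisms $\sigma = \id + (\text{terms strictly }\prec)$ of $A$ that intertwine $\star_{\phi + \widetilde \phi}$ and $\star_{\phi + \widetilde \phi'}$. Combined with Theorem \ref{theorem:BW1}, two Maurer--Cartan elements in $V_\prec$ give isomorphic formal deformations if and only if they are $L_\infty$ gauge equivalent, and I would show that within $V_\prec$ this coincides with being related by an arrow of $G_\prec$.

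For parts \itemii and \itemiii, I would pass to $B = \Bbbk [\epsilon] / (\epsilon^2)$ with $\mathfrak m = (\epsilon)$. A tangent vector to $V_\prec$ at $\widetilde \phi$ is by definition an element $\psi \in \Hom (\Bbbk S, A)_\prec$ such that $\widetilde \phi + \epsilon \psi$ satisfies the Maurer--Cartan equation of $\mathfrak p (Q, R) \hatotimes (\epsilon)$. Applying Remark \ref{remark:computinghh2} with $A$ replaced by the deformed algebra $A_{\phi + \widetilde \phi}$, this is precisely the condition that $\psi$ is a Hochschild $2$-cocycle of $A_{\phi + \widetilde \phi}$ lying in $\Hom (\Bbbk S, A_{\phi + \widetilde \phi})_\prec$. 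Part \itemii then follows by sending $\psi$ to its cohomology class to define the Kodaira--Spencer map. For part \itemiii, tangent vectors to the $G_\prec$-orbit at $\widetilde \phi$ arise by differentiating one-parameter families $\sigma_\epsilon = \id + \epsilon \tau$ with $\tau \in \Hom (\Bbbk S, A)_\prec$; the infinitesimal action in the $L_\infty$ setting is the Gerstenhaber differential (bracket with the multiplication of $A_{\phi + \widetilde \phi}$), so the resulting tangent vector is a $2$-coboundary, and the $\prec$-condition on $\tau$ confines this coboundary to the claimed subspace.

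The hardest point I anticipate is completing part \itemi, namely showing that every $L_\infty$-gauge equivalence between Maurer--Cartan elements of $V_\prec$ can be implemented by an arrow of $G_\prec$ preserving the admissible order. This requires an inductive argument exploiting that $\phi_s \prec s$ for every $s \in S$, so that a candidate gauge transformation producing a $\prec$-strict perturbation can be refined at each step without introducing higher-order terms. The converse direction---that every arrow in $G_\prec$ produces an $L_\infty$-gauge equivalence---should be more routine via the identification between reduction systems and bimodule data recorded in Remark \ref{remark:f}.
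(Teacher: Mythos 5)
First, a caveat: this paper does not prove Theorem \ref{theorem:variety} at all --- it is quoted verbatim from \cite[Thm.~2.10]{barmeierwang} --- so there is no internal proof to compare against, and your proposal has to be judged on its own terms. The overall strategy is the natural one and, for parts \itemii{} and \itemiii, is essentially what the cited reference does: Proposition \ref{proposition:Groebner}\,\itemi{} makes $\star_{\phi+\widetilde\phi}$ polynomial in the coordinates of $\widetilde\phi$, Theorem \ref{theorem:higher-brackets} cuts $V_\prec$ out of $\Hom(\Bbbk S, A)_\prec$ by the finitely many associativity equations indexed by $S_3$, and linearizing these equations at $\widetilde\phi$ over $\Bbbk[\epsilon]/(\epsilon^2)$ identifies $\mathrm T_{\widetilde\phi} V_\prec$ with the $2$-cocycles of $\mathfrak p(Q, R_{\phi+\widetilde\phi})$ via Remark \ref{remark:computinghh2} (using that $A$ and $A_{\phi+\widetilde\phi}$ share the basis $\Irr_S$, so $\Hom(\Bbbk S, A)_\prec$ and $\Hom(\Bbbk S, A_{\phi+\widetilde\phi})_\prec$ are the same space). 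That part of the argument is sound.

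There are two genuine gaps. The smaller one is a degree error in part \itemiii: you differentiate families $\sigma_\epsilon = \id + \epsilon\tau$ with ``$\tau \in \Hom(\Bbbk S, A)_\prec$'', but an infinitesimal automorphism is determined by where it sends the \emph{arrows}, so $\tau$ must live in the degree-$0$ part $\Hom(\Bbbk S_1, A) = \Hom(\Bbbk Q_1, A)$ of $\mathfrak p(Q,R)$ (with its own order condition), not in the degree-$1$ part $\Hom(\Bbbk S, A)_\prec$; the resulting tangent vector is then $d_{\phi+\widetilde\phi}\tau$, the image of a $1$-cochain under the differential of the deformed complex, which is how it becomes a $2$-coboundary. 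As written, the objects you differentiate are $2$-cochains and the argument does not parse. The larger gap is the one you yourself flag in part \itemi: the theorem asserts that the orbits of a groupoid defined by \emph{equivalence of reduction systems} (i.e.\ two right-hand sides $\phi+\widetilde\phi$ and $\phi+\widetilde\phi'$ generating the same two-sided ideal of $\Bbbk Q$, compatibly with $\prec$) coincide with the equivalence classes of Maurer--Cartan elements. Your proposal substitutes a groupoid of order-preserving intertwiners of star products and appeals to $L_\infty$ gauge equivalence, but the bridge between these three notions --- ideal-theoretic equivalence of reduction systems, order-preserving linear intertwiners, and gauge equivalence in $\mathfrak p(Q,R)$ --- is precisely the substance of part \itemi{} and is left as an announced ``inductive argument'' rather than carried out. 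Without it, neither direction of the ``if and only if'' in \itemi{} is established, and part \itemiii{} (which needs to know what the orbit actually is) inherits the gap.
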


\subsection{Hypersurfaces}
\label{subsection:hypersurfaces}

Affine hypersurfaces, i.e.\ varieties whose coordinate ring is of the form $A = \Bbbk [x_1, \dotsc, x_d] / (f)$ with $\deg f = n \geq 2$, are an interesting class of algebraic varieties. A projective $A$-bimodule resolution of affine hypersurfaces was given by the Buenos Aires Cyclic Homology Group (BACH) in \cite{guccioneguccioneredondovillamayor}. In the following, we show that this resolution can be viewed as a special case of the Bardzell--Chouhy--Solotar resolution $P_\ldot$ \eqref{resolution}.

Let $Q$ be the quiver with one vertex and $d$ loops $x_1, \dotsc, x_d$. Write $A = \Bbbk Q / I$, where the ideal $I$ is generated by the set
\[
\{f\} \cup \{x_j x_i - x_i x_j\}_{1 \leq i < j \leq d}.
\]
After a linear change of coordinates we may assume that $f$ has leading term $x_d^n$ with respect to the lexicographic ordering $x_1\prec x_2 \prec \dotsb \prec x_d$. We may thus write 
\[
f = x_d^n + \sum_{\substack{0 \leq i_1, \dotsc, i_d \leq n \\ i_1 + \dotsb + i_d \leq n}} \lambda_{i_1, \dotsc, i_d} x_1^{i_1} x_2^{i_2}\dotsb x_d^{i_d}
\] 
with $\lambda_{i_1, \dotsc, i_d} \in \Bbbk$ and $\lambda_{0, \dotsc, 0, n} = 0$. Then we have a reduction system
\[
R = \bigl\{ \bigl( x_d^n, - \textstyle\sum \lambda_{i_1, \dotsc, i_d} x_1^{i_1} x_2^{i_2}\dotsb x_d^{i_d} \bigr) \bigr\} \cup \bigl\{ (x_j x_i, x_i x_j) \bigr\}_{1 \leq i < j \leq d}
\]
which satisfies the condition ($\diamond$) for $I$.
We have $S = \{ x_d^n\} \cup \{x_{i_2}x_{i_1}\}_{1 \leq i_1< i_2\leq d}$.

Set $S_0 = Q_0 = \{\bullet \}$ and $S_1 = Q_1 = \{x_i\}_{1\leq i \leq d}$.  For $m \geq 0$ the set of $m$-ambiguities 
\[
S_{m+2} = \{x_d^{nj} x_{i_k}  \dotsb x_{i_2} x_{i_1} \mid  j, k \geq 0,  \  1 \leq i_1 < \dotsb < i_k \leq d\quad  \text{and}\quad  2j + k = m+2 \}. 
\]
In particular, we have $S_2 = S$ and \eqref{resolution} gives an explicit resolution $P_\ldot$ of $A$.

\begin{proposition}
The resolution $P_\ldot$ is isomorphic to the BACH resolution $C_\ldot$.
\end{proposition}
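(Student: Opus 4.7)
The plan is to construct an explicit isomorphism of $A$-bimodule complexes $\Phi\colon C_\ldot \tosim P_\ldot$ by matching up generators and then verifying that the differentials agree.

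First I would recall the structure of the BACH resolution $C_\ldot$: it is built as a total complex combining the Koszul resolution of $\Bbbk[x_1,\dotsc,x_d]$ with the $2$-periodic resolution of $\Bbbk[x_d]/(x_d^n)$, modified to account for the full polynomial $f$. Concretely, $C_n \simeq \bigoplus_{2j+k=n} A\otimes \Lambda^k(\Bbbk\langle x_1,\dotsc,x_d\rangle)\otimes A$, with a canonical $A^\e$-basis indexed by pairs $(j, K)$ where $j\geq 0$ and $K = \{i_1<\dotsb<i_k\} \subseteq \{1,\dotsc,d\}$ satisfying $2j+k=n$. On the other hand, the description of the $m$-ambiguities given right before the statement shows that $S_{m+2}$ has a $\Bbbk Q_0$-basis indexed by exactly the same pairs $(j, K)$ via $(j, K)\mapsto x_d^{nj}x_{i_k}\dotsb x_{i_1}$. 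I would define $\Phi$ on generators by this bijection, extended $A^\e$-linearly.

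Next, I would verify that the $m$-ambiguity set $S_{m+2}$ is indeed as claimed. The two reduction patterns generating overlaps are $x_d^n$ and $x_j x_i$ for $i<j$. An $m$-ambiguity $u_0 u_1\dotsb u_{m+1}$ is built from overlap sequences of these two types, and a direct induction on $m$ using the definition (each $u_i u_{i+1}$ is reducible, no shorter right-extension works) shows that the admissible words are exactly $x_d^{nj}x_{i_k}\dotsb x_{i_1}$ with $i_1<\dotsb<i_k$ and $2j+k = m+2$. This step is essentially combinatorial and should be straightforward by inspection.

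The main obstacle is matching the differentials. For the Koszul-type generators (those with $j=0$), the Chouhy--Solotar differential on $P_\ldot$, computed via the recursive formula of \cite[Thm.~4.2]{barmeierwang}, should reproduce the Koszul differential $\sum_{\ell}(-1)^\ell(x_{i_\ell}\otimes\widehat{K}_\ell\otimes 1 - 1\otimes\widehat{K}_\ell\otimes x_{i_\ell})$ from $C_\ldot$. The subtle part arises for generators involving $x_d^{nj}$: on $C_\ldot$ the differential produces terms involving $\partial f/\partial x_\ell$ (the Koszul-type differential on $f$) and the norm element $\sum_{i+i'=n-1} x_d^i\otimes x_d^{i'}$ for the $2$-periodic part, while on $P_\ldot$ the recursive formula produces analogous expressions after applying $\phi$ to the overlap $x_d^n\mapsto -\sum \lambda_{i_1,\dotsc,i_d} x_1^{i_1}\dotsb x_d^{i_d}$. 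I expect that after careful bookkeeping of signs and of the order in which right-most reductions are performed, the two formulas become identical, using that $\sum_{i+i'=n-1} x_d^i (x_d\otimes 1 - 1\otimes x_d)x_d^{i'} = x_d^n\otimes 1 - 1\otimes x_d^n$ together with the analogous Koszul-type identity for the lower-order terms of $f$.

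Finally, since $\Phi$ is by construction a bijection on the chosen $A^\e$-bases in each degree and both $C_\ldot$ and $P_\ldot$ are projective resolutions of $A$ augmented by the multiplication map, if $\Phi$ commutes with the differentials then it is automatically an isomorphism of complexes. The bulk of the work is therefore the differential comparison described above; the rest is formal.
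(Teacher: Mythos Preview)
Your approach is essentially the same as the paper's: define the map on generators by the bijection $x_d^{nj}x_{i_k}\dotsb x_{i_1} \leftrightarrow e_{i_1}\dotsb e_{i_k}t^{(j)}$ and check it is a chain isomorphism. The paper's proof is in fact considerably terser than yours --- it simply writes down this map and asserts it is an isomorphism of complexes, leaving the differential comparison (which you outline in some detail) to the reader.
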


\begin{proof}
Let $e_{i_1}\dotsb e_{i_k} t^{(j)}$ be a basis element of the BACH resolution (see \cite[\S 2.3]{guccioneguccioneredondovillamayor}). The map $P_\ldot \tikzto C_\ldot$ given by
\[ 
x_d^{jn} x_{i_k} x_{i_{k-1}} \dotsb x_{i_1} \tikzmapsto e_{i_1}\dotsb e_{i_k} t^{(j)}
\]
is an isomorphism of complexes.
\end{proof}

 \begin{remark}
Under the above identification, by Theorem \ref{theorem:BW1} we obtain an L$_\infty$ algebra structure on the BACH complex $C^{\hdot+1} = \Hom_{A^\e}(C_{\ldot+1}, A)$. 
\end{remark}

The underlying complex of the L$_\infty$ algebra $\mathfrak p (Q, R)$ is $\Hom(\Bbbk S_{\ldot+1}, A)$. Note that we have the following natural identification 
\[
\Hom(\Bbbk S_{i+1}, A) \simeq  \bigoplus_{y \in S_{i+1} } A e_{y}
\]
where $e_{y}$ denotes the dual basis of $y \in S_{i+1}$. Under this identification, the differential of $\mathfrak p(Q, R)$ is given by 
\[
\partial (a  e_{x_d^{nj} x_{i_k}  \dotsb x_{i_2} x_{i_1}}) = \sum_{l=1}^k (-1)^{l-1}a\frac{\partial f}{\partial x_{i_l}} e_{x_d^{n (j+1)} x_{i_k} \dotsb \widehat x_{i_l} \dotsb x_{i_1}}
\]
where $a \in A.$ In particular, we have $\partial (a  e_{x_d^{nj}}) =0$.
Thus we have the following result (also see \cite[Thm.~3.2.7]{guccioneguccioneredondovillamayor}).

\begin{lemma}
\label{lemma:hh2hypersurface}
$\HH^2 (A) \simeq A  / \bigl( \frac{\partial{f}}{\partial{x_1}}, \dotsc, \frac{\partial{f}}{\partial{x_d}} \bigr) \oplus \mathcal N$,
where 
\[
\mathcal N = \Bigl\{ \textstyle\sum\limits_{1\leq i<j\leq d} a_{ji} e_{x_jx_i} \Bigm| \sum\limits_{i = 1}^{d} (a_{ki} - a_{ik})  \frac{\partial{f}}{\partial{x_i}}  = 0 \ \textup{for each $1 \leq k \leq d$} \Bigr\}.
\]
Here we set $a_{ji} = 0$ if $j \leq i$.
\end{lemma}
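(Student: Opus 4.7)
The plan is to read off $\HH^2(A)$ directly from the complex $\Hom(\Bbbk S_\bullet, A)$ using the explicit differential displayed just above the lemma. First I would list the three relevant $\Hom$-groups, using the inventory of ambiguities from the preceding paragraph: $\Hom(\Bbbk S_1, A) = \bigoplus_{i=1}^d A\, e_{x_i}$, $\Hom(\Bbbk S_2, A) = A\, e_{x_d^n} \oplus \bigoplus_{1 \leq i < j \leq d} A\, e_{x_j x_i}$, and $\Hom(\Bbbk S_3, A) = \bigoplus_{i=1}^d A\, e_{x_d^n x_i} \oplus \bigoplus_{i < j < k} A\, e_{x_k x_j x_i}$, with the second summand in $\Hom(\Bbbk S_3, A)$ arising from the parameters $(j,k) = (0,3)$ in the parametrization of $S_{m+2}$.

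Next I would compute the two relevant pieces of the differential by specializing the displayed formula. Setting $(j,k) = (0,1)$ gives $\partial(c\, e_{x_i}) = c\, \tfrac{\partial f}{\partial x_i}\, e_{x_d^n}$, so the image of $\partial^1$ in $\Hom(\Bbbk S_2, A)$ equals $J(f) \cdot e_{x_d^n}$ with $J(f) = (\partial f/\partial x_1, \ldots, \partial f/\partial x_d)$, lying entirely in the $e_{x_d^n}$-summand. For $\partial^2$, the case $(j,k) = (1,0)$ gives $\partial(b\, e_{x_d^n}) = 0$, while $(j,k) = (0,2)$ with $i_1 = i$, $i_2 = j$ yields $\partial(a_{ji}\, e_{x_j x_i}) = a_{ji} \tfrac{\partial f}{\partial x_i}\, e_{x_d^n x_j} - a_{ji} \tfrac{\partial f}{\partial x_j}\, e_{x_d^n x_i}$. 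A key observation is that the right-hand side of the differential formula is always indexed by $x_d^{n(j+1)} x_{i_k} \cdots \widehat x_{i_l} \cdots x_{i_1}$, so $\partial^2$ never lands in the $e_{x_k x_j x_i}$-summand of $\Hom(\Bbbk S_3, A)$ and the cocycle condition is exhausted by the vanishing of the coefficients of each $e_{x_d^n x_k}$.

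Collecting these coefficients in $\partial^2 \widetilde\phi$ for a general $\widetilde\phi = b\, e_{x_d^n} + \sum_{i<j} a_{ji}\, e_{x_j x_i}$, and adopting the convention $a_{ji} = 0$ for $j \leq i$, I would read off the condition $\sum_{i=1}^d (a_{ki} - a_{ik}) \tfrac{\partial f}{\partial x_i} = 0$ for each $k$; since $b$ is unconstrained, this identifies $\ker \partial^2$ with $A\, e_{x_d^n} \oplus \mathcal N$. Quotienting by $\im \partial^1 = J(f) \cdot e_{x_d^n}$ then yields $\HH^2(A) \simeq A/J(f) \oplus \mathcal N$, as claimed. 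The entire argument is mechanical once the three $\Hom$-groups and the two relevant differentials are written down; the only bookkeeping hazard is matching the ordered-tuple convention $i_1 < i_2 < \cdots$ used in the formula to the $a_{ji}$-notation (with $i < j$) used in the statement of the lemma.
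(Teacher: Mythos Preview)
Your proposal is correct and follows exactly the approach the paper itself indicates: the paper states the differential formula, notes that $\partial(a\,e_{x_d^{nj}})=0$, and writes ``Thus we have the following result'' without spelling out the computation, so you have simply filled in the routine bookkeeping the authors left to the reader. Your identification of $S_1$, $S_2$, $S_3$, the image of $\partial^1$, and the cocycle condition all match the paper's setup, and the sign/index tracking leading to $\sum_i (a_{ki}-a_{ik})\tfrac{\partial f}{\partial x_i}=0$ is accurate.
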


Note that the first summand is isomorphic to the Harrison cohomology $\Har^2 (A)$, which corresponds to commutative deformations of $A$. If $A$ is regular, then $\Har^2 (A) = 0$ and the second summand is the space of bivector fields on $\Spec A$.

\subsubsection{Singular hypersurfaces}

We now give a first affine example by using the deformation--obstruction calculus for the L$_\infty$ algebra $\mathfrak p (Q, R)$.

\begin{example}
Consider $f = x_3^{n} - x_1x_2 \in \Bbbk [x_1, x_2, x_3]$ for $n \geq 2$. Let $A = \Bbbk Q/I = \Bbbk [x_1, x_2, x_3]/(f)$.  We have a reduction system satisfying the condition  ($\diamond$) for $I$
\begin{align*}
R &= \{ (x_3^n, x_1x_2) \} \cup \{(x_2x_1, x_1x_2), (x_3x_1, x_1x_3), (x_3x_2, x_2x_3)\} \\
\text{with}\qquad S &= \{x_3^n, x_2x_1, x_3x_1, x_3x_2\} \\
\text{and}\qquad S_3 &= \{ x_3^n x_1, x_3^n x_2,  x_3^{n+1}, x_3x_2x_1\}.
\end{align*}
Consider the element $\widetilde \phi \in \Hom(\Bbbk S, A) \otimes (t)$ given by 
\begin{flalign*}
&& \widetilde \phi(x_3^n)  &= 0                 & \widetilde \phi(x_3x_1) &= x_1t &&\\
&& \widetilde \phi(x_2x_1) &= (x_3+t)^n - x_3^n & \widetilde \phi(x_3x_2) &= -x_2 t. &&
\end{flalign*}
Write $\star = \star_{\phi + \widetilde \phi}$, where $\phi \in \Hom(\Bbbk S, A)$ is the element corresponding to $R$ (see Remark \ref{remark:f}). Let us verify \eqref{mc} for the elements in $S_3$.  Using the graphical description of $\star$ in \cite[\S10]{barmeierwang}, we have 
\[
x_3^{n-1} \star ( x_3 \star x_1) = (x_3^{n-1} \star x_3) \star x_1
\]
since both sides equal 
\[
x_1^2 x_2 + \sum_{i=1}^n {n \choose i} x_1 x_3^{n-i} t^i.
\]
Similarly, we may verify \eqref{mc} for the other elements $x_3^nx_2, x_3^{n+1}, x_3x_2x_1$ in $S_3$. It follows from Theorem \ref{theorem:higher-brackets} that 
$(A \llrr{t}, \star)$ is a formal deformation of $A$. We also note that $(A \llrr{t}, \star)$ is a deformation quantization of the (exact) Poisson bracket on $A$ determined by
\begin{flalign*}
&& [x_3, x_1] = - \frac{\partial f}{\partial x_2} && [x_3, x_2] = \frac{\partial f}{\partial x_1}  && [x_2, x_1] = \frac{\partial f}{\partial x_3}.&& 
\end{flalign*}
\end{example}

\begin{remark}\label{remark:hypersurfaces}
Let $n = 2$. Evaluating the deformation $(A\llrr{t}, \star)$ at $t = 1$ we obtain that $(A, \star)$ is isomorphic to $\widetilde A = \U (\mathfrak{sl}_2) / (C + \tfrac12)$, where $C = XY + YX + \frac12 H^2$ is the Casimir element of $\U (\mathfrak{sl}_2)$. (This can be seen by constructing an algebra isomorphism which sends $X$ to $-x_1$, $Y$ to $x_2$ and $H$ to $2x_3+1$.)

The category $\Mod (\widetilde A)$ is thus an actual deformation of $\Mod (A) \simeq \Qcoh (\Spec A)$ (cf.\ \eqref{eq}), so that $\Mod (\widetilde A)$ can be viewed as the category of quasi-coherent sheaves on a ``singular quantum hypersurface''.

By Proposition \ref{proposition:z2} \ref{Z2-4} the above deformation can be obtained from the deformed preprojective algebra of type $\widetilde{\mathrm A}_{1}$. Similarly, for general $n \geq 2$, we may recover the deformation $(A\llrr{t}, \star)$ from the deformed preprojective algebra of type $\widetilde{\mathrm A}_{n-1}$ studied in \cite{crawleyboeveyholland}.
\end{remark}

\section{Deformations of categories of coherent sheaves}
\label{section:applications}

In this section we show how deformations of the Abelian category $\Qcoh (X)$ can be described as deformations of a path algebra of a suitable quiver with relations. In this context, there are two main sources of such quivers with relations: one is defined from any affine open cover $\mathfrak U$ of $X$, and the other is defined from a tilting bundle on $X$, if available. The former works in greater generality, but when $X$ admits a tilting bundle, the latter can be computationally more convenient.

Let $X$ be a separated Noetherian scheme over an algebraically closed field $\Bbbk$ of characteristic $0$, so that $X$ admits a finite affine open cover $\mathfrak U$ which is closed under intersections.

The restriction $\mathcal O_X \vert_{\mathfrak U}$ of the structure sheaf to the cover $\mathfrak U$ can be viewed as a {\it diagram of algebras} (see Definition \ref{definition:diagramalgebra}) which is a contravariant functor (a presheaf) $\mathcal O_X \vert_{\mathfrak U} \colon \mathfrak U \tikzto \mathfrak{Alg}_{\Bbbk}$, where $\mathfrak U$ can be viewed as a finite subcategory of the category $\mathfrak{Open} (X)$ of open sets with morphisms given by inclusion. Deformations of diagrams of algebras were first studied by Gerstenhaber--Schack \cite{gerstenhaberschack} and higher structures on the Gerstenhaber--Schack complex were given in Dinh Van--Lowen \cite{dinhvanlowen} and also in Dinh Van--Hermans--Lowen \cite{dinhvanhermanslowen} using operads. (See also \cite{barmeierfregier} for a construction via higher derived brackets.)

The following result establishes an equivalence of different deformation problems.

\begin{theorem}[(Lowen--Van den Bergh \cite{lowenvandenbergh1,lowenvandenbergh2})]
\label{theorem:equivalence}
Let $(X, \mathcal O_X)$ be a quasi-compact and separated scheme over an algebraically closed field $\Bbbk$ of characteristic $0$.

There is an equivalence of formal deformations between deformations of
\begin{enumerate}
\item $\mathcal O_X \rvert_{\mathfrak U}$ as twisted presheaf
\item \label{eqcoh2} $\mathcal O_X \rvert_{\mathfrak U}!$ as associative algebra
\item $\Qcoh (X)$ as Abelian category
\item $\Mod (\mathcal O_X)$ as Abelian category.
\end{enumerate}
Moreover, if $X$ is Noetherian, then the above deformations are also equivalent to deformations of
\begin{enumerate}
\setcounter{enumi}{4}
\item $\coh (X)$ as Abelian category.
\end{enumerate}
\end{theorem}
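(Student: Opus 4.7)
The theorem collects several deep results of Lowen and Van den Bergh spread across \cite{lowenvandenbergh1,lowenvandenbergh2}, and my plan is to decompose it into four natural equivalences of formal deformation functors and treat each one separately. First, for (\textit{i}\hspace{.08em}) $\Leftrightarrow$ (\textit{ii}\hspace{.08em}), I would recall the Gerstenhaber--Schack ! construction which attaches to a (twisted) presheaf of algebras $\mathcal A \colon \mathfrak U^{\op} \to \mathfrak{Alg}_\Bbbk$ a single associative algebra $\mathcal A!$ whose underlying $\Bbbk$-vector space is a direct sum indexed by morphisms of $\mathfrak U$ with multiplication encoding both the local algebra structures and the restriction maps. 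The deformation equivalence then follows from the Gerstenhaber--Schack quasi-isomorphism between the Hochschild complex $\mathrm C^\hdot(\mathcal A!, \mathcal A!)$ and the Gerstenhaber--Schack complex of the diagram $\mathcal A$, where the twist on the presheaf side corresponds to certain degree-$2$ components on the Hochschild side. One must also verify that this is an equivalence of L$_\infty$ structures, not merely cochain complexes; this has been made precise by Dinh Van--Lowen \cite{dinhvanlowen}.

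The crucial step is (\textit{ii}\hspace{.08em}) $\Leftrightarrow$ (\textit{iii}\hspace{.08em}), which I would prove by combining two inputs: the intrinsic deformation theory of Abelian categories from \cite{lowenvandenbergh1}, where deformations of an Abelian category $\mathcal C$ are controlled by a Hochschild-type complex $\mathrm C^\hdot_{\mathrm{Ab}}(\mathcal C)$, and a comparison theorem showing that the Abelian category $\Qcoh(\mathcal O_X\rvert_{\mathfrak U})$ of quasi-coherent modules over the diagram has deformation theory equivalent to that of the diagram $\mathcal O_X\rvert_{\mathfrak U}$ itself. The global hypotheses (quasi-compactness and separatedness) are used precisely to identify $\Qcoh(X) \simeq \Qcoh(\mathcal O_X\rvert_{\mathfrak U})$: any quasi-coherent sheaf is determined by its restrictions to an affine cover closed under intersections together with the restriction maps, and conversely any compatible system on the cover glues uniquely. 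I would then transport the deformation equivalence along this categorical equivalence.

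For (\textit{iii}\hspace{.08em}) $\Leftrightarrow$ (\textit{iv}), I would use that on a quasi-compact separated scheme $\Qcoh(X)$ is a Giraud (reflective) subcategory of $\Mod(\mathcal O_X)$, and invoke Lowen--Van den Bergh's result that flat/Giraud localizations lift uniquely to Abelian deformations, inducing an equivalence of deformation functors. Finally, for (\textit{v}) under the Noetherian hypothesis, I would use that $\coh(X)$ consists exactly of the Noetherian objects of $\Qcoh(X)$ --- an intrinsic categorical property that is preserved and reflected under flat deformations. Every deformation of $\Qcoh(X)$ is again a locally Noetherian Grothendieck category, its subcategory of Noetherian objects is a deformation of $\coh(X)$, and this is a quasi-inverse to the restriction-of-deformations map.

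The main obstacle I anticipate is (\textit{ii}\hspace{.08em}) $\Leftrightarrow$ (\textit{iii}\hspace{.08em}), which requires comparing two rather different flavors of Hochschild cohomology (diagrammatic versus Abelian-categorical) and controlling the full L$_\infty$ structures, not just the cohomology groups that appear as tangent and obstruction spaces. In practice one also has to take some care with the technical setup of twisted presheaves versus ordinary presheaves, since higher-order deformations of an honest presheaf can naturally acquire a $2$-cocycle twist, and both the ! construction and the comparison with $\Qcoh(X)$ have to be set up to accommodate this. Having established these ingredients, the theorem as stated follows by concatenating the four equivalences and observing that each step is natural in the base $(B, \mathfrak m)$ of the deformation.
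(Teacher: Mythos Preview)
The paper does not give its own proof of this theorem: it is stated with the attribution ``(Lowen--Van den Bergh \cite{lowenvandenbergh1,lowenvandenbergh2})'' and used as a black box, with the surrounding discussion only recording the consequences (the isomorphisms \eqref{cohomologies} and the geometric interpretation via the Hochschild--Kostant--Rosenberg decomposition). So there is no in-paper proof to compare your proposal against.

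That said, your outline is a faithful summary of how the result is assembled in the original references, and the decomposition you give --- (\textit{i}\hspace{.08em})\,$\Leftrightarrow$\,(\textit{ii}\hspace{.08em}) via the Gerstenhaber--Schack/Dinh Van--Lowen comparison, (\textit{ii}\hspace{.08em})\,$\Leftrightarrow$\,(\textit{iii}\hspace{.08em}) via $\Qcoh(X)\simeq\Qcoh(\mathcal O_X\rvert_{\mathfrak U})$ together with the Abelian Hochschild complex of \cite{lowenvandenbergh1}, (\textit{iii}\hspace{.08em})\,$\Leftrightarrow$\,(\textit{iv}) via lifting localizations, and (\textit{v}) via Noetherian objects --- matches the logical structure the paper implicitly relies on (see also Remark~\ref{remark:dinhlowen} and the zigzag \eqref{zigzag}). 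Your identification of (\textit{ii}\hspace{.08em})\,$\Leftrightarrow$\,(\textit{iii}\hspace{.08em}) as the delicate step, and the need to compare L$_\infty$ structures rather than just cohomology, is exactly the point the paper highlights when citing \cite{dinhvanlowen} and \cite[Thm.~6.1]{lowenvandenbergh1}.
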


In the remainder we phrase all statements for deformations of $\Qcoh (X)$ but since we work with Noetherian schemes, Theorem \ref{theorem:equivalence} shows that analogous statements hold for $\Mod (\mathcal O_X)$ and $\coh (X)$.

The different types of deformations in Theorem \ref{theorem:equivalence} are parametrized by what are essentially various versions of Hochschild cohomology with first-order deformations parametrized by
\begin{equation}
\label{cohomologies}
\HH^2 (X) \simeq \H^2_{\mathrm{GS}} (\mathcal O_X \vert_{\mathfrak U}) \simeq \HH^2 (\mathcal O_X \vert_{\mathfrak U}!) \simeq \H_{\mathrm{Ab}}^2 (\Qcoh (X))
\end{equation}
and obstructions in $\HH^3 (X) \simeq \dotsb \simeq \H_{\mathrm{Ab}}^3 (\Qcoh (X))$. Here $\HH^\hdot (A) = \Ext^\hdot_{A^\e} (A, A)$ is the usual Hochschild cohomology for associative algebras, and the Hochschild cohomology of a scheme may be defined analogously as
\begin{equation}
\label{hochschildscheme}
\HH^\hdot (X) := \Ext^\hdot_{\mathcal O_{X \times X}} (\delta_* \mathcal O_X, \delta_* \mathcal O_X)
\end{equation}
where $\delta_* \mathcal O_X$ is the pushforward of the structure sheaf along the diagonal map $\delta \colon X \tikzto X \times X$ \cite{gerstenhaberschack,kontsevich1,swan}. Also, $\H^\hdot_{\mathrm{GS}}$ is a cohomology theory for diagrams (or prestacks) of algebras (see \cite{dinhvanlowen,gerstenhaberschack}) and $\H^\hdot_{\mathrm{Ab}}$ a cohomology theory for Abelian categories (see \cite{lowenvandenbergh1,lowenvandenbergh2}).

\subsubsection{Geometry}

Theorem \ref{theorem:equivalence} shows that the deformation theory of the Abelian category $\Qcoh (X)$ admits several equivalent algebraic descriptions --- namely deformations of $\mathcal O_X \vert_{\mathfrak U}$ as a twisted presheaf, or deformations of the diagram algebra $\mathcal O_X \vert_{\mathfrak U}!$ as an associative algebra.

The isomorphisms (\ref{cohomologies}) showed that the cohomology groups parametrizing these deformations are isomorphic to the Hochschild cohomology $\HH^2 (X)$ of the scheme, furnishing the following geometric interpretation.

If $X$ is smooth, then the Hochschild--Kostant--Rosenberg theorem (see \cite{yekutieli1}) gives a decomposition
\begin{equation}
\label{hkr}
\HH^n (X) \simeq \bigoplus_{p+q=n} \H^p (\Lambda^q \mathcal T_X)
\end{equation}
where $\Lambda^q \mathcal T_X$ is the sheaf of sections in the $q$th exterior power of the tangent bundle. A similar geometric interpretation can be given for singular $X$ for which a decomposition of $\HH^\hdot (X)$ was given in \cite{buchweitzflenner1,buchweitzflenner2}.

First-order deformations of $\Qcoh (X)$ are thus parametrized by
\[
\HH^2 (X) \simeq \H^0 (\Lambda^2 \mathcal T_X) \oplus \H^1 (\mathcal T_X) \oplus \H^2 (\mathcal O_X)
\]
where
\begin{enumerate}
\item $\H^0 (\Lambda^2 \mathcal T_X)$ is the space of bivector fields, which for Poisson bivector fields parametrize a (noncommutative) algebraic quantization of $\mathcal O_X$
\item $\H^1 (\mathcal T_X)$ is well known to parametrize algebraic deformations of $X$ as a scheme (which over $\Bbbk = \mathbb C$ corresponds to classical deformations of the complex structure), and
\item $\H^2 (\mathcal O_X)$ parametrizes ``twists'', i.e.\ deformations of the (trivial) $\mathcal O_X^*$-gerbe structure of $\mathcal O_X$.
\end{enumerate}
Hence deformations of $\Qcoh (X)$ can be thought of as a combination of these three types of deformations.

Note that if $X$ is a curve, then $\HH^2 (X) \simeq \H^1 (\mathcal T_X)$ and $\HH^3 (X) = 0$ so that all deformations of $\Qcoh (X)$ are induced by classical deformations of the curve (cf.\ Example \ref{example:genus3}).

\subsection{Deformations via the diagram algebra}
\label{subsection:diagram}

First we shall consider \ref{eqcoh2} of Theorem \ref{theorem:equivalence} which concerns deformations of the so-called {\it diagram algebra} $\mathcal O_X \vert_{\mathfrak U}!$ which is defined as follows.

\begin{definition}
\label{definition:diagramalgebra}
A {\it diagram of $\Bbbk$-algebras} over a small category $\mathfrak U$ is a contravariant functor $\mathcal A \colon \mathfrak U \tikzto \mathfrak{Alg}_{\Bbbk}$ to the category of associative $\Bbbk$-algebras.

The {\it diagram algebra} of $\mathcal A$, denoted $\mathcal A!$, is given as $\Bbbk$-module by
\[
\mathcal A! = \prod_{U \in \mathfrak U} \, \bigoplus_{f \colon U \smallto V} \mathcal A (U) x_f
\]
where the sum is over all morphisms in $\mathfrak U$, and $x_f$ is simply a formal (bookkeeping) symbol. The multiplication of elements $a \in \mathcal A (U)$ and $b \in \mathcal A (V)$ is defined by
\begin{flalign*}
&& (a x_f) (b x_g) =
\begin{cases}
a \mathcal A (f) (b) x_{g \circ f} & \text{if $g \circ f$ is defined} \\
0 & \text{otherwise}
\end{cases} &&
\end{flalign*}
where $a \mathcal A (f) (b)$ is the product of $a$ and $\mathcal A (f) (b)$ in the algebra $\mathcal A (U)$
\[
\begin{tikzpicture}[baseline=-2.75pt,x=5.5em,y=2em]
\node (U) at (0,0) {$U$};
\node (V) at (1,0) {$V$};
\node (W) at (2,0) {$W$};
\node (AU) at (0,-1) {$\mathcal A (U)$};
\node (AV) at (1,-1) {$\mathcal A (V)$};
\node (AW) at (2,-1) {$\mathcal A (W)$\rlap{.}};
\path[->,line width=.4pt,font=\scriptsize]
(U) edge node[above=-.2ex] {$f$} (V)
(V) edge node[above=-.2ex] {$g$} (W)
(AV) edge node[above=-.2ex,pos=.45] {$\mathcal A (f)$} (AU)
(AW) edge node[above=-.2ex,pos=.45] {$\mathcal A (g)$} (AV)
;
\end{tikzpicture}
\]
\end{definition}

\begin{definition}
\label{definition:modules}
Let $\mathcal A$ be a diagram of algebras. An {\it $\mathcal{A}$-module} $\mathcal{M}$ is a contravariant functor $\mathcal M \colon \mathfrak U \tikzto \mathfrak{Vec}_{\Bbbk} $ to the category of $\Bbbk$-vector spaces such that  $\mathcal M(U)$ is an $\mathcal{A}(U)$-module for each $U \in \mathfrak U$ and  $\mathcal{M}(f) \colon \mathcal M(V)\tikzto \mathcal M(U)$ is a morphism of $\mathcal A(V)$-modules for each morphism $f \colon U \tikzto V$.

An $\mathcal A$-module $\mathcal M$ is {\it quasi-coherent} if the induced map $\mathcal A(U) \otimes_{\mathcal A(V)} \mathcal M(V) \tikzto \mathcal M(U)$ is an isomorphism for any $f \colon U \tikzto V$ (see Enochs--Estrada \cite[\S2]{enochsestrada}). We denote by $\Mod (\mathcal A)$ the Abelian category of all $\mathcal A$-modules and by $\Qcoh (\mathcal A)$ the full subcategory of quasi-coherent modules.
\end{definition}

This definition of modules over a {\it diagram} of algebras is compatible with the usual notion of modules in that we have an equivalence of Abelian categories $\Mod (\mathcal A) \simeq \Mod (\mathcal A!)$. We denote by $\Qcoh(\mathcal A!)$ the full subcategory of the images of $\Qcoh(\mathcal A)$. It follows from \cite[\S2]{enochsestrada} that for any separated Noetherian scheme $X$ and any finite affine open cover $\mathfrak{U}$ which is closed under intersections, we have
\[
\Qcoh(X) \simeq \Qcoh(\mathcal O_X \vert_{\mathfrak U}) \simeq \Qcoh(\mathcal O_X \vert_{\mathfrak U}!).
\]

Theorem \ref{theorem:equivalence} states that studying deformations of $\Qcoh (X)$ as Abelian category is equivalent to studying deformations of the diagram algebra $\mathcal O_X \vert_{\mathfrak U}!$. The construction summarized in the following proposition can be used to study deformations of $\Qcoh (X)$ for $X$ any separated Noetherian scheme via the approach outlined in \S\ref{section:reduction} for deformations of $\Bbbk Q / I$.

\begin{proposition}
\label{proposition:diagram}
Let $X$ be any separated Noetherian scheme and let $\mathfrak U$ be any finite affine open cover of $X$ which is closed under intersections.

The diagram algebra $\mathcal O_X \vert_{\mathfrak U}!$ is isomorphic to the path algebra of a finite quiver $Q^{\mathrm{diag}}$ on $\# \mathfrak U$ vertices modulo a finitely generated ideal $J$ of relations. Moreover, there exists a finite reduction system $R^{\mathrm{diag}}$ satisfying \textup{($\diamond$)} for $J$.
\end{proposition}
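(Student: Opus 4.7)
The plan is to construct $Q^{\mathrm{diag}}$, $J$, and $R^{\mathrm{diag}}$ explicitly out of Gröbner bases of the local algebras $\mathcal O_X(U)$ and then verify $(\diamond)$ via Bergman's Diamond Lemma. Since $X$ is Noetherian and each $U \in \mathfrak U$ is affine, each $\mathcal O_X(U)$ is a finitely generated commutative $\Bbbk$-algebra, and by Remark \ref{remark:finite} admits a presentation $\Bbbk[y_1^U, \ldots, y_{n_U}^U]/I_U$ with $I_U$ given by a finite Gröbner basis $G_U$. I would define $Q^{\mathrm{diag}}$ to have vertex set $\mathfrak U$, with a loop $y_i^U$ at $U$ for each generator and a single arrow $\rho_{UV} \colon U \to V$ for each strict inclusion $U \subsetneq V$ in $\mathfrak U$; this is a finite quiver on $\#\mathfrak U$ vertices. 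I would take $J$ to be the two-sided ideal generated by three finite families of relations: \emph{(i)} at each vertex $U$, the commutators $[y_i^U, y_j^U]$ together with the Gröbner basis $G_U$; \emph{(ii)} for each inclusion $U \subsetneq V$ and each generator $y_i^V$, the commutation relation $\rho_{UV} y_i^V - p_i^{UV} \rho_{UV}$, where $p_i^{UV}$ is a standard-monomial representative of the restriction $\mathrm{res}_{UV}(y_i^V) \in \mathcal O_X(U)$; and \emph{(iii)} for each chain $U \subsetneq V \subsetneq W$ in $\mathfrak U$, the composition relation $\rho_{UV}\rho_{VW} - \rho_{UW}$.

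Next I would check that the $\Bbbk$-algebra map $\Phi \colon \Bbbk Q^{\mathrm{diag}}/J \to \mathcal O_X\rvert_{\mathfrak U}!$ sending $y_i^U \mapsto y_i^U x_{\mathrm{id}_U}$ and $\rho_{UV} \mapsto x_{\iota_{UV}}$ is an isomorphism. Well-definedness is immediate since \emph{(i)} encodes the algebra structure of $\mathcal O_X(U)$, \emph{(ii)} encodes that $\mathrm{res}_{UV}$ is a ring homomorphism, and \emph{(iii)} encodes the functoriality $\mathrm{res}_{UW} = \mathrm{res}_{UV}\circ \mathrm{res}_{VW}$. Surjectivity is clear from the choice of generators. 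Injectivity is to be deduced from $(\diamond)$: the Diamond Lemma will produce a $\Bbbk$-spanning set of $\Bbbk Q^{\mathrm{diag}}/J$ consisting of elements $m\cdot \rho_{UV}$ with $m$ a standard monomial of $\mathcal O_X(U)$ and $U \subseteq V$ (setting $\rho_{UU} := e_U$), whose image under $\Phi$ is precisely the natural $\Bbbk$-basis of $\mathcal O_X\rvert_{\mathfrak U}! = \bigoplus_{U \subseteq V} \mathcal O_X(U)\, x_{\iota_{UV}}$.

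For the reduction system $R^{\mathrm{diag}}$ I would take the leading-term rewrites of the three families of relations: the Gröbner reductions within each $A_U$, the commutations $\rho_{UV} y_i^V \to p_i^{UV} \rho_{UV}$, and the compositions $\rho_{UV}\rho_{VW} \to \rho_{UW}$. To make the Diamond Lemma applicable I would equip the path set $Q^{\mathrm{diag}}_\ldot$ with an admissible weight-and-length order in which each loop $y_i^V$ is given weight $C^{\alpha(V)}$, where $\alpha \colon \mathfrak U \to \mathbb Z_{\geq 0}$ is any strict monotone extension of the inclusion poset (so $\alpha(U) < \alpha(V)$ when $U \subsetneq V$) and $C$ is chosen larger than the maximum total degree of any of the finitely many polynomials $p_i^{UV}$, refined by the number of $\rho$-arrows and by the admissible Gröbner order within each vertex block. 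With this order each of the three rewrites strictly decreases the order, so the system is reduction-finite.

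The main technical obstacle is confluence. I expect the overlap ambiguities to split into four families: \emph{(a)} internal Gröbner overlaps within a fixed $A_U$, resolved by the Gröbner property of $G_U$; \emph{(b)} overlaps of a commutation from \emph{(ii)} with a Gröbner relation of $A_V$, resolved because $\mathrm{res}_{UV}(I_V) \subseteq I_U$; \emph{(c)} mixed overlaps $\rho_{UV}\rho_{VW}\, y_i^W$ of types \emph{(ii)} and \emph{(iii)}, resolved by the cocycle identity $p_i^{UW} = p_i^{VW}(p_1^{UV},\ldots,p_{n_V}^{UV})$, which is exactly the functoriality of restriction; and \emph{(d)} triple-composition overlaps $\rho_{UV}\rho_{VW}\rho_{WX}$, resolved by associativity of composition of inclusions. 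Since each identity holds in the presheaf $\mathcal O_X\rvert_{\mathfrak U}$, every overlap resolves, $(\diamond)$ follows from Bergman's Diamond Lemma, and then injectivity of $\Phi$ is obtained by matching $\Bbbk$-bases. The care required in \emph{(c)} — ensuring that the polynomial expansion inside $p_i^{VW}(p_\bullet^{UV})$ reduces under $G_U$ to the standard representative of $p_i^{UW}$ — is where the bulk of the verification lies, and it is ultimately a direct consequence of functoriality of $\mathcal O_X$.
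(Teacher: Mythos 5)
Your construction is sound and follows essentially the same strategy as the paper's proof: vertices for the opens, loops for generators of the local rings (finitely many by Noetherianity), arrows realizing the restriction maps, relations of three kinds (local Gröbner relations, compatibility of generators with restriction, functoriality of restriction), and a verification of ($\diamond$) in which the mixed overlaps resolve precisely because each restriction is a ring homomorphism and $\mathcal O_X$ is a functor. The one structural difference is in the poset direction: the paper only introduces the ``elementary'' arrows $f^{\mathbf i}_j$ (adjoining a single index of the cover at a time) and imposes commutativity of the resulting squares, with reductions reordering the indices, whereas you introduce one arrow $\rho_{UV}$ for \emph{every} inclusion and impose the composition relations $\rho_{UV}\rho_{VW} = \rho_{UW}$, oriented so that compositions collapse. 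Both work; your quiver has more arrows but shorter irreducible paths (at most one $\rho$ per irreducible path), and your overlap families (a)--(d) are the exact counterparts of the paper's list (overlaps within $R_0$ and $R_2$, $s^{\mathbf i} f^{\mathbf i}_j$, and $x^{\mathbf i}_r f^{\mathbf i}_k f^{\mathbf i k}_j$), resolved by the same reasoning: reduction at a vertex induces the identity on $\mathcal O_X(U_{\mathbf i})$, and the arrows realize the restriction homomorphisms, so both branches of a mixed overlap compute the normal form of the same element of $\mathcal O_X(U)$.

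One caveat about your admissible order. You make reduction-finiteness explicit via the weights $C^{\alpha(V)}$, which is more than the paper does (it only checks resolvability of overlaps). But as stated your order need not decrease under the vertex-local Gröbner rewrites: if $G_U$ is computed with respect to a term order that is not degree-compatible (e.g.\ pure lex), a local rewrite $s \to \phi_s$ can strictly increase total degree and hence the weight, and a lexicographic refinement placed after the weight cannot repair a strict increase in the primary key. This is fixed by insisting that each $G_U$ be taken with respect to a degree-compatible order (deglex or degrevlex), so that local rewrites are weight-nonincreasing and the refinement by the local Gröbner order then gives strict decrease. With that adjustment, and taking $C$ larger than the degrees of the finitely many $p^{UV}_i$ as you do, your argument is complete.
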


\begin{proof}
We give a general constructive proof, but for concrete examples see Example \ref{example:genus3} and \S\ref{subsection:zkdiagram}.

Let $U_1, \dotsc, U_n$ be an affine open cover of $X$ and let $\mathfrak U = \{ U_{i_1 \cdots i_m} \mid 1 \leq i_1 < \dotsb < i_m \leq n \}_{1 \leq m \leq n}$ where
\[
U_{i_1 \cdots i_m} = U_{i_1} \cap \dotsb \cap U_{i_m}
\]
i.e.\ $\mathfrak U$ is the closure of $\{ U_1, \dotsc, U_n \}$ under taking intersections.

The diagram $\mathcal O_X \vert_{\mathfrak U}$ can be viewed as the $1$-skeleton of an $n$-hypercube with one vertex (corresponding to $X$) and the incident edges removed. The cover $\mathfrak U$ has cardinality $2^n - 1$ and the diagram algebra $\mathcal O_X \vert_{\mathfrak U}!$ can then be written as the path algebra $\Bbbk Q^{\mathrm{diag}} / J$ of a quiver $Q^{\mathrm{diag}}$ with relations $J$ as follows.

Let $Q^{\mathrm{diag}}$ be the quiver on $2^n - 1$ vertices, each vertex corresponding to an open set $U_{i_1 \cdots i_m} \in \mathfrak U$ and let each vertex be labelled by $i_1 \cdots i_m$, say. To simplify the indexing, let us write $\mathbf i$ for some label $i_1 \cdots i_m$ and if $m < n$ and $j \in \{ 1, \dotsc, n \} \setminus \{ i_1, \dotsc, i_m \}$ we write $\mathbf i j$ for the label $i_1 \cdots j \cdots i_m$ obtained by adding $j$ to $\mathbf i$. We will now add several sets of arrows to the quiver.

If $\mathbf i = i_1 \cdots i_m$, there are $n-m$ inclusions $U_{\mathbf i j} \subset U_{\mathbf i}$, so for each such inclusion we add an arrow
\[
\mathbf i j \longleftarrowarg{f^{\mathbf i}_j} \mathbf i
\]
in the opposite direction since $\mathcal O_X \vert_{\mathfrak U}$ is contravariant. For example, for $n = 1, 2, 3$ this gives the following acyclic quivers
\begin{equation}
\label{acyclicpn}
\begin{tikzpicture}[baseline=-2.75pt,line width=.4pt,x=3.75em,y=1.75em]
\node[font=\scriptsize,inner sep=1.3pt] (L)  at (0,0) {$01$};
\node[font=\scriptsize,inner sep=1.3pt] (R1)  at (1,.5) {$0$};
\node[font=\scriptsize,inner sep=1.3pt] (R2)  at (1,-.5) {$1$};
\path[->]
(R1) edge (L)
(R2) edge (L)
;
\begin{scope}[shift={(8em,0)}]
\node[font=\scriptsize,inner sep=1.3pt] (L)  at (0,0) {$012$};
\node[font=\scriptsize,inner sep=1.3pt] (M1) at (1,1) {$01$};
\node[font=\scriptsize,inner sep=1.3pt] (M2) at (1,0) {$02$};
\node[font=\scriptsize,inner sep=1.3pt] (M3) at (1,-1) {$12$};
\node[font=\scriptsize,inner sep=1.3pt] (R1) at (2,1) {$0$};
\node[font=\scriptsize,inner sep=1.3pt] (R2) at (2,0) {$1$};
\node[font=\scriptsize,inner sep=1.3pt] (R3) at (2,-1) {$2$};
\path[->]
(M1) edge (L.25)
(M2) edge (L)
(M3) edge (L.335)
(R1) edge (M1)
(R1) edge (M2.10)
(R2) edge (M1)
(R2) edge (M3)
(R3) edge (M2.350)
(R3) edge (M3)
;
\end{scope}
\begin{scope}[shift={(20em,0)}]
\node[font=\scriptsize,inner sep=1.3pt] (0123)  at (0,0) {$0123$};
\node[font=\scriptsize,inner sep=1.3pt] (012) at (1,1.5) {$012$};
\node[font=\scriptsize,inner sep=1.3pt] (013) at (1,.5) {$013$};
\node[font=\scriptsize,inner sep=1.3pt] (023) at (1,-.5) {$023$};
\node[font=\scriptsize,inner sep=1.3pt] (123) at (1,-1.5) {$123$};
\node[font=\scriptsize,inner sep=1.3pt] (01) at (2,2.5) {$01$};
\node[font=\scriptsize,inner sep=1.3pt] (02) at (2,1.5) {$02$};
\node[font=\scriptsize,inner sep=1.3pt] (03) at (2,.5) {$03$};
\node[font=\scriptsize,inner sep=1.3pt] (12) at (2,-.5) {$12$};
\node[font=\scriptsize,inner sep=1.3pt] (13) at (2,-1.5) {$13$};
\node[font=\scriptsize,inner sep=1.3pt] (23) at (2,-2.5) {$23$};
\node[font=\scriptsize,inner sep=1.3pt] (0) at (3,1.5) {$0$};
\node[font=\scriptsize,inner sep=1.3pt] (1) at (3,.5) {$1$};
\node[font=\scriptsize,inner sep=1.3pt] (2) at (3,-.5) {$2$};
\node[font=\scriptsize,inner sep=1.3pt] (3) at (3,-1.5) {$3$};
\path[->]
(012) edge (0123.25)
(013) edge (0123.9)
(023) edge (0123.351)
(123) edge (0123.335)
(01) edge (012)
(01) edge (013.20)
(02) edge (012)
(02) edge (023.20)
(03) edge (013)
(03) edge (023.350)
(12) edge (012.340)
(12) edge (123)
(13) edge (013.340)
(13) edge (123)
(23) edge (023.340)
(23) edge (123)
(0) edge (01.10)
(0) edge (02)
(0) edge (03.10)
(1) edge (01.340)
(1.180) edge (12)
(1) edge (13.20)
(2) edge (02.340)
(2) edge (12)
(2) edge (23.20)
(3.140) edge (03.350)
(3) edge (13)
(3.220) edge (23.350)
;
\end{scope}
\end{tikzpicture}
\end{equation}
For each square
\begin{equation}
\label{square}
\begin{tikzpicture}[baseline=-2.75pt,line width=.4pt,x=-2em,y=1.75em]
\node[font=\footnotesize] (L)  at (0,0) {$\mathbf i$};
\node[font=\footnotesize] (M1) at (2,1) {$\mathbf i j$};
\node[font=\footnotesize] (M2) at (2,-1) {$\mathbf i k$};
\node[font=\footnotesize] (R)  at (4,0) {$\mathbf i j k$};
\path[->]
(L) edge node[font=\scriptsize,above=-.2ex] {$f^{\mathbf i}_j$} (M1)
(L) edge node[font=\scriptsize,below=-.2ex] {$f^{\mathbf i}_k$} (M2)
(M1) edge node[font=\scriptsize,above=-.2ex] {$f^{\mathbf i j}_k$} (R.20)
(M2) edge node[font=\scriptsize,below=-.2ex] {$f^{\mathbf i k}_j$} (R.345)
;
\end{tikzpicture}
\end{equation}
we add the relation
\begin{equation}
\label{relationsquare}
f^{\mathbf i}_k f^{\mathbf i k}_j - f^{\mathbf i}_j f^{\mathbf i j}_k
\end{equation}
since the diagram $\mathcal O_X \vert_{\mathfrak U}$ is commutative. Let $J_2$ denote the ideal generated by the relations \eqref{relationsquare} for all squares of the form \eqref{square}. Let
\[
R_2 = \bigcup_{1 \leq \lvert \mathbf i \rvert \leq n - 2} \{ (f^{\mathbf i}_k f^{\mathbf i k}_j, f^{\mathbf i}_j f^{\mathbf i j}_k) \}_{j < k}
\]
where the length $\lvert \mathbf i \rvert$ of the label is assumed to be at most $n - 2$ so that adding two distinct indices is possible. Then $R_2$ is a reduction system which satisfies $(\diamond)$ for the ideal $J_2$, since reductions correspond to reordering the lower indices to be strictly increasing. (Any overlap for $R_2$ is of the form $f^{\mathbf i}_l f^{\mathbf i l}_k f^{\mathbf i k l}_j$ for some $j < k < l$, which uniquely resolves to $f^{\mathbf i}_j f^{\mathbf i j}_k f^{\mathbf i j k}_l$.)

Now at each vertex $\mathbf i$ we have a finitely generated commutative algebra
\[
\mathcal O_X (U_{\mathbf i}) \simeq \Bbbk [x^{\mathbf i}_1, \dotsc, x^{\mathbf i}_{N_{\mathbf i}}] / (F^{\mathbf i}_1, \dotsc, F^{\mathbf i}_{M_{\mathbf i}}).
\]
This algebra can be written as the path algebra of a quiver of a single vertex with $N_{\mathbf i}$ loops $x^{\mathbf i}_1, \dotsc, x^{\mathbf i}_{N_{\mathbf i}}$ modulo the ideal $J^{\mathbf i}$ generated by commutativity relations $x^{\mathbf i}_j x^{\mathbf i}_i = x^{\mathbf i}_i x^{\mathbf i}_j$ and the relations $F^{\mathbf i}_1, \dotsc, F^{\mathbf i}_{M_{\mathbf i}}$. For each vertex $\mathbf i$ fix any finite reduction system $R^{\mathbf i}$ satisfying $(\diamond)$ for $J^{\mathbf i}$. (Note that such a finite reduction system exists by Remark \ref{remark:finite} and can be explicitly computed from a commutative Gröbner basis.) At each vertex $\mathbf i$ we thus add $N_{\mathbf i}$ loops to the quiver $Q^{\mathrm{diag}}$ and set
\[
R_0 = \bigcup_{1 \leq \lvert \mathbf i \rvert \leq n} R^{\mathbf i}.
\]

We need to add one last set of relations involving the loops at two vertices which are connected by an arrow of the underlying acyclic quiver. For each $f^{\mathbf i}_j$ we have
\[
\begin{tikzpicture}[baseline=-2.75pt,line width=.4pt,x=3em,y=2em]
\node[font=\footnotesize,inner sep=1.3pt,shape=circle] (L)  at (0,0) {\eqmakebox[vw]{$\mathbf i j$}};
\node[font=\footnotesize,inner sep=1.3pt,shape=circle] (R)  at (2,0) {\eqmakebox[vw]{$\mathbf i$}};
\path[->]
(R) edge node[font=\scriptsize,above=-.2ex] {$f^{\mathbf i}_j$} (L)
;
\path[->,line width=.4pt,font=\scriptsize, looseness=16, in=35, out=325,transform canvas={xshift=1pt,yshift=-.6pt}]
(R.340) edge (R.20)
;
\node[font=\scriptsize] at (2.94,0) {$...$};
\node[font=\scriptsize] at (-.99,0) {$...$};
\path[->,line width=.4pt,font=\scriptsize, looseness=16, in=40, out=320,overlay]
(R.320) edge (R.40)
;
\path[<-,line width=.4pt,font=\scriptsize, looseness=16, in=215, out=145,transform canvas={xshift=-1pt,yshift=-.6pt}]
(L.160) edge (L.200)
;
\path[<-,line width=.4pt,font=\scriptsize, looseness=15, in=220, out=140,overlay]
(L.140) edge (L.220)
;
\node[font=\scriptsize] at (-.7,-1) {$x^{\mathbf i j}_1{,} ..., x^{\mathbf i j}_{N_{\mathbf i j}}$};
\node[font=\scriptsize] at (2.7,-1) {$x^{\mathbf i}_1{,} ..., x^{\mathbf i}_{N_{\mathbf i}}$};
\end{tikzpicture}
\]
Now add relations $x^{\mathbf i}_r f^{\mathbf i}_j - f^{\mathbf i}_j X^{\mathbf i, j}_r$, where $X^{\mathbf i, j}_r$ is the image of the generator $x^{\mathbf i}_r$ of $\mathcal O_X (U_{\mathbf i})$ under the restriction map $\mathcal O_X (U_{\mathbf i}) \tikzto \mathcal O_X (U_{\mathbf i j})$, expressed in terms of a linear combination of irreducible paths (with respect to $R^{\mathbf i, j}$) in the generators $x^{\mathbf i j}_1, \dotsc, x^{\mathbf i j}_{N_{\mathbf i j}}$.

These relations are encoded into a reduction system by setting
\[
R^{\mathbf i, j} = \{ (x^{\mathbf i}_r f^{\mathbf i}_j, f^{\mathbf i}_j X^{\mathbf i, j}_r) \}_{1 \leq r \leq N_{\mathbf i}}
\]
and
\[
R_1 = \bigcup_{1 \leq \lvert \mathbf i \rvert \leq n - 1} R^{\mathbf i, j}.
\]

Finally, we define $R^{\mathrm{diag}} = R_0 \cup R_1 \cup R_2$ and let $J = (s - \phi_s)_{(s, \phi_s) \in R^{\mathrm{diag}}}$ as usual. By construction we have that $\Bbbk Q^{\mathrm{diag}} / J \simeq \mathcal O_X \vert_{\mathfrak U}!$.

Clearly $R^{\mathrm{diag}}$ is finite, so it only remains to show that $R^{\mathrm{diag}}$ is reduction-unique, so that $R^{\mathrm{diag}}$ satisfies $(\diamond)$ for $J$. Since $R_1$ has no overlaps and the overlaps in $R_0$ and $R_2$ resolve, we only need to show that overlaps in $R^{\mathrm{diag}}$ arising from combinations in $R_0, R_1, R_2$ also resolve. These additional overlaps are of the form
\begin{enumerate}
\item $s^{\mathbf i} f^{\mathbf i}_j$ for any $(s^{\mathbf i}, \phi_{s^{\mathbf i}}) \in R^{\mathbf i}$
\item $x^{\mathbf i}_r f^{\mathbf i}_k f^{\mathbf i k}_j$ for $j < k$ and any $1 \leq r \leq N_{\mathbf i}$.
\end{enumerate}
Note that at each vertex $\mathbf i$, reductions (with respect to $R^{\mathrm{diag}}$ or equivalently $R^{\mathbf i}$) induce the identity map on the quotient algebra $\Bbbk \langle x^{\mathbf i}_1, \dotsc, x^{\mathbf i}_{N_{\mathbf i}} \rangle / J^{\mathbf i} \simeq \mathcal O_X (U_{\mathbf i})$ and the arrows $f^{\mathbf i}_j$ correspond to the algebra homomorphisms $\mathcal O_X (U_{\mathbf i}) \tikzto \mathcal O_X (U_{\mathbf i j})$. The overlap $s^{\mathbf i} f^{\mathbf i}_j$ thus uniquely resolves to $f^{\mathbf i}_j Y$, where $Y$ is the unique linear combination of irreducible paths (with respect to $R^{\mathbf i j}$) such that $[Y] \in \Bbbk \langle x^{\mathbf i j}_1, \dotsc, x^{\mathbf i j}_{N_{\mathbf i j}} \rangle / J^{\mathbf i j} \simeq \mathcal O_X (U_{\mathbf i j})$ equals the image of $[s^{\mathbf i}] = [\phi_{s^{\mathbf i}}] \in \mathcal O_X (U_{\mathbf i})$ under $\mathcal O_X (U_{\mathbf i}) \tikzto \mathcal O_X (U_{\mathbf i j})$. Similarly, $x^{\mathbf i}_r f^{\mathbf i}_k f^{\mathbf i_k}_j$ uniquely resolves to $f^{\mathbf i}_j f^{\mathbf i j}_k Z$, where $Z$ is the unique linear combination of irreducible paths (with respect to $R^{\mathbf i j k}$) such that $[Z]$ is the image of $x^{\mathbf i}_r$ under  $\mathcal O_X (U_{\mathbf i}) \tikzto \mathcal O_X (U_{\mathbf i j k})$.
\end{proof}

\begin{theorem}
\label{theorem:diagramalgebrapqr}
Let $X$ be any separated Noetherian scheme and let $Q^{\mathrm{diag}}$ and $R^{\mathrm{diag}}$ be as in Proposition \ref{proposition:diagram}. Then the L$_\infty$ algebra $\mathfrak p (Q^{\mathrm{diag}}, R^{\mathrm{diag}})$ controls the deformation theory of $\Qcoh (X)$ as Abelian category.
\end{theorem}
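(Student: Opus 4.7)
The plan is to combine three inputs: Proposition \ref{proposition:diagram}, which realizes the diagram algebra $\mathcal O_X \vert_{\mathfrak U}!$ as a path algebra $\Bbbk Q^{\mathrm{diag}}/J$ of a finite quiver modulo a finitely generated ideal, together with a finite reduction system $R^{\mathrm{diag}}$ satisfying $(\diamond)$ for $J$; Theorem \ref{theorem:BW1}, which attaches to any such presentation an L$_\infty$ algebra $\mathfrak p(Q, R)$ controlling the formal deformation theory of the associative algebra $\Bbbk Q/I$; and Theorem \ref{theorem:equivalence} of Lowen--Van den Bergh, which identifies the formal deformation theory of $\Qcoh(X)$ as an Abelian category with that of $\mathcal O_X\vert_{\mathfrak U}!$ as an associative algebra.

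First I would invoke Proposition \ref{proposition:diagram} to fix the presentation $\mathcal O_X \vert_{\mathfrak U}! \simeq \Bbbk Q^{\mathrm{diag}}/J$ and note that the reduction system $R^{\mathrm{diag}}$ constructed there satisfies $(\diamond)$ for $J$ (this is what the finiteness of $\mathfrak U$ and the Noetherian hypothesis buy us, via the existence of finite commutative Gröbner bases at each vertex, cf.\ Remark \ref{remark:finite}). Second, applying Theorem \ref{theorem:BW1} to $A = \mathcal O_X \vert_{\mathfrak U}!$ with this reduction system yields an L$_\infty$ quasi-isomorphism between $\mathfrak p(Q^{\mathrm{diag}}, R^{\mathrm{diag}})$ and the DG Lie algebra $\mathfrak h(\mathcal O_X\vert_{\mathfrak U}!)$ of shifted Hochschild cochains; since deformation functors depend only on the L$_\infty$ quasi-isomorphism class, this gives control of the deformation theory of $\mathcal O_X\vert_{\mathfrak U}!$ as an associative algebra.

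Third, I would transport this statement across the chain of equivalences in Theorem \ref{theorem:equivalence}, namely
\[
\mathrm{Def}(\mathcal O_X\vert_{\mathfrak U}!) \; \simeq \; \mathrm{Def}(\mathcal O_X\vert_{\mathfrak U}) \; \simeq \; \mathrm{Def}(\Qcoh(X)),
\]
to conclude that $\mathfrak p(Q^{\mathrm{diag}}, R^{\mathrm{diag}})$ also controls the formal deformation theory of the Abelian category $\Qcoh(X)$. The Noetherian hypothesis on $X$ also makes the analogous equivalence for $\coh(X)$ available, so the same L$_\infty$ algebra governs that deformation theory as well.

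Conceptually the argument is a direct assembly, so there is no serious obstacle once the three prior results are in hand; the only point that merits care is verifying that the identifications really do compose at the level of deformation functors (rather than merely at the level of $\HH^2$ and $\HH^3$). This is already provided by Theorem \ref{theorem:equivalence}, which gives an equivalence of formal deformation pseudofunctors on the category of Artin local $\Bbbk$-algebras and not just an isomorphism of tangent and obstruction spaces. Thus no further Maurer--Cartan comparison beyond that of Theorem \ref{theorem:BW1} is required, and the combinatorial description of the Maurer--Cartan elements via the star product $\star_{\phi + \widetilde\phi}$ and the reduced equation \eqref{mc} of Theorem \ref{theorem:higher-brackets} is automatically inherited.
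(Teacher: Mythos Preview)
Your proposal is correct and follows essentially the same approach as the paper: invoke Proposition \ref{proposition:diagram} and Theorem \ref{theorem:BW1} to see that $\mathfrak p(Q^{\mathrm{diag}}, R^{\mathrm{diag}})$ controls deformations of $\mathcal O_X\vert_{\mathfrak U}! \simeq \Bbbk Q^{\mathrm{diag}}/J$, then apply Theorem \ref{theorem:equivalence} to pass to $\Qcoh(X)$. Your additional remarks about why the equivalences compose at the level of deformation functors (rather than merely cohomologically) are correct but not strictly needed here, since this is already built into the statement of Theorem \ref{theorem:equivalence}.
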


\begin{proof}
It follows from Theorem \ref{theorem:BW1} and Proposition \ref{proposition:diagram} that $\mathfrak p(Q^{\mathrm{diag}}, R^{\mathrm{diag}})$ controls the deformation theory of the diagram algebra $\mathcal O_X \vert_{\mathfrak U}! \simeq \Bbbk Q^{\mathrm{diag}} / J$. The result then follows from Theorem \ref{theorem:equivalence}. 
\end{proof}

\begin{remark}
\label{remark:dinhlowen}
Dinh Van--Lowen \cite{dinhvanlowen} defined an L$_\infty$ algebra structure $\mathfrak{gs} (\mathcal O_X \vert_{\mathfrak U})$ on the Gersten\-haber--Schack complex by homotopy transfer from the DG Lie algebra $\mathfrak{h}(\mathcal O_X \vert_{\mathfrak U}!)$ (cf.~\S\ref{subsection:hochschild}). We thus have a zigzag of L$_\infty$ quasi-isomorphisms
\begin{align*}
\mathfrak p (Q^{\mathrm{diag}}, R^{\mathrm{diag}}) \tikzhookrightarrow \mathfrak{h}(\mathcal O_X \vert_{\mathfrak U}!)  \tikzhookleftarrow \mathfrak{gs}(\mathcal O_X \vert_{\mathfrak U})
\end{align*}
(cf.\ Remark \ref{remark:zigzag}) so that both $\mathfrak p (Q^{\mathrm{diag}}, R^{\mathrm{diag}})$ and $\mathfrak{gs} (\mathcal O_X \vert_{\mathfrak U})$ control the deformation theory of $\mathcal O_X \vert_{\mathfrak U}!$ (and thus the deformation theory of $\Qcoh (X)$).

Note however that $\mathfrak p (Q^{\mathrm{diag}}, R^{\mathrm{diag}})$ is practically always much smaller than $\mathfrak{gs}(\mathcal O_X \vert_{\mathfrak U})$. For instance, if $X = \mathbb A^d$ we may take $\mathfrak U = \{ X \}$ whence $\mathcal O_X \vert_{\mathfrak U}! \simeq \mathcal O_X (X) \simeq \Bbbk [x_1, \dotsc, x_d]$. The cochain complex underlying $\mathfrak p (Q^{\mathrm{diag}}, R^{\mathrm{diag}})$ has trivial differential and is therefore already isomorphic to $\HH^\hdot (X) \simeq \H^0 (\Lambda^\hdot \mathcal T_{\mathbb A^d})$ (see \cite[Lem.~10.21]{barmeierwang}), whereas the Gersten\-haber--Schack complex is the full Hochschild cochain complex for $\Bbbk [x_1, \dotsc, x_d]$. In this case deformations of $\Qcoh (\mathbb A^d)$ correspond to deformation quantizations of algebraic Poisson structures on $\mathbb A^d$ which is a difficult problem \cite{kontsevich1} and a formal solution does not necessarily give a handle on the problem of finding algebraizations. Working with $\mathfrak p (Q^{\mathrm{diag}}, R^{\mathrm{diag}})$ one can obtain explicit quantizations \cite[\S 10]{barmeierwang} which can be used to obtain (non-formal) strict deformation quantizations of algebraic Poisson structures \cite{barmeierschmitt}.

Now, if $X$ is a projective hypersurface, the cochain complex underlying $\mathfrak p(Q^{\mathrm{diag}}, R^{\mathrm{diag}})$ coincides with the BACH complex (see \S\ref{subsection:hypersurfaces}) at each vertex of the diagram instead of the Hochschild cochain complex. The BACH complex was also used by L.~Liu and W.~Lowen \cite{liulowen} to study the Hochschild cohomology of projective hypersurfaces $X$ which can be viewed as first-order deformations of $\Qcoh (X)$ up to equivalence. The L$_\infty$ algebra $\mathfrak p (Q^{\mathrm{diag}}, R^{\mathrm{diag}})$ thus gives the natural higher structure to use the BACH complex to study the full formal deformation theory of $\Qcoh (X)$ and moreover gives a route to study algebraizations.
\end{remark}

\subsubsection{Algebraization}
\label{subsubsection:algebraization}

Using the same idea as in Proposition \ref{proposition:Groebner}, we can use the admissible order on the algebra $\mathcal O_X (U)$ at each vertex of $Q^{\mathrm{diag}}$ to give a sufficient criterion for the existence of an algebraization of a formal deformation of the diagram algebra $\mathcal O_X \vert_{\mathfrak U}!$.

Recall that $R^{\mathrm{diag}} = R_0 \cup R_1 \cup R_2$ is obtained by gluing the commutative Gröbner basis with respect to some admissible order $\prec$ at each vertex. Let $S_0 = \{ s \}_{(s, \phi_s) \in R_0}$. Then we have the following result.

\begin{proposition}
\label{proposition:diagramalgebraization}
Let $\widetilde \phi \in \Hom (\Bbbk S, \Bbbk Q^{\mathrm{diag}} / J) \otimes (t)$ be a Maurer--Cartan element of the L$_\infty$ algebra $\mathfrak p(Q^{\mathrm{diag}}, R^{\mathrm{diag}}) \hatotimes (t)$ such that 
$\widetilde \phi(s) \prec s$ for each $s \in S_0$. Then $(\mathcal O_X \vert_{\mathfrak U}! \llrr{t}, \star_{\phi + \widetilde \phi})$ admits $(\mathcal O_X \vert_{\mathfrak U}! [t], \star_{\phi + \widetilde \phi})$ as an algebraization.
\end{proposition}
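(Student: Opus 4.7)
The plan is to extend the local admissible orders $\prec$ at each vertex $\mathbf i$ to a global admissible order $\prec^{\mathrm{diag}}$ on $Q^{\mathrm{diag}}_\ldot$ with three properties: (a) it restricts to $\prec$ on paths supported at a single vertex; (b) every reduction in $R^{\mathrm{diag}}$ is $\prec^{\mathrm{diag}}$-decreasing, so that $R^{\mathrm{diag}}$ is a noncommutative Gröbner basis with respect to $\prec^{\mathrm{diag}}$; and (c) for every $s \in S^{\mathrm{diag}}$ we have $\widetilde \phi(s) \prec^{\mathrm{diag}} s$. Once such an order is in place, Proposition \ref{proposition:Groebner} immediately yields the algebraization.

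To construct $\prec^{\mathrm{diag}}$ I would proceed as follows. Since $\widetilde \phi \in \Hom(\Bbbk S, \mathcal O_X \vert_{\mathfrak U}!) \otimes (t)$ is a polynomial in $t$ with coefficients in the diagram algebra, each image $\widetilde \phi(s)$ is a finite linear combination of irreducible paths, so there exists a uniform integer $D$ bounding both the loop-degree of every $\widetilde \phi(s)$ and of every restriction $X^{\mathbf i, j}_r$ appearing in $R_1$. Assign weights $w_{\mathbf i}$ to each loop-generator $x^{\mathbf i}_r$ recursively in $\lvert \mathbf i \rvert$ decreasing, so that $w_{\mathbf i} > D \cdot \max_{\mathbf i j} w_{\mathbf i j}$, and assign weight zero to every arrow $f^{\mathbf i}_j$. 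Define $\prec^{\mathrm{diag}}$ on paths by comparing first their arrow signatures in lex order (with the arrow order $f^{\mathbf i}_k \succ f^{\mathbf i}_j$ for $j < k$), then their total loop weight, and finally by a lex letter order that agrees with $\prec^{\mathbf i}$ on loops at $\mathbf i$. Admissibility reduces to standard compatibility of lex and weighted orders with concatenation, since paths sharing source and target have arrow signatures of the same length.

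Verifying (a)--(c) is then routine. The $R_0$ reductions live at a single vertex with matching arrow signatures and weights, and decrease by $\prec^{\mathbf i}$. The $R_1$ reductions $x^{\mathbf i}_r f^{\mathbf i}_j \rightsquigarrow f^{\mathbf i}_j X^{\mathbf i, j}_r$ preserve the arrow signature but strictly decrease total loop weight from $w_{\mathbf i}$ to at most $D \cdot w_{\mathbf i j}$. The $R_2$ reductions $f^{\mathbf i}_k f^{\mathbf i k}_j \rightsquigarrow f^{\mathbf i}_j f^{\mathbf i j}_k$ strictly decrease the arrow signature in lex, since $j<k$. For the degree condition on $\widetilde \phi$: for $s \in S_0$ it is the hypothesis; for $s \in S_2$, $\widetilde \phi(s)$ is supported on irreducible paths of arrow signature $(j,k) \prec (k,j)$; for $s \in S_1$, $\widetilde \phi(s)$ has the same arrow signature as $s$ but its loop-part lives at vertex $\mathbf i j$ with total weight at most $D \cdot w_{\mathbf i j} < w_{\mathbf i}$. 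The main obstacle is the self-consistent choice of the weights $w_{\mathbf i}$ across the whole acyclic backbone of $Q^{\mathrm{diag}}$ and the careful verification that the three tie-breaking layers combine into a genuine admissible order; both succeed precisely because the cover $\mathfrak U$ is finite and $\widetilde \phi$, together with the finitely many restriction maps, has uniformly bounded loop-degree, so that Proposition \ref{proposition:Groebner} applies.
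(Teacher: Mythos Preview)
Your approach is genuinely different from the paper's. The paper does not construct a global admissible order on $Q^{\mathrm{diag}}_\ldot$ at all; its one-sentence proof simply asserts that, because $\widetilde\phi(s)$ is polynomial in $t$ and the degree condition $\widetilde\phi(s) \prec s$ holds on $S_0$, the right-most reduction process defining $\star_{\phi+\widetilde\phi}$ is a finite sum and hence already makes sense on $\mathcal O_X\vert_{\mathfrak U}![t]$. The implicit termination argument uses the structure of $R^{\mathrm{diag}}$ directly: the acyclic backbone bounds how often $R_1$- and $R_2$-type reductions can fire, and at each vertex the local well-order $\prec^{\mathbf i}$ controls the $R_0$-type reductions. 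This is the mechanism behind Proposition~\ref{proposition:Groebner}, invoked as an idea rather than literally through a single admissible order.

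Your strategy of packaging everything into one admissible order $\prec^{\mathrm{diag}}$ and then applying Proposition~\ref{proposition:Groebner} verbatim is appealing, but as written it has a real gap at step~(a). Your second layer compares paths supported at a single vertex $\mathbf i$ by total loop weight, which is $w_{\mathbf i}\cdot(\text{path length})$, so the restriction of $\prec^{\mathrm{diag}}$ to that vertex is degree-then-lex. The local orders $\prec^{\mathbf i}$, however, are arbitrary admissible orders coming from a Gr\"obner basis and need not be degree-compatible (pure lex is allowed). Thus the hypothesis $\widetilde\phi(s) \prec^{\mathbf i} s$ for $s\in S_0$ does not imply $\widetilde\phi(s) \prec^{\mathrm{diag}} s$; your sentence ``the $R_0$ reductions \dots\ with matching arrow signatures and weights'' is exactly where this fails, since $\widetilde\phi(s)$ may well be \emph{longer} than $s$ while still being $\prec^{\mathbf i}$-smaller. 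The naive repair of using $\prec^{\mathbf i}$ itself as a tie-break after weight breaks compatibility with concatenation once you multiply by paths containing $f$-arrows. Building a single admissible order that simultaneously restricts to each $\prec^{\mathbf i}$ and strictly decreases under $R_1$ is genuinely delicate, which is presumably why the paper sidesteps the issue and argues finiteness of $\star_{\phi+\widetilde\phi}$ directly.
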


\begin{proof}
This follows from the same idea as Proposition \ref{proposition:Groebner}, namely if $\widetilde \phi (s)$ is polynomial in $t$ for each $s \in S$ and $\widetilde \phi$ satisfies the degree condition $\widetilde \phi (s) \prec s$ for all $s \in S_0$, then $\star_{\phi + \widetilde \phi}$ is a finite sum and is thus already well defined on $\mathcal O_X \vert_{\mathfrak U}! [t]$.
\end{proof}

This can be generalized to any complete local Noetherian $\Bbbk$-algebra $(B, \mathfrak m)$.

\subsubsection{An example in dimension $1$}

In the following example we show how to calculate the family of deformations of a (smooth) genus $3$ curve using the deformation theory of $\mathcal O_X \vert_{\mathfrak U}! \simeq \Bbbk Q^{\mathrm{diag}} / J$ for $Q^{\mathrm{diag}}$ and $J$ as in Proposition \ref{proposition:diagram}. Of course the deformation theory of smooth projective curves is one of the best understood examples, but we provide the example as a point of reference to illustrate which shape the deformation theory of $\Qcoh (X)$ takes when computed as deformations of $\mathcal O_X \vert_{\mathfrak U}! \simeq \Bbbk Q^{\mathrm{diag}} / J$. (Note that curves of genus $\geq 1$ do not admit tilting bundles, and so the usually more economical approach via tilting bundles studied in \S\ref{subsection:tilting} does not apply here.)

\begin{example}
\label{example:genus3}
{\it Deformations of a genus $3$ curve.}
Consider the smooth genus $3$ curve $X \subset \mathbb P^2 = \{ [x_0, x_1, x_2] \}$ cut out by the quartic equation
\[
F = x_0^3 x_1 + x_1^3 x_2 + x_2^4 = 0.
\]
Note that the point $[0, 0, 1]$ does not lie in $X$ so that $X \subset \mathbb P^2 \setminus \{ [0, 0, 1] \} = U_0 \cup U_1$ where
\begin{align*}
U_0 &= \{ [x_0, x_1, x_2] \mid x_0 \neq 0 \} = \{ [1, z, u] \} \simeq \Bbbk^2 \\
U_1 &= \{ [x_0, x_1, x_2] \mid x_1 \neq 0 \} = \{ [\zeta, 1, v] \} \simeq \Bbbk^2.
\end{align*}
Let $U, V$ denote the restrictions of $U_0, U_1$ to $X$. Setting $\mathfrak U = \{ U, V, U \cap V \}$ we can follow the proof of Proposition \ref{proposition:diagram} and write the diagram algebra $\mathcal O_X \vert_{\mathfrak U}!$ as the path algebra of the following quiver
\[
\begin{tikzpicture}[baseline=-2.75pt,x=3.75em,y=1em]
\draw[line width=1pt, fill=black] (-1,0) circle(0.2ex);
\draw[line width=1pt, fill=black] (0,0) circle(0.2ex);
\draw[line width=1pt, fill=black] (1,0) circle(0.2ex);
\node[shape=circle, scale=0.7](L) at (-1,0) {};
\node[shape=circle, scale=0.7](M) at (0,0) {};
\node[shape=circle, scale=0.7](R) at (1,0) {};
\node[shape=circle, scale=0.9](LL) at (-1,0) {};
\path[->,line width=.4pt,font=\scriptsize, looseness=16, in=125, out=55]
(L.70) edge node[above=-.2ex] {$z$} (L.110)
;
\path[->,line width=.4pt,font=\scriptsize, looseness=16, in=305, out=235]
(L.250) edge node[below=-.2ex] {$u$} (L.290)
;
\path[->,line width=.4pt,font=\scriptsize, looseness=16, in=165, out=95, transform canvas={yshift=1pt}]
(M.110) edge node[above=-.2ex] {$x$} (M.150)
;
\path[<-,line width=.4pt,font=\scriptsize, looseness=16, in=85, out=15, transform canvas={yshift=1pt}]
(M.30) edge node[above=-.2ex] {$y$} (M.70)
;
\path[->,line width=.4pt,font=\scriptsize, looseness=16, in=305, out=235]
(M.250) edge node[below=-.2ex] {$w$} (M.290)
;
\path[<-,line width=.4pt,font=\scriptsize, looseness=16, in=125, out=55]
(R.70) edge node[above=-.2ex] {$\zeta$} (R.110)
;
\path[<-,line width=.4pt,font=\scriptsize, looseness=16, in=305, out=235]
(R.250) edge node[below=-.2ex] {$v$} (R.290)
;
\path[->,line width=.4pt,font=\scriptsize]
(L) edge node[above=-.2ex] {$f$} (M)
;
\path[->,line width=.4pt,font=\scriptsize]
(R) edge node[above=-.2ex] {$g$} (M)
;
\begin{scope}[yshift=-3.25em]
\node[font=\footnotesize] (U) at (-1,0) {$U$};
\node[font=\footnotesize] (UV) at (0,0) {$U \cap V$};
\node[font=\footnotesize] (V) at (1,0) {$V$};
\end{scope}
\end{tikzpicture}
\]
with ideal of relations $J$ generated by $s - \varphi_s$ for $(s, \varphi_s) \in R^{\mathrm{diag}}$ where $R^{\mathrm{diag}}$ is the reduction system consisting of the following pairs
\begin{itemize}[itemsep=.25em]
\item $(uz, zu)$, $(v \zeta, \zeta v)$, $(w x, xw)$, $(w y, y w) \phantom{^4}$ \hfill {\it commutativity of charts}
\item $(z f, f x)$, $(u f, f w)$, $(\zeta g, g y)$, $(v g, g y w)  \phantom{^4}$ \hfill {\it compatibility with morphisms}
\item $(xy, 1)$, $(yx, 1) \phantom{^4}$ \hfill {\it mutually inverse coordinates}
\item $(u^4, -z^3 u - z)$, $(w^4, -x^3 w - x)$, $(v^4, -\zeta^3 - v)$ \hfill {\it equation of curve}
\end{itemize}
which can be obtained from the admissible orders extending $z \prec u$, $\zeta \prec v$ and $x \prec y \prec w$ at the vertices $Q^{\mathrm{diag}}$ corresponding to the open sets $U, V, U \cap V$, respectively. Note that we have
\[
S = \{ uz, v \zeta, wx, wy, xy, yx, z f, u f, \zeta g, v g, u^4, v^4, w^4 \}.
\]
To show that $R^{\mathrm{diag}}$ satisfies ($\diamond$) for $J$  it suffices to show that the overlaps $u z f$, $u^4 f$, $v \zeta g$, $v^4 g$, $xyx$, $yxy$, $wxy$, $wyx$, $w^4 x$, $w^4 y$, $u^4 z$, $v^4 \zeta$ in $S_3$ are resolvable. For each overlap this is a short and straightforward computation, for example
\[
\begin{tikzpicture}[baseline=-2.6pt,description/.style={fill=white,inner sep=1.75pt}]
\matrix (m) [matrix of math nodes, row sep={2.7em,between origins}, text height=1.5ex, column sep={2.7em,between origins}, text depth=0.25ex, ampersand replacement=\&]
{
\& u z f \& \\
z u f \&\& u f x \\
z f w \&\& f w x \\
\& f x w \& \\
};
\path[|->,line width=.4pt]
(m-1-2) edge (m-2-1)
(m-1-2) edge (m-2-3)
(m-2-1) edge (m-3-1)
(m-2-3) edge (m-3-3)
(m-3-1) edge (m-4-2)
(m-3-3) edge (m-4-2)
;
\end{tikzpicture}
\qquad
\begin{tikzpicture}[baseline=-2.6pt,description/.style={fill=white,inner sep=1.75pt}]
\matrix (m) [matrix of math nodes, row sep={2.7em,between origins}, text height=1.5ex, column sep={2.7em,between origins}, text depth=0.25ex, ampersand replacement=\&]
{
\& w^4 y \& \\
y w^4 \&\& -x^3 w y - xy \\
\mathllap{-y} x^3 w - y x \&\& -x^3 y w - 1 \\
\& -x^2 w - 1 \& \\
};
\path[|->,line width=.4pt]
(m-1-2) edge (m-2-1)
(m-1-2) edge (m-2-3)
(m-2-1) edge (m-3-1)
(m-2-3) edge (m-3-3)
(m-3-1) edge (m-4-2)
(m-3-3) edge (m-4-2)
;
\end{tikzpicture}
\quad
\begin{tikzpicture}[baseline=-2.6pt,description/.style={fill=white,inner sep=1.75pt}]
\matrix (m) [matrix of math nodes, row sep={2.7em,between origins}, text height=1.5ex, column sep={2.7em,between origins}, text depth=0.25ex, ampersand replacement=\&]
{
\& w x y \& \\
\&\& x w y \\
\&\& x y w \\
\& w. \& \\
};
\path[|->,line width=.4pt]
(m-1-2) edge (m-2-3)
(m-1-2) edge (m-4-2)
(m-2-3) edge (m-3-3)
(m-3-3) edge (m-4-2)
;
\end{tikzpicture}
\]

Now let $\widetilde \varphi \in \Hom (\Bbbk S, \Bbbk Q^{\mathrm{diag}} / J)$ be any $2$-cochain and as usual we denote $\widetilde \varphi_s = \widetilde \varphi (s) \in \Bbbk \Irr_S$. Let $F_U$, $F_V$ and $F_W$ denote the restrictions of $F$ to the charts $U_0$, $U_1$ and $U_{01} = U_0 \cap U_1$ of $\mathbb P^2$, i.e.\
\begin{align*}
F_U = u^4 + z^3 u + z, \quad F_V = v^4 + \zeta^3 + v,\quad \text{and} \quad F_W  = w^4 + x^3 w + x.
\end{align*}

In order for $\widetilde \varphi$ to satisfy the Maurer--Cartan equation, we need only check that \eqref{mc} holds for the elements in $S_3$ giving the conditions
\begin{equation}
\label{curvecocycle1}
\widetilde \varphi_{wx} = \widetilde \varphi_{wy} = \widetilde \varphi_{uz} = \widetilde \varphi_{v \zeta} = 0 \qquad \text{and} \qquad \widetilde \varphi_{xy} = \widetilde \varphi_{yx}
\end{equation}
as well as
\begin{equation}
\label{curvecocycle2}
\begin{aligned}
\widetilde \varphi_{v^4} g &  = g y^4 \widetilde \varphi_{w^4} + \frac{\partial F_V}{\partial \zeta} \widetilde \varphi_{\zeta g} +   \frac{\partial F_V}{\partial v} \widetilde \varphi_{v g} - g \frac{\partial F_W}{\partial x} y^3\widetilde \varphi_{yx}\\
\widetilde \varphi_{u^4} f & = f \widetilde \varphi_{w^4} + \frac{\partial F_U}{\partial u} \widetilde \varphi_{u f} + \frac{\partial F_U}{\partial z} \widetilde \varphi_{z f}.
\end{aligned}
\end{equation}
Here (\ref{curvecocycle1}) signifies that the commutativity relations of the charts should not be changed and (\ref{curvecocycle2}) imposes compatibility conditions between changing the morphisms and changing the equation of $X$ in the individual charts.

We may first deform the individual algebra $\Bbbk [\zeta, v] / (F_V) $ in the chart $V$ by setting
\[
\widetilde \varphi_{v^4} = \lambda_1 + \lambda_2 \zeta + \lambda_3 v + \lambda_4 \zeta v + \lambda_5 v^2 + \lambda_6 \zeta v^2
\]
since we have
\[
\Bbbk [\zeta, v] \bigm/ \bigl(\tfrac{\partial F_V}{\partial \zeta}, \tfrac{\partial F_V}{\partial v} \bigr) = \Bbbk [\zeta, v] / (4 v^3 + 1, 3 \zeta^2).
\]
Then setting $\widetilde \varphi_{z f} = \widetilde \varphi_{u f} = \widetilde \varphi_{\zeta g} = \widetilde \varphi_{v g} = \widetilde \varphi_{yx} = \widetilde \varphi_{xy} = 0$ and solving (\ref{curvecocycle2}) giving
\begin{align*}
\widetilde \varphi_{w^4} &= \eqmakebox[l1]{$\lambda_1 x^4$} + \eqmakebox[l2]{$\lambda_2 x^3$} + \eqmakebox[l3]{$\lambda_3 x^3 w$} + \eqmakebox[l4]{$\lambda_4 x^3 w$} + \eqmakebox[l5]{$\lambda_5 x^2 w^2$} + \eqmakebox[l6]{$\lambda_6 x w^2$} \\
\widetilde \varphi_{u^4} &= \eqmakebox[l1]{$\lambda_1 z^4$} + \eqmakebox[l2]{$\lambda_2 z^3$} + \eqmakebox[l3]{$\lambda_3 z^3 u$} + \eqmakebox[l4]{$\lambda_4 z^3 u$} + \eqmakebox[l5]{$\lambda_5 z^2 u^2$} + \eqmakebox[l6]{$\lambda_6 z u^2$}. 
\end{align*}
(Note that $\widetilde \phi$ satisfies $\widetilde \phi (s) \prec s$ for all $s \in S_0$, so we may view $\lambda_1, \dotsc, \lambda_6$ directly as ``actual'' rather than formal parameters, cf.\ Proposition \ref{proposition:diagramalgebraization}.)

Note that for a genus $g$ curve $C$ one has $\HH^2 (C) \simeq \H^1 (\mathcal T_C) \simeq \Bbbk^{3g-3}$. Here $X$ is a genus $3$ curve and indeed we obtain a $6$-dimensional family of nontrivial ``actual'' deformations of $X$ parametrized by $\lambda = (\lambda_1, \dotsc, \lambda_6) \in \Bbbk^6$ with fibres $X_\lambda = \{ F_\lambda = 0 \} \subset \mathbb P^2$, where 
$$
F_{\lambda} = x_0^3 x_1 + x_2^4 + \lambda_1 x_1^4 + \lambda_2 x_0 x_1^3 + (1 + \lambda_3) x_1^3 x_2 + \lambda_4 x_0 x_1^2 x_2 + \lambda_5 x_1^2 x_2^2 + \lambda_6 x_0 x_1 x_2^2.
$$
\end{example}

This example shows that the deformation theory of $\Qcoh (X)$ can be studied rather explicitly. (See \S\ref{subsection:zkdiagram} for the case $X =  \Tot \mathcal O_{\mathbb P^1} (-k)$.) Indeed, each element $(s, \varphi_s)$ in the reduction system has a clear geometric meaning: it corresponds either to a commutativity relation of the local coordinates, to the identification of coordinates across charts, to the defining equations, or to the commutativity of the coordinate changes across charts. That is, the geometric meaning of the modifications to the reduction system is very much transparent.

Note that in general, a deformation of $\mathcal O_X \vert_{\mathfrak U}$ may be a not necessarily commutative diagram of not necessarily commutative associative algebras, so it is natural to look at deformations of $\mathcal O_X \vert_{\mathfrak U}!$ in this general context of path algebras of quivers with relations.

\subsection{Deformations via tilting bundles}
\label{subsection:tilting}

By Theorem \ref{theorem:equivalence} deformations of the Abelian category $\Qcoh (X)$ admit a description as deformations of the diagram algebra $\mathcal O_X \vert_{\mathfrak U}!$ which by Proposition \ref{proposition:diagram} can be written as $\Bbbk Q^{\mathrm{diag}} / J$.

In case the variety or scheme admits a tilting bundle (e.g.\ projective spaces \cite{beilinson}, Grassmannians, quadrics \cite{kapranov1,kapranov2}, rational surfaces \cite{hilleperling}, hypertoric varieties \cite{spenkovandenbergh}) --- for example the tilting bundle obtained from a strong full exceptional collection --- there is a much more economical description of this deformation theory by means of a smaller quiver: we have
\[
\End \mathcal E \simeq \Bbbk Q^{\mathrm{tilt}} / I
\]
for some finite quiver $Q^{\mathrm{tilt}}$ and some ideal of relations $I$. Here $Q^{\mathrm{tilt}}$ is constructed by putting a vertex for each direct summand of $\mathcal E$ and adding arrows to generate the morphism spaces between the direct summands.

The tilting bundle $\mathcal E$ induces a {\it derived} equivalence between the Abelian category of quasi-coherent sheaves on $X$ and the Abelian category of right modules for the endomorphism algebra $\Bbbk Q^{\mathrm{tilt}} / I \simeq \End \mathcal E$
\begin{align}
\label{derivedequivalence}
\D (\Mod (\End \mathcal E)) &\simeq \D (\Qcoh (X)) \\
\intertext{given by the functors $\mathbf R \mathrm{Hom} (\mathcal E{,} \blank)$ and $\blank \otimes^{\mathbf L} \mathcal E$ and (\ref{derivedequivalence}) induces an isomorphism of Hochschild cohomologies}
\HH^\hdot (\End \mathcal E) &\simeq \HH^\hdot (X) \notag.
\end{align}
(See for example \cite{baer,bondal,buchweitzhille,hillevandenbergh}.)

\begin{theorem}
\label{theorem:tiltingbundlepqr}
Let $X$ be a separated Noetherian scheme and let $\mathcal E$ be a tilting bundle on $X$. Write $\End \mathcal E = \Bbbk Q^{\mathrm{tilt}} / I$ and let $R^{\mathrm{tilt}}$ be any reduction system satisfying \textup{($\diamond$)} for $I$. Then $\mathfrak p (Q^{\mathrm{tilt}}, R^{\mathrm{tilt}})$ controls the deformation theory of the Abelian category $\Qcoh (X)$.
\end{theorem}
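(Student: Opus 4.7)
The plan is to chain together three equivalences of formal deformation theories, reducing the statement to the results already established.

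\textbf{Step 1: Apply Theorem \ref{theorem:BW1} to $\End \mathcal E$.} Since $\End \mathcal E \simeq \Bbbk Q^{\mathrm{tilt}}/I$ is a path algebra of a finite quiver with relations and $R^{\mathrm{tilt}}$ satisfies ($\diamond$) for $I$, Theorem \ref{theorem:BW1} directly yields an L$_\infty$ quasi-isomorphism $\mathfrak p(Q^{\mathrm{tilt}}, R^{\mathrm{tilt}}) \to \mathfrak h(\End \mathcal E)$. Hence $\mathfrak p(Q^{\mathrm{tilt}}, R^{\mathrm{tilt}})$ controls the deformation theory of $\End \mathcal E$ as an associative $\Bbbk$-algebra.

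\textbf{Step 2: Pass from the algebra to its module category.} By the Lowen--Van den Bergh equivalence \eqref{eq}, deformations of $\End \mathcal E$ as an associative algebra correspond to deformations of $\Mod(\End \mathcal E)$ as an Abelian category, and both are controlled by the Hochschild DG Lie algebra $\mathfrak h(\End \mathcal E)$ (hence by $\mathfrak p(Q^{\mathrm{tilt}}, R^{\mathrm{tilt}})$).

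\textbf{Step 3: Transfer along the tilting equivalence.} The tilting bundle $\mathcal E$ induces a derived equivalence $\D(\Qcoh(X)) \simeq \D(\Mod(\End \mathcal E))$ as recalled in (\ref{derivedequivalence}), and we invoke the Lowen--Van den Bergh comparison results \cite{lowenvandenbergh1,lowenvandenbergh2} together with the derived-Morita invariance of the Hochschild structure to conclude that the deformation theory of $\Qcoh(X)$ as an Abelian category is equivalent (as an L$_\infty$ deformation problem, not merely at the level of $\HH^2$ and $\HH^3$) to the deformation theory of $\Mod(\End \mathcal E)$. Concretely, because $\mathcal E$ is a tilting bundle, $\mathbf R\mathrm{Hom}(\mathcal E, \blank)$ sends a compact generator of $\D(\Qcoh(X))$ with vanishing higher self-$\Ext$ to the free module of rank one, so the induced map $\HH^\hdot(X) \to \HH^\hdot(\End \mathcal E)$ lifts to an L$_\infty$ isomorphism of the Hochschild complexes (respectively, of the complexes computing abelian Hochschild cohomology of $\Qcoh(X)$).

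Combining the three steps with Theorem \ref{theorem:equivalence} (which identifies the deformation theories of $\Qcoh(X)$, $\Mod(\mathcal O_X)$ and $\coh(X)$ for Noetherian $X$) gives the desired statement. The main obstacle is Step 3: the derived equivalence coming from tilting is \emph{a priori} only an equivalence of triangulated categories and yields an isomorphism of Hochschild cohomology groups, whereas we need an equivalence of the underlying L$_\infty$ deformation theories. This is precisely where the special properties of a tilting bundle (compact generation together with $\Ext^{>0}(\mathcal E, \mathcal E) = 0$) are essential, since they ensure that the Hochschild complex of $\End \mathcal E$ computes the abelian Hochschild cohomology of $\Qcoh(X)$ as an L$_\infty$ algebra, and not merely as a graded vector space.
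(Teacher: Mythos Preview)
Your proposal is correct and follows essentially the same route as the paper. The paper's proof cites Hille--Van den Bergh \cite[Thm.~7.6]{hillevandenbergh} for the derived equivalence and then invokes Lowen--Van den Bergh \cite[Thm.~6.1]{lowenvandenbergh1} to obtain a B$_\infty$ (hence L$_\infty$) quasi-isomorphism between $\mathrm C^\hdot(\End\mathcal E,\End\mathcal E)$ and the abelian Hochschild complex $\mathrm C^\hdot_{\mathrm{Ab}}(\Qcoh(X))$ --- this is precisely the result that resolves what you correctly flagged as the ``main obstacle'' in Step~3, and it bypasses your intermediate Step~2 by comparing $\mathfrak h(\End\mathcal E)$ directly with $\mathfrak{lv}(\Qcoh(X))$.
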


\begin{proof}
It follows from Hille--Van den Bergh \cite[Thm.~7.6]{hillevandenbergh} that the derived tensor functor $\blank \otimes^{\mathbf L} \mathcal E$ induces an equivalence between $\D (\Mod (\End \mathcal E))$ and $\D (\Qcoh (X))$. Then by Lowen--Van den Bergh \cite[Thm.~6.1]{lowenvandenbergh1} there is a B$_\infty$ quasi-isomorphism between $\mathrm C^\hdot (\End \mathcal E, \End \mathcal E)$ and $\mathrm C^\hdot_{\mathrm{Ab}} (\Qcoh (X))$ which in particular gives an L$_\infty$ quasi-isomorphism between $\mathfrak h (\End \mathcal E)$ and $\mathfrak{lv} (\Qcoh (X))$, where the latter denotes the DG Lie algebra structure on $\mathrm C^{\hdot + 1}_{\mathrm{Ab}} (\Qcoh (X))$ controlling the deformation theory of $\Qcoh (X)$ introduced in \cite{lowenvandenbergh1}. It now follows from the L$_\infty$ quasi-isomorphism in Theorem \ref{theorem:BW1} that $\mathfrak p (Q^{\mathrm{tilt}}, R^{\mathrm{tilt}})$ controls the deformation theory of $\Qcoh (X)$.
\end{proof}

\begin{remark}
\label{remark:zigzag}
Collecting the various DG Lie and L$_\infty$ algebras which all control the deformation theory of $\Qcoh (X)$ in some way, we have the following natural zigzags of L$_\infty$ quasi-isomorphisms 
\begin{equation}
\label{zigzag}
\begin{tikzpicture}[baseline=-2.6pt,description/.style={fill=white,inner sep=1.75pt}]
\matrix (m) [matrix of math nodes, row sep=0em, text height=1.5ex, column sep=1.5em, row sep=1.5em, text depth=0.25ex, ampersand replacement=\&]
{
\& \mathfrak{gs} (\mathcal O_X \vert_{\mathfrak U}) \& \mathfrak{lv} (\Qcoh (X)) \\
\mathfrak p (Q^{\mathrm{diag}}, R^{\mathrm{diag}}) \& \mathfrak h (\mathcal O_X \vert_{\mathfrak U}!) \& \mathfrak{lv} (\Mod (\mathcal O_X \vert_{\mathfrak U}!)) \\
\mathfrak p (Q^{\mathrm{tilt}}, R^{\mathrm{tilt}}) \& \mathfrak h (\End \mathcal E) \& \mathfrak{lv} (\Mod (\End \mathcal E)) \\
};
\path[->,line width=.4pt,font=\scriptsize]
(m-1-2) edge (m-2-2)
(m-1-3) edge (m-2-3)
(m-2-1) edge (m-2-2)
(m-2-2) edge (m-2-3)
(m-2-3) edge (m-3-3)
(m-3-1) edge (m-3-2)
(m-3-2) edge (m-3-3)
;
\end{tikzpicture}
\end{equation}
where the two left horizontal arrows are given in Theorem \ref{theorem:BW1}, the middle vertical arrow was constructed by Dinh Van--Lowen \cite{dinhvanlowen} (cf.\ Remark \ref{remark:dinhlowen}) and the other arrows follow from Lowen--Van den Bergh \cite{lowenvandenbergh1}.

The advantage of the two L$_\infty$ algebras $\mathfrak p (Q^*, R^*)$ for $\ast \in \{ \mathrm{diag}, \mathrm{tilt} \}$ on the left-hand side is that their underlying complexes are usually much smaller. These L$_\infty$ algebras control the full deformation theory of $\Qcoh (X)$, but at the same time the deformations are often possible to construct explicitly (even by hand) by using the combinatorial criterion for the Maurer--Cartan equation given in Theorem \ref{theorem:higher-brackets}.
\end{remark}

To complete the picture, we note that the derived equivalence \eqref{derivedequivalence} obtained by tilting can be extended to any formal deformation.

\begin{proposition}
\label{proposition:derived}
Let $X$ be any separated Noetherian scheme, let $\mathfrak U$ be an affine open cover of $X$ closed under intersections and let $\mathcal E$ be a tilting bundle on $X$. Then the derived equivalence $\D (\Qcoh (X)) \simeq \D (\Mod (\End \mathcal E))$ lifts to any formal deformation.

More precisely, let $(B, \mathfrak m)$ be any complete local Noetherian $\Bbbk$-algebra, let $\widetilde \Phi$ be a Maurer--Cartan element of $\mathfrak p (Q^{\mathrm{diag}}, R^{\mathrm{diag}}) \hatotimes \mathfrak m$ and let $\widetilde \phi$ be the corresponding Maurer--Cartan element of $\mathfrak p (Q^{\mathrm{tilt}}, R^{\mathrm{tilt}}) \hatotimes \mathfrak m$. Then we have a triangulated equivalence
\[
\D (\Qcoh ((\mathcal O_X \vert_{\mathfrak U}! \hatotimes B, \star_{\Phi + \widetilde \Phi}))) \simeq \D (\Mod ((\End \mathcal E \hatotimes B, \star_{\phi + \widetilde \phi}))).
\]
\end{proposition}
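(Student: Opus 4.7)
The plan is to deform the tilting equivalence along the formal deformation by lifting $\mathcal E$ to a tilting object in the deformed Abelian category and then invoking the Hille--Van den Bergh tilting theorem in the $B$-linear setting. The starting observation is that the zigzag \eqref{zigzag} of L$_\infty$ quasi-isomorphisms ensures that all four L$_\infty$ algebras which control deformations of $\Qcoh (X)$ have canonically identified sets of gauge equivalence classes of Maurer--Cartan elements. In particular, $\widetilde \Phi$ determines up to gauge a Maurer--Cartan element of $\mathfrak{lv} (\Qcoh (X)) \hatotimes \mathfrak m$, which by Lowen--Van den Bergh \cite{lowenvandenbergh1,lowenvandenbergh2} corresponds to a flat formal deformation $\widetilde{\mathcal C}$ of the Abelian category $\Qcoh (X)$; this $\widetilde{\mathcal C}$ is in turn equivalent to $\Qcoh ((\mathcal O_X \vert_{\mathfrak U}! \hatotimes B, \star_{\Phi + \widetilde \Phi}))$ by Theorem \ref{theorem:equivalence}.

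Next I would lift $\mathcal E$ to a tilting object $\widetilde{\mathcal E} \in \D (\widetilde{\mathcal C})$. Since $\mathcal E$ is tilting, $\Ext^{>0} (\mathcal E, \mathcal E) = 0$, so the obstructions to inductively lifting $\mathcal E$ from $B / \mathfrak m^n$ to $B / \mathfrak m^{n+1}$ (which live in $\Ext^2$) vanish and the inverse limit $\widetilde{\mathcal E}$ exists as a flat lift. Ext-vanishing and the generating property persist by reduction modulo $\mathfrak m$, so $\widetilde{\mathcal E}$ remains a tilting object over the deformed Abelian category. In this $B$-linear setting the Hille--Van den Bergh tilting theorem \cite[Thm.~7.6]{hillevandenbergh} applies without change and yields a triangulated equivalence $\D (\widetilde{\mathcal C}) \simeq \D (\Mod (\End \widetilde{\mathcal E}))$.

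It remains to identify $\End \widetilde{\mathcal E}$ with $(\End \mathcal E \hatotimes B, \star_{\phi + \widetilde \phi})$ as formal $B$-algebra deformations of $\End \mathcal E$. Both are classified up to equivalence by Maurer--Cartan elements of $\mathfrak h (\End \mathcal E) \hatotimes \mathfrak m$. By the naturality of the zigzag \eqref{zigzag} and of the L$_\infty$ quasi-isomorphism $\mathfrak h (\End \mathcal E) \to \mathfrak{lv} (\Mod (\End \mathcal E))$ of \cite[Thm.~6.1]{lowenvandenbergh1} with respect to the tilting functor, both Maurer--Cartan classes coincide with the image of $\widetilde \Phi$ under the zigzag, and are therefore gauge equivalent; translating the equivalence of the previous paragraph along this identification gives the desired statement.

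The main obstacle is precisely this last naturality check: one must verify that the chain of L$_\infty$ quasi-isomorphisms in \eqref{zigzag} is compatible with the derived tilting functor, i.e.\ that lifting $\mathcal E$ under the deformation encoded by $\widetilde \Phi$ genuinely produces the Maurer--Cartan element $\widetilde \phi$ constructed combinatorially via $\mathfrak p (Q^{\mathrm{tilt}}, R^{\mathrm{tilt}})$. Equivalently, one must trace the effect of $\blank \otimes^{\mathbf L} \mathcal E$ on the full formal deformation functors, not merely on their tangent spaces $\HH^2 (\End \mathcal E) \simeq \HH^2 (X)$. This compatibility is implicit in the B$_\infty$ quasi-isomorphism of \cite[Thm.~6.1]{lowenvandenbergh1} used in the proof of Theorem \ref{theorem:tiltingbundlepqr}, but extracting it cleanly at the level of deformation functors is the step that requires the most care.
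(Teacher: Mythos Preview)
Your approach is correct in outline but takes a different, more hands-on route than the paper. The paper's proof is a two-line citation: $\mathcal E$ is a compact generator of $\D(\Qcoh(X))$, and Blanc--Katzarkov--Pandit \cite[Thm.~4.29]{blanckatzarkovpandit} prove in general that a compact generator of a $\Bbbk$-linear $\infty$-category lifts to any formal deformation (with Karmazyn \cite[Thm.~3.4]{karmazyn} cited for the commutative case). Your inductive lifting argument via $\Ext^2(\mathcal E,\mathcal E)=0$ is precisely the obstruction-theoretic content of that black-box result, specialized to the situation at hand; so you are essentially reproving the relevant case of their theorem rather than invoking it.

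The ``main obstacle'' you flag --- matching $\End\widetilde{\mathcal E}$ with $(\End\mathcal E\hatotimes B,\star_{\phi+\widetilde\phi})$ --- is less severe than you suggest, and the paper does not treat it as an obstacle at all. Once $\widetilde{\mathcal E}$ exists as a compact generator of the deformed category, $\End\widetilde{\mathcal E}$ is automatically a flat $B$-deformation of $\End\mathcal E$ and hence represents \emph{some} Maurer--Cartan class in $\mathfrak h(\End\mathcal E)\hatotimes\mathfrak m$. The phrase ``let $\widetilde\phi$ be the corresponding Maurer--Cartan element'' in the statement means: the class determined by $\widetilde\Phi$ via the zigzag \eqref{zigzag}. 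But the L$_\infty$ quasi-isomorphisms in that zigzag are precisely what identify the formal deformation \emph{functors} (not just their tangent spaces), so the class of $\End\widetilde{\mathcal E}$ and the class $\widetilde\phi$ both map to the same gauge class in $\mathfrak{lv}(\Mod(\End\mathcal E))\hatotimes\mathfrak m$ --- namely the one encoding the deformed Abelian category $\widetilde{\mathcal C}$ --- and hence agree. This is essentially tautological once the zigzag is in place; no separate ``naturality with respect to $\blank\otimes^{\mathbf L}\mathcal E$'' needs to be checked. Where your argument does carry extra burden is in asserting that Hille--Van den Bergh \cite[Thm.~7.6]{hillevandenbergh} ``applies without change'' over $B$; the paper sidesteps this by working at the $\infty$-categorical level where the Blanc--Katzarkov--Pandit result already delivers the derived equivalence.
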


\begin{proof}
The tilting bundle $\mathcal E$ is a compact generator of $\D (\Qcoh (X))$ and Blanc--Katzar\-kov--Pandit \cite[Thm.~4.29]{blanckatzarkovpandit} showed more generally that for any $\Bbbk$-linear $\infty$-category $\mathcal D$, a compact generator of $\mathcal D$ lifts to any formal deformation of $\mathcal D$. A proof in the case of purely commutative deformations can also be found in Karmazyn \cite[Thm.~3.4]{karmazyn}.
\end{proof}

\section{A quasi-projective example}
\label{section:totalspaces}

In this section we give a detailed description of the deformation theory of the Abelian category $\Qcoh (Z_k)$, where $Z_k = \Tot \mathcal O_{\mathbb P^1} (-k)$ for $k \geq 2$ is the total space of a negative line bundle on $\mathbb P^1$ with first Chern class $-k$. This family of smooth quasi-projective surfaces allows us to illustrate the various aspects of the combinatorial approach to deformations of Abelian categories of (quasi)coherent sheaves, such as the obstruction calculus for $\mathfrak p (Q, R)$ and the problem of finding algebraizations. The surfaces $Z_k$ are particularly well-suited for the following reasons.

Firstly, the surfaces $Z_k$ admit tilting bundles, so that we can compare the two different descriptions of the deformation theory of $\Qcoh (Z_k)$ given in \S\ref{section:applications} via deformations of the diagram algebra and via deformations of the endomorphism algebra of the tilting bundle. Since $Z_k$ is covered by two copies of $\mathbb A^2$, both the diagram algebra and the endomorphism algebra of the tilting bundle can be easily described.

Secondly, the Hochschild--Kostant--Rosenberg theorem decomposes the second Hochschild cohomology into two direct summands
\[
\HH^2 (Z_k) \simeq \H^0 (\Lambda^2 \mathcal T_{Z_k}) \oplus \H^1 (\mathcal T_{Z_k})
\]
corresponding to noncommutative deformations parametrized by algebraic Poisson structures and commutative deformations, respectively (see Lemma \ref{lemma:hochschildminres}). Moreover, obstructions lie in
\[
\HH^3 (Z_k) \simeq \H^1 (\Lambda^2 \mathcal T_{Z_k}) \simeq \Bbbk^{k-3}
\]
which is nontrivial for $k \geq 4$ allowing us to illustrate a nontrivial obstruction calculus of the L$_\infty$ algebra $\mathfrak p (Q, R)$ by using the necessary and sufficient condition given in Theorem \ref{theorem:higher-brackets}. 

Thirdly, the surfaces $Z_k$ are minimal resolutions of the cyclic surface singularities $X_k$ of type $\frac{1}{k} (1, 1)$ and any deformation of $\Qcoh (Z_k)$ induces a deformation of $X_k$. In \S\ref{section:singularities} we use the geometric description of the deformation theory of $\Qcoh (Z_k)$ to produce a family of simultaneously commutative and noncommutative deformations of $X_k$.

\subsection{Deformations of the diagram algebra}
\label{subsection:zkdiagram}

The surface $Z_k$ is a smooth toric surface covered by only two affine open sets $U = \{ (z, u) \in \Bbbk^2 \}$ and $V = \{ (\zeta, v) \in \Bbbk^2 \}$ and for $z, \zeta \neq 0$ we have $(\zeta, v) = (z^{-1}, z^k u)$ on $U \cap V \simeq \Bbbk^\times \times \Bbbk$. (Here $z$ and $\zeta$ are the local coordinates on $\mathbb P^1$.) We may thus take $\mathfrak U = \{ U, V, U \cap V \}$ as the affine open cover closed under intersections and consider the diagram of algebras $\mathcal O_{Z_k} \vert_{\mathfrak U}$ given by
\[
\mathcal O_{Z_k} (U) \tikzlongrightarrow \mathcal O_{Z_k} (U \cap V) \tikzlongleftarrow \mathcal O_{Z_k} (V)
\]
which in the above coordinates can be written as
\begin{equation}
\label{diagramzk}
\begin{tikzpicture}[baseline=-2.6pt,description/.style={fill=white,inner sep=1.75pt}]
\matrix (m) [matrix of math nodes, row sep=0em, text height=1.5ex, column sep=1.5em, text depth=0.25ex, ampersand replacement=\&]
{
\Bbbk [z, u] \& \Bbbk [x, y, w] / (xy - 1) \& \Bbbk [\zeta, v] \\[.5em]
z \& \mathrlap{x} \phantom{\Bbbk [x, y, w] / (xy - 1)} \mathllap{y} \& \zeta \\
u \& \mathrlap{w} \phantom{\Bbbk [x, y, w] / (xy - 1)} \mathllap{x^k w} \& v\rlap{.} \\
};
\path[->,line width=.4pt,font=\scriptsize]
(m-1-1) edge node[above=-.2ex] {$f$} (m-1-2)
(m-1-3) edge node[above=-.2ex] {$g$} (m-1-2)
;
\path[|->,line width=.4pt]
(m-2-1) edge (m-2-2)
(m-2-3) edge (m-2-2)
(m-3-1) edge (m-3-2)
(m-3-3) edge (m-3-2)
;
\end{tikzpicture}
\end{equation}
By Proposition \ref{proposition:diagram} have an algebra isomorphism $\mathcal O_{Z_k} \vert_{\mathfrak U}! \simeq \Bbbk Q^{\mathrm{diag}} / J$, where $Q^{\mathrm{diag}}$ is the quiver
\[
\begin{tikzpicture}[baseline=-2.75pt,x=3.75em,y=1em]
\draw[line width=1pt, fill=black] (-1,0) circle(0.2ex);
\draw[line width=1pt, fill=black] (0,0) circle(0.2ex);
\draw[line width=1pt, fill=black] (1,0) circle(0.2ex);
\node[shape=circle, scale=0.7](L) at (-1,0) {};
\node[shape=circle, scale=0.7](M) at (0,0) {};
\node[shape=circle, scale=0.7](R) at (1,0) {};
\node[shape=circle, scale=0.9](LL) at (-1,0) {};
\path[->,line width=.4pt,font=\scriptsize, looseness=16, in=125, out=55]
(L.70) edge node[above=-.2ex] {$z$} (L.110)
;
\path[->,line width=.4pt,font=\scriptsize, looseness=16, in=305, out=235]
(L.250) edge node[below=-.2ex] {$u$} (L.290)
;
\path[->,line width=.4pt,font=\scriptsize, looseness=16, in=165, out=95, transform canvas={yshift=1pt}]
(M.110) edge node[above=-.2ex] {$x$} (M.150)
;
\path[<-,line width=.4pt,font=\scriptsize, looseness=16, in=85, out=15, transform canvas={yshift=1pt}]
(M.30) edge node[above=-.2ex] {$y$} (M.70)
;
\path[->,line width=.4pt,font=\scriptsize, looseness=16, in=305, out=235]
(M.250) edge node[below=-.2ex] {$w$} (M.290)
;
\path[<-,line width=.4pt,font=\scriptsize, looseness=16, in=125, out=55]
(R.70) edge node[above=-.2ex] {$\zeta$} (R.110)
;
\path[<-,line width=.4pt,font=\scriptsize, looseness=16, in=305, out=235]
(R.250) edge node[below=-.2ex] {$v$} (R.290)
;
\path[->,line width=.4pt,font=\scriptsize]
(L) edge node[above=-.2ex] {$f$} (M)
;
\path[->,line width=.4pt,font=\scriptsize]
(R) edge node[above=-.2ex] {$g$} (M)
;
\begin{scope}[yshift=-3.25em]
\node[font=\footnotesize] (U) at (-1,0) {$U$};
\node[font=\footnotesize] (UV) at (0,0) {$U \cap V$};
\node[font=\footnotesize] (V) at (1,0) {$V$};
\end{scope}
\end{tikzpicture}
\]
and $J$ is the two-sided ideal of relations generated by $s - \Phi_s$ for $(s, \Phi_s) \in R^{\mathrm{diag}}$ where $R^{\mathrm{diag}}$ is the reduction system consisting of the following pairs
\begin{itemize}[itemsep=.25em]
\item $(u z, z u)$, $(v \zeta, \zeta v)$, $(w x, x w)$, $(w y, y w)$ \hfill {\it commutativity of charts}
\item $(z f, f x)$, $(u f, f w)$, $(\zeta g, g y)$, $(v g, g x^k w)$ \hfill {\it compatibility with morphisms}
\item $(x y, 1)$, $(y x, 1)$ \hfill {\it mutually inverse coordinates.}
\end{itemize}
In particular, we have that
\begin{equation}\label{reductionSzk}
\begin{aligned}
S & = \{ u z, v \zeta, w x, w y, z f, u f, \zeta g, v g, x y, y x \}\\
S_3 & = \{uzf, v \zeta g, wxy, wyx, xyx, yxy\}.
\end{aligned}
\end{equation}
We denote by $\Phi \in \Hom (\Bbbk S, \Bbbk Q^{\mathrm{diag}} / J)$ the element determining $R^{\mathrm{diag}}$ (cf.\ Remark \ref{remark:f}).

The reduction system $R^{\mathrm{diag}}$ can be obtained from the construction in the proof of Proposition \ref{proposition:diagram} by choosing the Gröbner bases on $\mathcal O_{Z_k} (U), \mathcal O_{Z_k} (V), \mathcal O_{Z_k} (U \cap V)$ corresponding to the admissible orders extending $z \prec u$, $\zeta \prec v$ and $x \prec y \prec w$, respectively.

\begin{remark}
The quiver for $Z_k = \Tot \mathcal O_{\mathbb P^1} (-k)$ is exactly the same as the quiver in Example \ref{example:genus3}. However the relations are different: the genus $3$ curve of Example \ref{example:genus3} was written as a hypersurface in $\Tot \mathcal O_{\mathbb P^1} (1) = U_0 \cup U_1$, where $U_0, U_1$ are two of the standard affine coordinate charts of $\mathbb P^2$.
\end{remark}

From the reduction system $R^{\mathrm{diag}}$ we obtain a projective resolution $P_\ldot$ \eqref{resolution} of the diagram algebra $\mathcal O_{Z_k} \vert_{\mathfrak U}!$. The following lemma can be calculated geometrically as Čech cohomology with respect to the cover $\{ U, V \}$ \cite{barmeier,barmeiergasparim1,barmeiergasparim2} or algebraically using \eqref{cohomologies} and Remark \ref{remark:computinghh2}.

\begin{lemma}
\label{lemma:hochschildminres}
The cohomology groups relevant to the deformation theory appearing in the Hoch\-schild--Kostant--Rosenberg decomposition of $\HH^i (Z_k)$ are the following
\begin{flalign*}
&& \H^0 (\mathcal O_{Z_k}) &\simeq \Bbbk [u, zu, \dotsc, z^k u] \simeq \Bbbk [z_0, \dotsc, z_k] / (z_i z_{j+1} - z_{i+1} z_j)_{0 \leq i < j < k} && \\[.5em]
&& \H^1 (\mathcal T_{Z_k}) &\simeq \Bbbk \bigl\langle z^{-k+j} \tfrac{\partial}{\partial u} \bigr\rangle_{1 \leq j \leq k-1} \simeq \Bbbk^{k-1} && \\[.5em]
&& \H^0 (\Lambda^2 \mathcal T_{Z_k}) &\simeq
\begin{cases}
\H^0 (\mathcal O_{Z_k}) \bigl\langle \tfrac{\partial}{\partial z} {\hair \wedge \hair} \tfrac{\partial}{\partial u} \bigr\rangle & \textup{if $k = 2$} \\[.3em]
\H^0 (\mathcal O_{Z_k}) \bigl\langle z^j u \tfrac{\partial}{\partial z} {\hair \wedge \hair} \tfrac{\partial}{\partial u} \bigr\rangle_{0 \leq j \leq 2} & \textup{if $k \geq 3$}
\end{cases} \\[.5em]
&& \H^1 (\Lambda^2 \mathcal T_{Z_k}) &\simeq
\begin{cases}
0 & \textup{if $k = 2, 3$} \\[.3em]
\Bbbk \bigl\langle z^{-j} \tfrac{\partial}{\partial z} {\hair \wedge \hair} \tfrac{\partial}{\partial u} \bigr\rangle_{1 \leq j \leq k-3} \simeq \Bbbk^{k-3} & \textup{if $k \geq 4$}.
\end{cases}
\end{flalign*}
\end{lemma}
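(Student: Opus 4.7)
My plan is to use Čech cohomology with respect to the two-element affine cover $\{ U, V \}$, which computes sheaf cohomology on the separated scheme $Z_k$ for any quasi-coherent sheaf. Concretely, for $\mathcal F \in \{ \mathcal O_{Z_k}, \mathcal T_{Z_k}, \Lambda^2 \mathcal T_{Z_k} \}$ I would compute $\H^0 (\mathcal F) = \ker \delta$ and $\H^1 (\mathcal F) = \operatorname{coker} \delta$ from
\[
0 \longrightarrow \mathcal F (U) \oplus \mathcal F (V) \toarg{\delta} \mathcal F (U \cap V) \longrightarrow 0, \qquad \delta (\xi, \eta) = \eta \vert_{U \cap V} - \xi \vert_{U \cap V}.
\]
The transition formulae derived from $\zeta = z^{-1}$ and $v = z^k u$ are
\[
\partial_\zeta = -z^2 \partial_z + k z u \hair \partial_u, \quad \partial_v = z^{-k} \partial_u, \quad \partial_\zeta \wedge \partial_v = -z^{2-k} \partial_z \wedge \partial_u,
\]
so sections on $U \cap V$ expand uniquely as $\sum c_{l,m} z^l u^m$ in the appropriate generators, with $l \in \mathbb Z$ and $m \in \mathbb Z_{\geq 0}$. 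The image of $\delta$ on the $U$-summand always covers the half-plane $\{ l \geq 0 \}$, while on the $V$-summand it covers a shifted half-plane $\{ l \leq km + \epsilon \}$ where $\epsilon \in \{ 0, 1, 2, 2-k \}$ is dictated by the transition rules.

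For $\mathcal O_{Z_k}$ the $V$-bound is $l \leq km$, and $\{ l \geq 0 \} \cup \{ l \leq km \}$ exhausts $\mathbb Z \times \mathbb Z_{\geq 0}$, giving $\H^1 = 0$. The kernel is spanned by $z^l u^m$ with $0 \leq l \leq km$, which is exactly the subring $\Bbbk [u, zu, \dotsc, z^k u]$; the quadratic relations $z_i z_{j+1} = z_{i+1} z_j$ follow from $(z^i u)(z^{j+1} u) = z^{i+j+1} u^2 = (z^{i+1} u)(z^j u)$, and a standard monomial dimension count shows they generate the full ideal. For $\mathcal T_{Z_k}$ the $\partial_z$-coefficient yields the bound $l \leq km + 2$ on the $V$-side, so this component contributes nothing to $\H^1$; the $\partial_u$-coefficient yields $l \leq k(m-1) + 1$ for $m \geq 1$ and $l \leq -k$ for $m = 0$, and combining with $l \geq 0$ leaves exactly the strip $-k + 1 \leq l \leq -1$ at $m = 0$, producing the $k - 1$ classes $z^{-k + j} \partial_u$. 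For $\Lambda^2 \mathcal T_{Z_k}$ the $-z^{2-k}$ factor shifts the $V$-bound to $l \leq km - k + 2$; the uncovered strip at $m = 0$ is $\{ -k + 3 \leq l \leq -1 \}$, empty for $k = 2, 3$ and of cardinality $k - 3$ for $k \geq 4$, yielding the claimed basis $\{ z^{-j} \partial_z \wedge \partial_u \}_{1 \leq j \leq k-3}$.

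The only genuinely fiddly step is identifying the $\H^0 (\mathcal O_{Z_k})$-module structure of $\H^0 (\Lambda^2 \mathcal T_{Z_k})$. I would verify that the lattice $L = \{ (l, m) \mid m \geq 0,\ 0 \leq l \leq km - k + 2 \}$ is generated as a translation module over $L_0 = \{ (a, b) \mid b \geq 0,\ 0 \leq a \leq kb \}$ by $(0, 0)$ when $k = 2$ (in which case $L = L_0$) and by $(0, 1), (1, 1), (2, 1)$ when $k \geq 3$. For the latter, any $(l, m) \in L$ with $m \geq 1$ decomposes as $(a, b) + (j, 1)$ by choosing $b = m - 1$, $j = \min (l, 2)$, $a = l - j$, and one checks $0 \leq a \leq kb$; while for $m = 0$ the defining inequality $l \leq 2 - k$ is incompatible with $l \geq 0$, confirming that no new generators occur. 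As a consistency check, all four identities could alternatively be extracted algebraically from the isomorphisms \eqref{cohomologies} and the combinatorial recipe of Remark \ref{remark:computinghh2} applied to the reduction system $R^{\mathrm{diag}}$ of \eqref{reductionSzk}, but the Čech computation is more efficient and keeps the geometric meaning of each class transparent.
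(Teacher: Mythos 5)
Your Čech computation is correct and is exactly the route the paper indicates (it gives no proof, only pointing to the Čech calculation over $\{U,V\}$ carried out in the cited references); the transition formulae, the half-plane covering argument, the identification of the uncovered strips at $m=0$, and the $\H^0(\mathcal O_{Z_k})$-module generators of $\H^0(\Lambda^2\mathcal T_{Z_k})$ all check out. The only point deserving one extra line is that for $\mathcal T_{Z_k}$ the transition matrix is upper-triangular rather than diagonal ($\partial_\zeta$ contributes to both $\partial_z$ and $\partial_u$), so one should first note that the $\partial_z$-component of the cokernel vanishes and then reduce to the $\partial_u$-component, which is what your argument implicitly does and which does not change the answer.
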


Here we express these cohomology groups in terms of the polyvector fields in the chart $U$, which can be naturally identified with cocycles in $\mathfrak p(Q^{\mathrm{diag}}, R^{\mathrm{diag}})$. For instance, the basis element $z^{-k+j} \tfrac{\partial}{\partial u}$ of $\H^1 (\mathcal T_{Z_k})$ corresponds to the $2$-cocycle $\widetilde \Phi \in \Hom(\Bbbk S, \Bbbk Q^{\mathrm{diag}} / J)$ defined by
\[
\widetilde \Phi(uf) = f y^{k-j}
\]
and $\widetilde \Phi(s)= 0$ for any $s \in S \setminus \{uf \}$. Similarly, the generator $ z^ju \tfrac{\partial}{\partial z} {\hair \wedge \hair} \tfrac{\partial}{\partial u}$ of $\H^0 (\Lambda^2 \mathcal T_{Z_k})$ corresponds to the cocycle $\widetilde \Phi \in \Hom(\Bbbk S, \Bbbk Q^{\mathrm{diag}} / J)$ given by 
\begin{align}
\label{align:cocyclezk}
\widetilde \Phi(uz) = z^ju,\quad \widetilde \Phi(v \zeta) = -\zeta^{2-j} v,\quad \widetilde \Phi(wx)=x^jw,\quad \widetilde \Phi(wy) = -y^{2-j}w
\end{align}
and $\widetilde \Phi(s) = 0$ for all other $s \in S$.

By Theorems \ref{theorem:BW1} and \ref{theorem:equivalence} the deformation theory of $\Qcoh(Z_k)$ is controlled by the L$_\infty$ algebra $\mathfrak p(Q^{\mathrm{diag}}, R^{\mathrm{diag}})$. In the following we will use Theorem \ref{theorem:higher-brackets} to construct a family of explicit formal deformations of $\Qcoh(Z_k)$ and use \S\ref{subsubsection:algebraization} to find algebraizations where possible.
 
\subsubsection{Commutative deformations}
\label{subsubsection:zkcommutativediagram}

The commutative deformations of the surfaces $Z_k$ were studied in \cite{barmeiergasparim1}, where it was shown that the nontrivial deformations corresponding to cocycles in $\H^1 (\mathcal T_{Z_k})$ are smooth affine varieties. Here we illustrate how to study these ``commutative'' deformations of $\Qcoh (Z_k)$ via the diagram algebra $\mathcal O_{Z_k} \vert_{\mathfrak U}! \simeq \Bbbk Q^{\mathrm{diag}} / J$. 

Consider the following element $\widetilde \Phi \in \Hom (\Bbbk S, \Bbbk Q^{\mathrm{diag}} / J) \hatotimes (t_1, \dotsc, t_{k-1})$ given by 
\[
\widetilde \Phi(uf) =  \sum_{j =1}^{k-1} fy^{j} t_{j}
\]
and $\widetilde \Phi(s) = 0$ for all other $s \in S$ in \eqref{reductionSzk}. By Theorem \ref{theorem:higher-brackets}, $\widetilde \Phi$ is a Maurer--Cartan element of $\mathfrak p(Q^{\mathrm{diag}}, R^{\mathrm{diag}})  \hatotimes (t_1, \dotsc, t_{k-1})$ since $\star_{\Phi + \widetilde \Phi}$ is associative on elements in $S_3$ in \eqref{reductionSzk} as can be easily checked (cf.\ \eqref{check} below). This corresponds to a commutative deformation of $Z_k$ and this deformation is algebraizable (cf.\ \S\ref{subsubsection:algebraization}). Evaluating the algebraization to $t_i \tikzmapsto \lambda_i$ for some $\lambda = (\lambda_1, \dotsc, \lambda_{k-1}) \in \Bbbk^{k-1}$, we obtain the diagram algebra of the commutative deformation of $Z_k$ given by the diagram
\begin{equation}
\label{zkcomm}
\begin{tikzpicture}[baseline=-2.6pt,description/.style={fill=white,inner sep=1.75pt}]
\matrix (m) [matrix of math nodes, row sep=0em, text height=1.5ex, column sep=1.5em, text depth=0.25ex, ampersand replacement=\&]
{
\Bbbk [z, u] \& \,\Bbbk [x, y, w] \,/\, (xy - 1)\, \& \Bbbk [\zeta, v] \\[.5em]
z \& \mathrlap{x} \phantom{\,\Bbbk [x, y, w] \,/\, (xy - 1)\,} \mathllap{y} \& \zeta \\
u \& \mathrlap{w + \sum\limits_{j =1}^{k-1}\lambda_j fy^{j} } \phantom{\,\Bbbk [x, y, w] \,/\, (xy - 1)\,} \mathllap{x^k w} \& v\rlap{.} \\
};
\path[->,line width=.4pt,font=\scriptsize]
(m-1-1) edge node[above=-.2ex] {$f$} (m-1-2)
(m-1-3) edge node[above=-.2ex] {$g$} (m-1-2)
;
\path[|->,line width=.4pt]
(m-2-1) edge (m-2-2)
(m-2-3) edge (m-2-2)
(m-3-1) edge (m-3-2)
(m-3-3) edge (m-3-2)
;
\end{tikzpicture}
\end{equation}
This is a diagram of commutative algebras and the Abelian category of quasi-coherent modules for the diagram \eqref{zkcomm} (see Definition \ref{definition:modules}) is equivalent to the category of quasi-coherent sheaves on the commutative deformation of $Z_k$. (This is an analogue of the usual equivalence between $\Qcoh (\Spec A) \simeq \Mod (A)$ in the affine case.)

\subsubsection{Noncommutative deformations}
\label{subsubsection:diagramnoncommutative}

The ``purely noncommutative'' deformations of $Z_k$ corresponding to quantizations of Poisson structures on $Z_k$ were studied (over $\mathbb C$) in \cite{barmeiergasparim2} by using Kontsevich's universal quantization formula on $\mathbb C^2$. Here we give a different combinatorial construction of the quantizations via $\mathfrak p (Q^{\mathrm{diag}}, R^{\mathrm{diag}})$ that also allows us to construct a ``$q$-deformation'' of $Z_k$.

Consider the following element $\widetilde \Phi  \in \Hom (\Bbbk S, \Bbbk Q^{\mathrm{diag}} / J) \hatotimes (t_1', t_2')$ given by 
\[
\widetilde \Phi(uz) = \alpha(z)u,\quad  \widetilde \Phi(v\zeta) = \beta(\zeta) v,\quad  \widetilde \Phi(wx) = \alpha(x)w,\quad  \widetilde \Phi(wy) = \beta(y) w
\] 
and $\widetilde \Phi(s) = 0$ for all other $s \in S$ in \eqref{reductionSzk}. Here $\alpha(z) =  t_1'+zt_2'$ and $\beta(\zeta) = \sum_{i=1}^{\infty} \zeta (-\zeta t_1' - t_2')^i$ so that $(x+\alpha(x)) (y + \beta(y)) = 1$. By Theorem \ref{theorem:higher-brackets} we may check that $\widetilde \Phi$ is a Maurer--Cartan element of $\mathfrak p(Q^{\mathrm{diag}}, R^{\mathrm{diag}}) \hatotimes (t_1', t_2')$.

Concretely, this can be done by checking \eqref{mc} on elements of $S_3$ in \eqref{reductionSzk}. For example, for $uzf \in S_3$ we have $u \star (z \star f) = (u \star z ) \star f$ as illustrated by the following diagram
\begin{equation}
\label{check}
\begin{tikzpicture}[baseline=-2.6pt,description/.style={fill=white,inner sep=1.75pt}]
\matrix (m) [matrix of math nodes, row sep={2.7em,between origins}, text height=1.5ex, column sep={2.7em,between origins}, text depth=0.25ex, ampersand replacement=\&]
{
\& u z f \& \\
ufx \&\& zuf + \alpha(z) u f \\
fwx \&\& z fw + \alpha(z) fw \\
\& f x w + f \alpha(x) w\rlap{.} \& \\
};
\path[|->,line width=.4pt]
(m-1-2) edge (m-2-1)
(m-1-2) edge (m-2-3)
(m-2-1) edge (m-3-1)
(m-2-3) edge (m-3-3)
(m-3-1) edge (m-4-2)
(m-3-3) edge (m-4-2)
;
\end{tikzpicture}
\end{equation}

Let us take $t_1' = \mu_0 \hbar$ and $t_2' = \mu_1 \hbar$, where $\mu_0, \mu_1 \in \Bbbk$ and $\hbar$ is another formal parameter. The corresponding deformation of the diagram algebra $\mathcal O_{Z_k} \vert_{\mathfrak U}! \simeq \Bbbk Q^{\mathrm{diag}} / J$ gives a deformation quantization of the Poisson structure $(\mu_0 u + \mu_1 z u) \tfrac{\partial}{\partial z} {\hair \wedge \hair} \tfrac{\partial}{\partial u}$ of $Z_k$, which can be seen by comparing the first-order term of $\widetilde \Phi$ with the cocycle defined in \eqref{align:cocyclezk}. The resulting formal deformation is the diagram algebra of the following diagram of algebras
\begin{equation}
\label{diagramnoncommutative}
(\Bbbk [z, u] \llrr{\hbar}, \star) \toarg{f} (\Bbbk [x, y, w] \llrr{\hbar} / (xy - 1), \star) \leftarrowarg{g} (\Bbbk [\zeta, v] \llrr{\hbar}, \star).
\end{equation}
Similarly as for the commutative case, the Abelian category of quasi-coherent modules for the diagram \eqref{diagramnoncommutative} can be viewed as the analogue of the Abelian category of quasi-coherent sheaves on some (formal) ``noncommutative deformation'' of $Z_k$.

\begin{remark}
\label{remark:graphical}
In \cite[\S 10]{barmeierwang} we gave a graphical description of the combinatorial star product $\star$, similar to Kontsevich's graphical calculus for his universal quantization formula \cite{kontsevich1}. In particular, the star product on the individual algebras in \eqref{diagramnoncommutative} can be given by the formula
\[
a \star b = \sum_{n \geq 0} \hbar^n \sum_{\Pi \in \mathfrak G_{n, 2}} C_\Pi (a, b)
\]
where $\mathfrak G_{n,2}$ is a set of admissible graphs for the combinatorial star product and $C_\Pi$ is a bidifferential operator associated to an admissible graph $\Pi$. See also \cite{barmeierschmitt} for further applications in deformation quantization.
\end{remark}

\paragraph{\it A $q$-deformation of $Z_k$.} 
From the diagram \eqref{diagramnoncommutative} we can also get an ``actual'' noncommutative deformation of $\Qcoh (Z_k)$ corresponding to the quantization of the ``log-canonical'' Poisson structure $z u \tfrac{\partial}{\partial z} {\hair \wedge \hair} \tfrac{\partial}{\partial u}$. Let $\mathcal A^q$ denote the diagram of algebras
\begin{equation}
\label{diagramq}
\Bbbk \langle z, u \rangle / (u z - q z u) \toarg{f} \Bbbk \langle x, y, w \rangle / J \leftarrowarg{g} \Bbbk \langle \zeta, v \rangle / (v \zeta - q^{-1} \zeta v)
\end{equation}
where $J = (w x - q x w, w y - q^{-1} y w, x y - 1, y x - 1)$ and $q \in \Bbbk \setminus \{ 0 \}$ is obtained by evaluating $1 + \hbar$ to some nonzero constant. Note that $q = 1$ recovers the diagram $\mathcal O_{Z_k} \vert_{\mathfrak U}$. The diagram $\mathcal A^q$ \eqref{diagramq} can thus be viewed as a ``$q$-deformation'' of $\mathcal O_{Z_k} \vert_{\mathfrak U}$ and the Abelian category of modules over this diagram can be compared to $\Qcoh (Z_k) \simeq \Qcoh (\mathcal O_{Z_k} \vert_{\mathfrak U})$ on equal terms. For example, one can find $q$-analogues of line bundles on $Z_k$ and study their moduli spaces of vector bundles or instantons. (In the context of formal deformation quantizations, these questions were studied in \cite{barmeiergasparim2}.)

One can also find a corresponding $q$-deformation $A^q$ of the endomorphism algebra $A = \End (\mathcal O_{Z_k} \oplus \mathcal O_{Z_k} (1))$ of a tilting bundle on $Z_k$ which is derived equivalent to $\mathcal A^q$ (see \S\S\ref{subsection:zktilting}--\ref{subsection:diagramtilting}).

\subsection{Deformations via the tilting bundle}
\label{subsection:zktilting}

The pullback of the tilting bundle $\mathcal O_{\mathbb P^1} \oplus \mathcal O_{\mathbb P^1} (1)$ on $\mathbb P^1$ along the projection $Z_k \toarg{\pi} \mathbb P^1$ is $\mathcal O_{Z_k} \oplus \mathcal O_{Z_k} (1)$, which is a tilting bundle on $Z_k$. We have
\begin{equation}
\label{quiverzk}
A = \End (\mathcal O_{Z_k} \oplus \mathcal O_{Z_k} (1)) \simeq \Bbbk \biggl( 
\begin{tikzpicture}[x=4em,y=1em,baseline=-.3em]
\draw[line width=1pt, fill=black] (1,0) circle(0.2ex);
\draw[line width=1pt, fill=black] (0,0) circle(0.2ex);
\node[shape=circle, scale=0.7](L) at (0,0) {};
\node[shape=circle, scale=0.7](R) at (1,0) {};
\path[->, line width=.4pt]
(L) edge[transform canvas={yshift=.6ex}] node[above=-.3ex, font=\scriptsize] {$x_0, x_1$} (R)
(L) edge[transform canvas={yshift=-.2ex}] (R)
;
\path[->, line width=.4pt, line cap=round]
(R) edge[out=-160, in=-20, transform canvas={yshift=-.6ex}] (L)
(R) edge[out=-125, in=-55, transform canvas={yshift=-.6ex}, looseness=1.2] node[below=-.4ex, font=\scriptsize] {$y_0{,}..., y_{k-1}$} (L);
\draw (0.5,-.95) node[font=\scriptsize] {$.$};
\draw (0.5,-1.15) node[font=\scriptsize] {$.$};
\draw (0.5,-1.35) node[font=\scriptsize] {$.$};
\end{tikzpicture}
\biggr) \Big/ I
\end{equation}
where $I$ is the ideal generated by
\begin{flalign}
\label{ideal}
&&\begin{aligned}
&x_1 y_{j-1} -x_0 y_j  \\
&y_{j} x_0 - y_{j-1} x_1 
\end{aligned}
&& \mathllap{1 \leq j \leq k-1}
\end{flalign}
and $A$ is the reconstruction algebra for the $\frac1k(1,1)$ singularity (cf.\ Wemyss \cite{wemyss2}). We use the usual notation $Q^{\mathrm{tilt}}$ for the quiver arising from the tilting  bundle in (\ref{quiverzk}) and label the left vertex of $Q^{\mathrm{tilt}}$ corresponding to $\mathcal O_{Z_k}$ by $0$ and the right vertex corresponding to $\mathcal O_{Z_k} (1)$ by $1$.

\subsubsection{Reduction system and Hochschild cohomology}
\label{subsubsection:zkreduction}

For $A = \Bbbk Q^{\mathrm{tilt}} / I$ as in (\ref{quiverzk}), we can give the following reduction system
\begin{equation}
\label{reductionsystem}
\begin{aligned}
R^{\mathrm{tilt}} &= \bigl\{ (x_1 y_{j-1}, x_0 y_j), (y_j x_0,y_{j-1} x_1 ) \bigr\}_{0 < j \leq k - 1} \\
\mathllap{\text{with}}\qquad S &= \{x_1 y_{j-1} , y_j x_0  \}_{0 < j \leq k - 1}
\end{aligned}
\end{equation}
and it is straightforward to check that $R^{\mathrm{tilt}}$ satisfies ($\diamond$) for $I$. Indeed the indices were chosen so that their sum is preserved by reductions and each path can thus be uniquely reduced to a path such that the indices are as large as possible towards the right.

In fact, $R^{\mathrm{tilt}}$ can be obtained from the (reduced) noncommutative Gröbner basis for $I$ with respect to the following order $\prec$. Let $p, q \in Q^{\mathrm{tilt}}_\ldot$ and let $\lvert \blank \rvert$ denote the path length.
\begin{itemize}
\item If $\lvert p \rvert < \lvert q \rvert$ set $p \prec q$.
\item If $\lvert p \rvert = \lvert q \rvert$ and $\deg (p) < \deg (q)$ set $p \prec q$, where $\deg$ is the degree defined on $\Bbbk Q$ by setting $\deg (x_i) = i$ for $i = 0, 1$ and $\deg (y_j) = j$ for $0 \leq j \leq k-1$.
\item If $\lvert p \rvert = \lvert q \rvert$ and $\deg (p) = \deg (q)$, let $\prec$ be defined as the lexicographical order which extends
$x_0 \prec y_0 \prec x_1 \prec y_1 \prec y_2 \prec \cdots \prec y_{k-2} \prec y_{k-1}$.
\end{itemize}

The reduction system $R^{\mathrm{tilt}}$ (\ref{reductionsystem}) has overlap ambiguities
\begin{align*}
S_3 &= \bigl\{ x_1 y_j x_0 \bigr\}_{0 < j < k - 1}
\end{align*}
and no higher ambiguities (i.e.\ $S_{\geq 4} = \emptyset$). Note that if $k = 2$ we also have $S_3 = \emptyset$. 

Relevant to the deformation theory are $2$-cocycles, which for $k \geq 3$ are seen to be generated as $\HH^0 (A)$-module by $\alpha_1, \dotsc, \alpha_{k-1}, \beta_0, \beta_1, \beta_2 \in \Hom(\Bbbk S, A)$, where 
\begin{flalign}
\label{equation:cocyclezk}
&&
\begin{aligned}
\alpha_i(x_1y_{j-1}) &=  \delta_{i, j} e_0 \\
\alpha_i(y_jx_0)     &= -\delta_{i, j} e_1
\end{aligned}
&&
\begin{aligned}
\beta_l (x_1y_{j-1}) &= x_0y_{j-1+l} \\
\beta_l (y_jx_0)     &= 0
\end{aligned}
&&
\end{flalign}
where $1 \leq j \leq k - 1$ and  $x_0y_{k}: = x_1y_{k-1}$. For $k = 2$ the $2$-cocycles are generated by $\alpha_1$ and $\beta_{\mathrm{symp}}$ defined as 
\begin{flalign}
\label{equation:cocyclez2}
&&
\begin{aligned}
\alpha_1 (x_1 y_0) &=  e_0 \\
\alpha_1 (y_1 x_0) &= -e_1
\end{aligned}
&&
\begin{aligned}
\beta_{\mathrm{symp}} (x_1 y_0) &= e_0 \\
\beta_{\mathrm{symp}} (y_1 x_0) &= 0.
\end{aligned}
&&
\end{flalign}
The cocycle condition for $\alpha_i, \beta_0, \beta_1, \beta_2$ and $\beta_{\mathrm{symp}}$ can easily be checked by using Remark \ref{remark:computinghh2}. (Note that for $k = 2$, $\beta_0, \beta_1, \beta_2$ are also $2$-cocycles, but they can be obtained from $\beta_{\mathrm{symp}}$ by multiplying by the paths $x_0 y_0, x_0 y_1, x_1 y_1$ viewed as elements in $\HH^0 (A) = \mathrm Z (A)$.)

Under the natural isomorphism $\HH^2 (A) \simeq \HH^2 (Z_k)$, one may obtain the following correspondence between ``algebraic'' $2$-cocycles in $\HH^2 (A)$ and ``geometric'' $2$-cocycles in $\HH^2 (Z_k)$
\begin{equation}
\label{tablecorr}
\begin{tabular}{rccc}
& \textit{commutative} & \multicolumn{2}{c}{\textit{noncommutative}} \\[.2em]
\text{algebraic} & $\alpha_j$ & \;$\beta_{\mathrm{symp}}$\; & \;$\beta_l$\; \\[.4em]
\text{geometric} & $z^{j} \tfrac{\partial}{\partial u}$ & \;$\tfrac{\partial}{\partial z} {\hair \wedge \hair} \tfrac{\partial}{\partial u}$\; & \;$z^l u \hair \tfrac{\partial}{\partial z} {\hair \wedge \hair} \tfrac{\partial}{\partial u}$\; \\[.3em]
& \footnotesize $(k \geq 2)$ & \footnotesize $(k = 2)$ & \footnotesize $(k \geq 2)$ \\
\end{tabular}
\end{equation}
where $1 \leq j \leq k - 1$ and $0 \leq l \leq 2$ (cf.\ Lemma \ref{lemma:hochschildminres}). Note that $\beta_{\mathrm{symp}}$ corresponds to the canonical holomorphic symplectic form on the open Calabi--Yau surface $\Tot \mathcal O_{\mathbb P^1} (-2) \simeq \mathrm T^* \mathbb P^1$.

\subsubsection{Commutative deformations}
\label{subsubsection:zkcommutative}

We now construct a family of deformations of $A$ which correspond to ``classical'' geometric deformations of $Z_k$. 

Let $\alpha_1, \dotsc, \alpha_{k-1}$ be as in (\ref{equation:cocyclezk}). Consider the element 
\[
\widetilde \phi =  \alpha_1 t_1 + \dotsb + \alpha_{k-1} t_{k-1} \in \Hom(\Bbbk S, A) \hatotimes (t_1, \dotsc, t_{k-1}).
\]
By Theorem \ref{theorem:higher-brackets} $\widetilde \phi$ is  a Maurer--Cartan element of the L$_\infty$ algebra $\mathfrak p (Q^{\mathrm{tilt}}, R^{\mathrm{tilt}}) \hatotimes (t_1, \dotsc, t_{k-1})$ since $\star_{\phi + \widetilde \phi}$ is associative on elements in $S_3 = \{ x_0 y_j x_1 \}_{0 < j < k-1}$ as can be easily checked.
Since each $\alpha_i$, and thus also $\widetilde \phi$, satisfies the condition \eqref{degreecondition}, the associated formal deformation admits the algebra
\[
A_{\phi + \widetilde \phi} := (A [t_1, \dotsc, t_{k-1}], \star_{\phi + \widetilde \varphi}) \simeq \Bbbk Q^{\mathrm{tilt}} [t_1, \dotsc, t_{k-1}] / I_{\phi + \widetilde \phi}
\]
as an algebraization (see Proposition \ref{proposition:Groebner}), where $I_{\phi + \widetilde \phi}$ is the ideal generated by
\begin{flalign*}
&&\begin{aligned}
x_1 y_{j-1} - x_0 y_j   - e_0 t_j \\
y_j x_0  -  y_{j-1} x_1 + e_1 t_j
\end{aligned}
&&
\mathllap{1 \leq j \leq k-1.}
\end{flalign*}
Evaluating this algebraization to $t_i \tikzmapsto \lambda_i$ for some $\lambda = (\lambda_1, \dotsc, \lambda_{k-1}) \in \Bbbk^{k-1}$ we obtain an actual deformation $A_\lambda := A_{\phi + \widetilde \varphi}\rvert_{t_i = \lambda_i}$ of $A$. 

\begin{proposition}
\label{prop:zkcommutative}
$A_\lambda$ is Morita equivalent to its center $\mathrm Z (A_\lambda ) \simeq e_0 A_\lambda  e_0 \simeq e_1 A_\lambda  e_1$ precisely when $\lambda = (\lambda_1, \dotsc, \lambda_{k-1}) \neq 0$. In this case we have algebra isomorphisms
$$
\mathrm Z (A_\lambda ) \simeq e_i A_\lambda  e_i \simeq \Bbbk [z_0, \dotsc, z_k] \big/ \bigl( \mathrm{rank} \bigl(
\begin{smallmatrix}
z_0 & z_1 + \lambda_1 & z_2 + \lambda_2 & \cdots & z_{k-1} + \lambda_{k-1} \\
z_1 & z_2             & z_3             & \cdots & z_k
\end{smallmatrix}
\bigr) \leq 1 \bigr).
$$
\end{proposition}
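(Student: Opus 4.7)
The plan is to prove the statement in three stages: first establish Morita equivalence between $A_\lambda$ and $e_i A_\lambda e_i$ precisely when $\lambda \neq 0$, then conclude that $\mathrm Z(A_\lambda) \simeq e_i A_\lambda e_i$, and finally identify the corner ring with the displayed polynomial quotient.

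For the Morita equivalence I would show that $A_\lambda e_0 A_\lambda = A_\lambda$ iff $e_1 \in A_\lambda e_0 A_\lambda$. If some $\lambda_j \neq 0$, the defining relation $y_j x_0 - y_{j-1} x_1 + \lambda_j e_1 = 0$ rewrites as $e_1 = -\lambda_j^{-1}(y_j \cdot e_0 \cdot x_0 - y_{j-1} \cdot e_0 \cdot x_1) \in A_\lambda e_0 A_\lambda$, so that $A_\lambda e_0$ is a progenerator and standard Morita theory gives $A_\lambda \sim_{\mathrm{Mor}} e_0 A_\lambda e_0$. Conversely, if $\lambda = 0$ the algebra $A_0$ carries a path-length grading in which $A_0 e_0 A_0$ is spanned by (images of) paths passing through vertex $0$; since the reductions in $R^{\mathrm{tilt}}$ preserve the set of vertices touched by a path, this span does not contain the trivial idempotent $e_1$ at vertex $1$, giving $e_1 \notin A_0 e_0 A_0$. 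The symmetric argument using $x_1 y_{j-1} - x_0 y_j - \lambda_j e_0 = 0$ yields the same dichotomy for $e_1 A_\lambda e_1$.

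For $\lambda \neq 0$ the induced isomorphism of centers gives $\mathrm Z(A_\lambda) \simeq \mathrm Z(e_0 A_\lambda e_0) \simeq \mathrm Z(e_1 A_\lambda e_1)$, so to conclude $\mathrm Z(A_\lambda) \simeq e_i A_\lambda e_i$ I need to show that each corner ring is itself commutative. I would derive this from Proposition \ref{proposition:derived}: the Maurer--Cartan element $\widetilde \phi = \sum_j \alpha_j t_j$ corresponds under \eqref{tablecorr} to an element of $\H^1(\mathcal T_{Z_k})$ rather than $\H^0(\Lambda^2 \mathcal T_{Z_k})$, so $A_\lambda$ is derived equivalent to $\Qcoh$ of a genuine commutative scheme-theoretic deformation of $Z_k$; the corner rings correspond to endomorphism rings of line bundles on this smooth commutative scheme and are commutative. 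A direct check is also available: commutativity of the finitely many generators $Z_j := x_0 y_j$ ($0 \leq j \leq k-1$) and $Z_k := x_1 y_{k-1}$ of $e_0 A_\lambda e_0$ follows from finitely many right-most reductions.

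To identify the corner ring with the polynomial quotient I would define $\phi \colon \Bbbk[z_0, \dotsc, z_k] \to e_0 A_\lambda e_0$ by $z_j \mapsto Z_j$. Surjectivity is clear from enumerating the irreducible paths from $0$ to $0$ given by $R^{\mathrm{tilt}}$, which are exactly $e_0$ together with $(x_0 y_0)^a x_0 y_j (x_1 y_{k-1})^b$ and $(x_1 y_{k-1})^b$, all lying in the subalgebra generated by the $Z_j$. That $\phi$ factors through the $2\times 2$-minor ideal of the displayed matrix is a direct computation: expanding each $Z_I Z_J$ via the right-most reductions $y_j x_0 \mapsto y_{j-1} x_1 - \lambda_j e_1$ and $x_1 y_{j-1} \mapsto x_0 y_j + \lambda_j e_0$ produces the minor relations. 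Injectivity then follows from a Hilbert function / associated graded argument: the path-length filtration on $A_\lambda$ has associated graded $A_0$, and $e_0 A_0 e_0 \simeq \H^0(Z_k, \mathcal O_{Z_k})$ is the coordinate ring of the affine cone over the rational normal curve of degree $k$, i.e.\ the undeformed matrix-rank-one quotient; the flatness of the deformation then forces the surjection $\Bbbk[z_0, \dotsc, z_k]/I_\lambda \twoheadrightarrow e_0 A_\lambda e_0$ to agree on each filtered piece and hence to be an isomorphism. The case of $e_1 A_\lambda e_1$ is symmetric.

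The main obstacle will be carefully matching the explicit $2\times 2$ minor relations with the rewrites produced by the reductions, as the bookkeeping with $k+1$ generators and $\binom{k}{2}$ relations is delicate; the associated-graded argument is essential because it sidesteps a direct verification that no further relations appear in $\ker \phi$ by reducing this to the well-known undeformed case.
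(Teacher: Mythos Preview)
Your Morita argument is essentially the paper's: the criterion $A_\lambda e_0 A_\lambda = A_\lambda$ is equivalent to surjectivity of the multiplication map $A_\lambda e_0 \otimes_{e_0 A_\lambda e_0} e_0 A_\lambda \to A_\lambda$, and the paper exhibits the same preimage of $1$ that you do. Your associated-graded argument for injectivity is a genuine addition --- the paper simply writes down an explicit map and asserts it is an isomorphism --- and it is a clean way to avoid checking that no extra relations appear.

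There is, however, a concrete error in your map $\phi$. With $Z_j = x_0 y_j$ (and $Z_k = x_1 y_{k-1}$), the assignment $z_j \mapsto Z_j$ does \emph{not} kill the $2\times 2$ minors of the displayed matrix. For instance, take $k \geq 3$ and the minor from columns $0$ and $1$, namely $z_0 z_2 - z_1^2 - \lambda_1 z_1$. Reducing $Z_1^2 = x_0 y_1 x_0 y_1$ via $y_1 x_0 \mapsto y_0 x_1 - \lambda_1 e_1$ and then $x_1 y_1 \mapsto x_0 y_2 + \lambda_2 e_0$ gives $Z_1^2 = Z_0 Z_2 + \lambda_2 Z_0 - \lambda_1 Z_1$, so
\[
\phi(z_0 z_2 - z_1^2 - \lambda_1 z_1) = Z_0 Z_2 - Z_1^2 - \lambda_1 Z_1 = -\lambda_2 Z_0 \neq 0
\]
whenever $\lambda_2 \neq 0$. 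The point is that the reduction $x_1 y_{j-1} \mapsto x_0 y_j + \lambda_j e_0$ introduces a constant term that your generators do not absorb. The fix is exactly what the paper does: shift the generators by these constants, setting $f_0(z_0) = -x_0 y_0$, $f_0(z_j) = -x_0 y_j - \lambda_j e_0$ for $1 \leq j \leq k-1$, and $f_0(z_k) = -x_1 y_{k-1}$. With this corrected map the minor relations do vanish, and your surjectivity and associated-graded injectivity arguments go through unchanged (the shift does not affect the associated graded). Once the corner rings are identified with a commutative ring, commutativity and the identification with $\mathrm Z(A_\lambda)$ follow without the detour through Proposition~\ref{proposition:derived}.
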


\begin{proof} 
It follows from \cite[Prop.~1.9]{buchweitz2} that $A_\lambda$ is Morita equivalent to $e_0 A_\lambda e_0$ if and only if the restricted multiplication map
\begin{align*}
\begin{tikzpicture}[baseline=-2.6pt,description/.style={fill=white,inner sep=1.75pt}]
\matrix (m) [matrix of math nodes, row sep=0em, text height=1.5ex, column sep=1.5em, text depth=0.25ex, ampersand replacement=\&, column 3/.style={anchor=base west}, column 2/.style={anchor=base west}]
{
\mu_{e_0} \colon \&[-2em] A_\lambda  e_0 \otimes_{e_0 A_\lambda  e_0} e_0 A_\lambda  \& A_\lambda  \\
\& \phantom{A_\lambda  e_0} \mathllap{a} \otimes_{e_0 A_\lambda  e_0} b \& a b \\
};
\path[->,line width=.4pt]
(m-1-2) edge (m-1-3)
;
\path[|->,line width=.4pt]
(m-2-2) edge (m-2-3)
;
\end{tikzpicture}
\end{align*}
is surjective. If there exists $1 \leq j \leq k-1$ such that $\lambda_j \neq 0$, then we have 
$$
\mu_{e_0} \bigl( e_0 \otimes_{e_0 A_\lambda  e_0} e_0 + \tfrac{1}{\lambda_j} y_{j-1} \otimes_{e_0 A_\lambda  e_0} x_1 - \tfrac{1}{\lambda_j} y_j \otimes_{e_0 A_\lambda  e_0} x_0 \bigr) = e_0 + e_1 = 1.
$$
Thus $\mu_{e_0}$ is surjective. If $(\lambda_1, \dotsc, \lambda_{k-1}) = 0$ then 
$e_1 \notin \im (\mu_{e_0})$. This yields that $A_\lambda $ is Morita equivalent to $e_0 A_\lambda  e_0$ if and only if $(\lambda_1, \dotsc, \lambda_{k-1}) \neq 0$. We obtain that their centers are isomorphic as algebras, namely $\mathrm Z (A_\lambda ) \cong e_0A_\lambda e_0$.

It is not difficult to show that there is an algebra isomorphism 
$$
 f_0 \colon \Bbbk [z_0, \dotsc, z_k] \Big/ \Big(\mathrm{rank} \bigl( \begin{smallmatrix}
z_0 & z_1 + \lambda_1 & \dotsc &  z_{k-1} + \lambda_{k-1}\\
z_1 & z_2 & \dotsc & z_k
\end{smallmatrix} \bigr) \leq 1 \Big)\tosim e_0A_\lambda e_0.$$
determined by $f_0 (z_0) = -x_0 y_0, \ f_0(z_k) = -x_1 y_{k-1}, \ f_0 (z_j) = -x_0 y_j - \lambda_j e_0$ for $1 \leq j \leq k-1$.
Similarly, we have an algebra isomorphism 
$$f_1 \colon  \Bbbk [z_0, \dotsc, z_k] \Big/ \Big(\mathrm{rank} \bigl( \begin{smallmatrix}
z_0 & z_1 + \lambda_1 & \dotsc &  z_{k-1} + \lambda_{k-1}\\
z_1 & z_2 & \dotsc & z_k
\end{smallmatrix} \bigr) \leq 1 \Big)\tosim  e_1A_\lambda e_1$$
determined by $ f_1(z_0)= - y_0x_0$ and $f_1(z_j)= - y_{j-1} x_1$ for $1\leq j \leq k-1$. 
\end{proof}

Geometrically Proposition \ref{prop:zkcommutative} reflects the fact that the nontrivial commutative deformations of $Z_k$ induced by $1$-cocycles in $\H^1 (\mathcal T_{Z_k})$ are smooth affine varieties (see \cite[Thm.~6.18]{barmeiergasparim1}) with coordinate rings $\mathrm Z (A_\lambda)$. From a singularity point of view, these affine varieties are obtained from smoothening the $\frac{1}k (1,1)$ singularity obtained by contracting the exceptional $\mathbb P^1 \subset Z_k$ with self-intersection $-k$ (cf.\ \S\ref{subsection:xk}).

\subsubsection{Noncommutative deformations}
\label{subsubsection:zknoncommutative}

We now also construct a family of deformations of $A$ corresponding to quantizations of Poisson structures.

Let $k \geq 3$ and let $\beta_0, \beta_1, \beta_2$ be as in (\ref{equation:cocyclezk}). Using Theorem \ref{theorem:higher-brackets}, one checks that the $2$-cocycle $\beta_0 t_0 + \beta_1 t_1 + \beta_2 t_2$ is itself not a Maurer--Cartan element of $\mathfrak p (Q^{\mathrm{tilt}}, R^{\mathrm{tilt}}) \hatotimes (t_0, t_1, t_2)$. However, we may consider the following element $\widetilde \phi \in \Hom(\Bbbk S, A) \hatotimes (t_0, t_1, t_2)$, where $\widetilde \phi(y_{j}x_0) = 0$ and $\widetilde \phi(x_1y_{j-1})$ equals 
\begin{align*}
-x_0y_j+   x_0y_{j-1}t_0 + \sum_{i=j}^{k-2} x_0 y_{i} t_2^{i-j}(1+t_1+t_0t_2) + x_0y_{k-1}t_2^{k-j-1}(1+t_1) + x_1y_{k-1}t_2^{k-j}
\end{align*}
for each $1 \leq  j \leq  k-1$. For instance, $\widetilde \phi(x_1y_{k-2}) = x_0y_{k-2} t_0 + x_0y_{k-1} t_1 + x_1y_{k-1}t_2$. That is, $\widetilde \phi$ ``corrects'' the $2$-cocycle $\beta_0 t_0 + \beta_1 t_1 + \beta_2 t_2$ to a Maurer--Cartan element of $\mathfrak p (Q^{\mathrm{tilt}}, R^{\mathrm{tilt}}) \hatotimes (t_0, t_1, t_2)$ by adding higher-order terms.

\begin{proposition}
\label{prop:zknoncommutative}
\begin{enumerate}
\item \label{zknc1} $\widetilde \phi$ is a Maurer--Cartan element of $\mathfrak p (Q^{\mathrm{tilt}}, R^{\mathrm{tilt}}) \hatotimes (t_0, t_1, t_2)$.
\item \label{zknc2} The formal deformation associated to $\widetilde \phi:=\beta_0 t_0 + \beta_1 t_1$ admits an algebraization 
\[
A_{\phi + \widetilde \phi} = (A[t_0, t_1], \star_{\phi+ \widetilde \varphi}) = \Bbbk Q^{\mathrm{tilt}} [t_0, t_1]/ I_{\phi + \widetilde \phi}
\]
where $I_{\phi + \widetilde \phi}$ is the two-sided ideal generated by 
\begin{gather*}
x_1 y_{j-1} - x_0 y_{j-1} t_0 - x_0 y_{j}(1 + t_1)\\
y_j x_0 - y_{j-1}x_1 
\end{gather*}
for $1 \leq j \leq k-1$. Evaluating the algebraization at $\mu = (\mu_0, \mu_1) \in \Bbbk^2$ we obtain actual deformations of $A$ corresponding to quantizations of Poisson structures on $Z_k$.
\end{enumerate}
\end{proposition}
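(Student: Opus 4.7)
The plan is to establish (i) by direct application of the combinatorial Maurer--Cartan criterion of Theorem \ref{theorem:higher-brackets}, and then to deduce (ii) by specializing to $t_2 = 0$ and invoking the algebraization criterion of Proposition \ref{proposition:Groebner} (in its evident multi-parameter form).

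For (i), I recall from \S\ref{subsubsection:zkreduction} that $S_3 = \{x_1 y_j x_0\}_{0 < j < k-1}$ and $S_{\geq 4} = \emptyset$, so Theorem \ref{theorem:higher-brackets} reduces the Maurer--Cartan equation for $\widetilde\phi$ to verifying the associativity identity \eqref{mc} with $\star = \star_{\phi + \widetilde\phi}$ on each overlap $x_1 y_j x_0$. Since $\widetilde\phi(y_j x_0) = 0$, the right-hand side of \eqref{mc} collapses to $\pi(x_1) \star \pi(y_{j-1} x_1)$, which after one reduction of the prefix $x_1 y_{j-1}$ becomes $(\phi + \widetilde\phi)(x_1 y_{j-1}) \cdot x_1$, a sum of already-irreducible paths. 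The left-hand side is $(\phi + \widetilde\phi)(x_1 y_j) \star \pi(x_0)$, and each of its summands, after right-multiplication by $x_0$, triggers a cascade of further reductions $y_i x_0 \mapsto y_{i-1} x_1$ and $x_1 y_{k-1} x_0 \mapsto \dotsb$ that must be carefully tracked. The specific form of $\widetilde\phi$ --- in particular the geometric series $\sum_{i=j}^{k-2} x_0 y_i t_2^{i-j}(1+t_1+t_0 t_2)$ together with the boundary corrections $x_0 y_{k-1} t_2^{k-j-1}(1+t_1) + x_1 y_{k-1} t_2^{k-j}$ --- is engineered precisely so that these cascaded reductions on the left reproduce the shorter expansion on the right. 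Organizing the comparison by graded degree in $(t_0, t_1, t_2)$ and by basis path then reduces the identity to a finite telescoping verification.

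For (ii), setting $t_2 = 0$ kills every term of $\widetilde\phi$ involving a positive power of $t_2$; only the $i = j$ summand of the series survives (together with the constant part of $x_0 y_{k-1}(1+t_1)$ in the case $j = k-1$), and one is left with
\[
\widetilde\phi(x_1 y_{j-1}) = x_0 y_{j-1} t_0 + x_0 y_j t_1, \qquad \widetilde\phi(y_j x_0) = 0,
\]
which equals $\beta_0 t_0 + \beta_1 t_1$ in the notation of \eqref{equation:cocyclezk}. Because the quotient map $\hatotimes (t_0, t_1, t_2) \twoheadrightarrow \hatotimes (t_0, t_1)$ is a morphism of L$_\infty$ algebras, (i) implies that this restricted $\widetilde\phi$ is a Maurer--Cartan element of $\mathfrak p(Q^{\mathrm{tilt}}, R^{\mathrm{tilt}}) \hatotimes (t_0, t_1)$. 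Both $x_0 y_{j-1}$ and $x_0 y_j$ have degree at most $j = \deg(x_1 y_{j-1})$, and at equal degree the lex comparison $x_0 \prec x_1$ gives strict inequality, so $\widetilde\phi \in \Hom(\Bbbk S, A)_\prec \otimes (t_0, t_1)$. The finiteness argument of Proposition \ref{proposition:Groebner} then yields the algebraization $(A[t_0, t_1], \star_{\phi + \widetilde\phi})$, whose presentation is obtained by replacing each relation $s - \phi(s)$ with $s - \phi(s) - \widetilde\phi(s)$, giving exactly the ideal $I_{\phi + \widetilde\phi}$ claimed. Finally, by \eqref{tablecorr} the first-order class $\beta_0 \mu_0 + \beta_1 \mu_1$ corresponds to the Poisson bivector $(\mu_0 u + \mu_1 z u) \tfrac{\partial}{\partial z} \wedge \tfrac{\partial}{\partial u}$ on $Z_k$, so each evaluated algebra $A_\mu$ is a deformation quantization of this Poisson structure.

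The hard part will be the associativity check in part (i). The $t_2$-geometric series in $\widetilde\phi$ reflects a genuine feedback in the rewriting process: the reduction $y_{k-1} x_0 \mapsto y_{k-2} x_1$ re-exposes the reducible prefix $x_1 y_{k-2}$, which triggers another application of $\widetilde\phi$, and so on, so the powers $t_2^{k-j}, t_2^{k-j-1}, \dotsc$ record exactly the contributions of these nested rewrites. The cleanest organizing principle is that the defining formula for $\widetilde\phi$ at index $j$ is obtained from the one at $j+1$ by a single index shift; this makes the overlap identity at index $j$ equivalent to the one at $j+1$ plus a computable boundary contribution, so that a single base-case check (for example $j = k-2$) propagates to all $j$.
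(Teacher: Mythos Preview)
Your proposal is correct and follows the paper's strategy exactly: apply Theorem \ref{theorem:higher-brackets} to reduce (i) to associativity on $S_3 = \{x_1 y_j x_0\}_{0<j<k-1}$, and then Proposition \ref{proposition:Groebner} together with the observation $x_0 y_{j-1}, x_0 y_j \prec x_1 y_{j-1}$ for (ii). The paper simply records the common irreducible value of both bracketings --- which is precisely your $(\phi+\widetilde\phi)(x_1 y_{j-1}) \cdot x_1$ --- rather than arguing inductively; note that the cascade on the left is shallower than you suggest, since only the single summand $x_1 y_{k-1} x_0 \, t_2^{k-j-1}$ triggers a second rewrite (via $y_{k-1}x_0 \mapsto y_{k-2}x_1$ and then $x_1 y_{k-2} \mapsto (\phi+\widetilde\phi)(x_1 y_{k-2})$), after which everything is irreducible, so a direct comparison already suffices without the telescoping.
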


\begin{proof}
By Theorem \ref{theorem:higher-brackets} it suffices to verify that $\star:=\star_{\phi + \widetilde \phi}$ is associative on elements in
$
S_3 = \{ x_1 y_j x_0 \}_{0 < j < k-1}.
$
This follows since both $(x_1 \star y_j) \star x_0$ and $x_1 \star (y_j \star x_0)$ equal 
$$
x_0y_{j-1}x_1t_0 + \sum_{i=j}^{k-2} x_0 y_{i} x_1t_2^{i-j}(1+t_1+t_0t_2) + x_0y_{k-1}x_1 t_2^{k-j-1}(1+t_1) + x_1y_{k-1}x_1 t_2^{k-j}.
$$

Now \ref{zknc2} follows from Proposition \ref{proposition:Groebner}, noting that $g = \beta_0 t_0 + \beta_1 t_1$ satisfies the degree condition ($\prec$), since $x_0 y_{j-1}, x_0 y_j \prec x_1 y_{j-1}$.
\end{proof}

\subsubsection{The Calabi--Yau case}
\label{subsubsection:z2}

For $k = 2$, the $\frac12(1,1)$ singularity is just the $\mathrm A_1$ singularity whose minimal resolution $Z_k \simeq \Tot \mathcal O_{\mathbb P^1} (-2) \simeq \mathrm T^* \mathbb P^1$ is an open Calabi--Yau surface and $A$ is the preprojective algebra of type $\widetilde{\mathrm A}_1$.

Let $R^{\mathrm{tilt}}$ be the reduction system given in \eqref{reductionsystem}. Since $S_3 = \emptyset$ it follows that any element $\widetilde \varphi \in \Hom(\Bbbk S, A) \hatotimes \mathfrak m$ is a Maurer--Cartan element of $\mathfrak p (Q^{\mathrm{tilt}}, R^{\mathrm{tilt}}) \hatotimes \mathfrak m$. Consider the Maurer--Cartan element $\widetilde \phi = \alpha_1 t_1 + \beta_{\mathrm{symp}} t_2$ of $\mathfrak p (Q^{\mathrm{tilt}}, R^{\mathrm{tilt}}) \hatotimes (t_1, t_2)$ (cf.\ \eqref{equation:cocyclez2}). The corresponding formal deformation $(A \llrr{t_1, t_2}, \star_{\phi+ \widetilde \varphi})$ admits an algebraization 
\begin{align}\label{Z2-2}
(A [t_1, t_2], \star_{\phi+ \widetilde \varphi})\simeq \Bbbk Q^{\mathrm{tilt}} [t_1, t_2] / I_{\phi+\widetilde \phi}
\end{align}
where $I_{\phi+\widetilde \phi}$ is the two-sided ideal generated by
\begin{gather*}
x_1 y_0 - x_0 y_1 - e_0 t_1 - e_0 t_2 \\
y_1 x_0 - y_0 x_1 + e_1 t_1.
\end{gather*}

Denote by $A_{\lambda, \mu}$ the algebra evaluating the algebraization \eqref{Z2-2} at $t_1 = \lambda$ and $t_2 = \mu$ for some $\lambda, \mu \in \Bbbk$.

\begin{proposition}
\label{proposition:z2}
\begin{enumerate}
\item \label{Z2-3} The subalgebras $e_0 A_{\lambda, \mu} e_0$ and $e_1 A_{\lambda, \mu} e_1$ are commutative if and only if $\mu = 0$ in which case $A_{\lambda, 0}$ for $\lambda \neq 0$ is Morita equivalent to a commutative deformation of the $\mathrm A_1$ surface singularity.
\item \label{Z2-4} If $\mu \neq 0$ then we have that
\[
e_0 A_{\lambda, \mu} e_0 \simeq \U (\mathfrak{sl}_2) \bigm/ \bigl( C - \textstyle\frac{\lambda^2 - \mu^2}{2 \mu^2} \bigr) \quad\text{and}\quad e_1 A_{0,1} e_1 \simeq \U (\mathfrak{sl}_2) \bigm/ \bigl( C - \frac{\lambda^2+ 2 \lambda \mu}{2\mu^2} \bigr)
\]
where $\U (\mathfrak{sl}_2)$ is the universal enveloping algebra of $\mathfrak{sl}_2 = \langle H, X, Y \rangle$ and $C = XY + YX + \frac12 H^2$ is the Casimir element.
\end{enumerate}
\end{proposition}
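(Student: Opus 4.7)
The plan is to work inside the corner algebras $e_0 A_{\lambda,\mu} e_0$ and $e_1 A_{\lambda,\mu} e_1$, each of which is generated by the length-two paths. For the vertex $0$ I will set $P = x_0 y_0$, $Q = x_0 y_1$ and $R = x_1 y_1$; the remaining length-two path $x_1 y_0$ is already expressible as $Q + (\lambda+\mu) e_0$ via the deformed relation, so $P, Q, R, e_0$ generate $e_0 A_{\lambda,\mu} e_0$. Primed analogues $P' = y_0 x_0$, $Q' = y_0 x_1$, $R' = y_1 x_1$ serve the same role at $e_1$, with $y_1 x_0 = Q' - \lambda e_1$ eliminating the fourth length-two path.

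First I will compute the commutators by repeatedly applying the two deformed relations $x_1 y_0 = x_0 y_1 + (\lambda + \mu) e_0$ and $y_1 x_0 = y_0 x_1 - \lambda e_1$. A short bookkeeping yields the identities
\begin{align*}
[Q, P] &= \mu P, & [R, Q] &= \mu R, & [R, P] &= 2\mu Q + \mu(\lambda+\mu) e_0,
\end{align*}
and the analogous three identities at $e_1$ with $[R',P'] = 2\mu Q' - \lambda\mu\, e_1$, together with the quadratic relations $PR = Q^2 + \lambda Q$ and $RP = Q^2 + (\lambda+2\mu) Q + \mu(\lambda+\mu) e_0$. Part \ref{Z2-3} then follows at once: if $\mu = 0$ all commutators vanish so both corner algebras are commutative, and the Morita statement is precisely the $k = 2$ case of Proposition \ref{prop:zkcommutative}; if $\mu \neq 0$ then $[Q, P] = \mu P$ is nonzero because $P = x_0 y_0$ is irreducible with respect to the deformed reduction system extending $R^{\mathrm{tilt}}$, and likewise for $e_1$.

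For part \ref{Z2-4} with $\mu \neq 0$, I will rescale the generators as
\[
X = P, \qquad Y = -\tfrac{1}{\mu^2} R, \qquad H = \tfrac{2}{\mu} Q + \tfrac{\lambda+\mu}{\mu} e_0;
\]
the commutator identities above are then exactly the $\mathfrak{sl}_2$-relations, so we obtain an algebra map $\U(\mathfrak{sl}_2) \to e_0 A_{\lambda,\mu} e_0$. Substituting into the Casimir element using the quadratic identities gives $XY + YX + \tfrac12 H^2 = \tfrac{\lambda^2 - \mu^2}{2 \mu^2} e_0$, so the map descends to $\Psi_0 \colon \U(\mathfrak{sl}_2) / (C - \tfrac{\lambda^2 - \mu^2}{2 \mu^2}) \to e_0 A_{\lambda,\mu} e_0$. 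The parallel construction at $e_1$ with $H' = \tfrac{2}{\mu} Q' - \tfrac{\lambda}{\mu} e_1$ yields $\Psi_1$ with Casimir eigenvalue $\tfrac{\lambda^2 + 2\lambda\mu}{2\mu^2}$.

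To conclude each $\Psi_i$ is an isomorphism, I will endow both sides with filtrations: the PBW filtration on $\U(\mathfrak{sl}_2) / (C - c)$ and the path-length filtration on $e_i A_{\lambda,\mu} e_i$. On associated graded both sides become $\Bbbk[X, Y, H] / (4XY + H^2) \simeq \Bbbk[P, Q, R] / (PR - Q^2)$; the second presentation is exactly the undeformed subalgebra $e_i A_{0, 0} e_i$ of the reconstruction algebra, which can be read off directly from the irreducible-path basis, and the induced map on associated graded is visibly bijective on generators and relations. A standard filtered-to-graded argument then upgrades this to an isomorphism for $\Psi_i$. The main obstacle is keeping the scaling consistent through $\Psi_0$ and $\Psi_1$ so that the two Casimir eigenvalues in the statement come out correctly, and identifying the top-degree piece of the deformed reduction system with the classical $\mathrm A_1$ relation $PR = Q^2$ needed to run the filtered argument.
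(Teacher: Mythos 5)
Your proposal is correct and follows essentially the same route as the paper: both exhibit explicit rescaled $\mathfrak{sl}_2$-generators among the length-two loops (your $H$ is literally the paper's $f_0(H)$, and your $X, Y$ differ from $f_0(X), f_0(Y)$ only by the harmless rescaling $X \mapsto cX$, $Y \mapsto c^{-1}Y$), and your commutator, quadratic and Casimir identities all check out, including the eigenvalues $\tfrac{\lambda^2-\mu^2}{2\mu^2}$ and $\tfrac{\lambda^2+2\lambda\mu}{2\mu^2}$. The only difference is that you supply the bijectivity verification the paper leaves to the reader (via the PBW and path-length filtrations, using that $\gr A_{\lambda,\mu} \simeq A$ because the combinatorial star product preserves the basis of irreducible paths) and prove part \ref{Z2-3} by direct computation of the commutators rather than by citing Crawley-Boevey--Holland.
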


\begin{proof}
Assertion \ref{Z2-3} follows from \cite[Thm.~0.4]{crawleyboeveyholland} and also from Proposition \ref{prop:zkcommutative} for $k = 2$. To see \ref{Z2-4} one verifies that there is an algebra isomorphism 
$f_0 \colon \U(\mathfrak{sl}_2)/(C - \frac{\lambda^2 - \mu^2}{2 \mu^2}) \tikzto e_0 A_{0, 1} e_0$ given by 
\[
f_0(H)=\frac{2}{\mu} x_0y_1 + \frac{\lambda+\mu}{\mu} e_0, \quad f_0(X)=-\frac{1}{\mu} x_0y_0 \quad \text{and} \quad f_0(Y)=\frac{1}{\mu} x_1y_1.
\]
Similarly, we have an algebra isomorphism $f_1 \colon \U (\mathfrak{sl}_2) / (C - \frac{\lambda^2+ 2 \lambda \mu}{2\mu^2}) \tikzto e_1 A_{0, 1} e_1$ given by
\begin{flalign*}
&& f_1 (H) = \frac{2}{\mu} y_0x_1 - \frac{\lambda}{\mu} e_1, \quad f_1(X) = -\frac{1}{\mu} y_0x_0 \quad \text{and} \quad f_1 (Y) = \frac{1}{\mu} y_1x_1. && \qedhere
\end{flalign*}
\end{proof}

\begin{remark}\label{remark:singularityinvariant}
Since $k = 2$, note that the algebra $A_{\lambda, \mu}$ is the deformed preprojective algebra of type $\widetilde{\mathrm A}_{1}$ corresponding to $(\lambda + \mu, -\lambda)$ with $e_0$ being the special vertex.  It follows from Crawley-Boevey--Holland \cite[Thm.~0.4~(4)]{crawleyboeveyholland} that $e_0 A_{\lambda, \mu} e_0$ has {\it infinite} global dimension if and only $\lambda = 0$, where $\lambda$ corresponds to the ``commutative'' direction in $\HH^2 (A)$. In this case we have that $e_0 A_{0, \mu} e_0 \simeq \U (\mathfrak{sl}_2) / (C + \frac{1}{2})$ and it follows from Crawford \cite[Cor.~1.2.6]{crawford} that there is a triangulated equivalence of singularity categories $\D_{\mathrm{sg}} (e_0 A_{0,\mu} e_0) \simeq \D_{\mathrm{sg}} (e_0 A \hair e_0)$. The latter is the singularity category of the $\mathrm A_1$ singularity, which stays unchanged under ``purely noncommutative'' deformations induced by the holomorphic symplectic structure on $Z_2$, but becomes trivial whenever one also deforms in a commutative direction (i.e.\ $\lambda \neq 0$).
\end{remark}

\subsubsection{Variety of simultaneous deformations}
\label{subsubsection:diagramvariety}

Recall from \S\ref{subsubsection:zkreduction} that the reduction system $R^{\mathrm{tilt}}$ is obtained from the noncommutative Gröbner basis for $I$ with respect to the order $\prec$. Using Theorem \ref{theorem:variety} we obtain an algebraic variety $V_{\prec} \subset \Hom (\Bbbk S, A) \simeq \mathbb A^N$ of actual deformations of $A$ and a natural groupoid $G_\prec \tikzrightrightarrows V_{\prec}$ whose orbits correspond to the isomorphism classes of actual deformations.

If $k = 2$, there are no obstructions and the variety $V_\prec$ of simultaneous deformations is simply the space $\Hom (\Bbbk S, A)_\prec \simeq \mathbb A^6$. We note that the $3$-dimensional linear subspace spanned by $\alpha_1$, $\beta_{\mathrm{symp}}$ and $\beta_1$ give rise to the so-called {\it deformed quantum} preprojective algebras of type $\widetilde{\mathrm A}_1$ studied by Kalck \cite{kalck} and Crawford \cite{crawford}, where $\alpha_1$ and $\beta_{\mathrm{symp}}$ correspond to the ``deformed'' part and $\beta_1$ to the ``quantum'' part as $\beta_1$ is the term giving rise to a ``$q$-deformation'' of the preprojective relations.

Now let $k \geq 3$. In this case, there are obstructions to simultaneous deformations, so it is convenient to work with a smaller set of $2$-cocycles in $\Hom (\Bbbk S, A)_\prec$. Let $H \subset \Hom (\Bbbk S, A)_\prec$ be the $\Bbbk$-linear subspace spanned by the $2$-cocycles $\alpha_1, \dotsc, \alpha_{k-1}, \beta_0, \beta_1$ \eqref{equation:cocyclezk} so that $H \simeq \mathbb A^{k+1}$. (Note that $\beta_2 \not\in \Hom (\Bbbk S, A)_\prec$.) Now consider the element  
\[
\widetilde \phi = \mu_0 \beta_0 + \mu_1 \beta_1 + \sum_{j=1}^{k-1} \lambda_j \alpha_j \in H \subset \Hom(\Bbbk S, A)_\prec
\]
with $\mu_0, \mu_1, \lambda_1, \dotsc, \lambda_{k-1} \in \Bbbk $. Then $\widetilde \phi$ is a Maurer--Cartan element of $\mathfrak p (Q^{\mathrm{tilt}}, R^{\mathrm{tilt}})$ if and only if for each $1 \leq j \leq k-2$ we have 
$(x_1 \star y_j) \star x_0 = x_1 \star (y_j \star x_0)$ 
where $\star = \star_{\phi + \widetilde \phi}$.
This is equivalent to 
\begin{flalign}
\label{align:euqationvariety}
&& \mu_0 \lambda_j + \mu_1 \lambda_{j+1} = 0  &&  \llap{for each $ 1 \leq j \leq k-2$.}
\end{flalign}
Thus we obtain the following result. 

\begin{proposition}
\label{proposition:varietyzk}
For $k \geq 3$, the equations \eqref{align:euqationvariety} cut out an affine variety $V = H \cap V_\prec \subset \mathbb A^{k+1}$ of simultaneous deformations of $A$.
\end{proposition}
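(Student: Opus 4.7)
The plan is to apply Theorem \ref{theorem:higher-brackets} directly: for $\widetilde \phi = \mu_0 \beta_0 + \mu_1 \beta_1 + \sum_{j=1}^{k-1} \lambda_j \alpha_j \in H$, the element $\widetilde \phi$ is a Maurer--Cartan element of $\mathfrak p(Q^{\mathrm{tilt}}, R^{\mathrm{tilt}})$ if and only if $\star := \star_{\phi + \widetilde \phi}$ is associative on every $x_1 y_j x_0 \in S_3$ (with $0 < j < k-1$). So the whole verification reduces to computing the two parenthesizations of $x_1 \star y_j \star x_0$ for each such $j$, reading off the associativity defect, and recognising it as a multiple of $\mu_0 \lambda_j + \mu_1 \lambda_{j+1}$.

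From \eqref{equation:cocyclezk} I would first record, for $1 \leq j \leq k-1$,
\[
\widetilde \phi(x_1 y_{j-1}) = \mu_0 x_0 y_{j-1} + \mu_1 x_0 y_j + \lambda_j e_0, \qquad \widetilde \phi(y_j x_0) = -\lambda_j e_1
\]
(using the convention $x_0 y_k := x_1 y_{k-1}$). Then $y_j \star x_0 = y_{j-1} x_1 - \lambda_j e_1$, and the right-most reduction of $x_1 y_{j-1} x_1$ (whose only reducible subpath is $x_1 y_{j-1}$) yields, after cancellation of the $\lambda_j x_1$ contributions,
\[
x_1 \star (y_j \star x_0) = (1+\mu_1) x_0 y_j x_1 + \mu_0 x_0 y_{j-1} x_1.
\]
For the left association, since $j < k-1$ we have $x_1 y_j \in S$, so $x_1 \star y_j = (1+\mu_1) x_0 y_{j+1} + \mu_0 x_0 y_j + \lambda_{j+1} e_0$; multiplying each summand on the right by $x_0$ and applying the right-most reductions at $y_{j+1} x_0, y_j x_0 \in S$ gives
\[
(x_1 \star y_j) \star x_0 = (1+\mu_1) x_0 y_j x_1 + \mu_0 x_0 y_{j-1} x_1 - (\mu_0 \lambda_j + \mu_1 \lambda_{j+1}) x_0.
\]
Comparing, the associativity defect on $x_1 y_j x_0$ is $-(\mu_0 \lambda_j + \mu_1 \lambda_{j+1}) x_0$, which vanishes precisely when \eqref{align:euqationvariety} holds.

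To upgrade this Maurer--Cartan locus to an affine variety of \emph{actual} deformations, I would observe that each basis cocycle $\alpha_j, \beta_0, \beta_1$ maps every $s \in S$ to an element strictly $\prec$-smaller than $s$ (the targets $e_0, e_1, x_0 y_{j-1}, x_0 y_j$ are all $\prec$-smaller than $x_1 y_{j-1}$ in the admissible order of \S\ref{subsubsection:zkreduction}), so $H \subset \Hom(\Bbbk S, A)_\prec$ and Proposition \ref{proposition:Groebner} supplies an algebraization at every point of $V = H \cap V_\prec$. The main obstacle is simply the bookkeeping of the right-most reductions, particularly keeping straight that $x_0 y_l x_1$ is already irreducible while $y_l x_0$ triggers a further reduction producing the telltale $-\lambda_l x_0$ term; once this is handled, everything else is a formal consequence of Theorems \ref{theorem:higher-brackets} and \ref{theorem:variety}.
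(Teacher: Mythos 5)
Your proposal is correct and follows essentially the same route as the paper: the paper also derives \eqref{align:euqationvariety} by applying Theorem \ref{theorem:higher-brackets} to $\widetilde\phi = \mu_0\beta_0 + \mu_1\beta_1 + \sum_j \lambda_j\alpha_j$ and checking associativity of $\star_{\phi+\widetilde\phi}$ on the overlaps $x_1 y_j x_0 \in S_3$, with Theorem \ref{theorem:variety} and the inclusion $H \subset \Hom(\Bbbk S, A)_\prec$ supplying the variety structure and the algebraizations. Your explicit right-most-reduction computation, including the cancellation of the $\lambda_j x_1$ terms and the residual defect $-(\mu_0\lambda_j + \mu_1\lambda_{j+1})x_0$, agrees with the paper's (unspelled-out) calculation.
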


For example, for $k = 3$ the variety $V$ in Proposition \ref{proposition:varietyzk} is isomorphic to an ordinary double point threefold singularity (conifold) given by $x y - z w = 0$ in $\mathbb A^4$.

In \S\ref{subsubsection-simultaneousdeformations}, this variety will also be viewed as a variety of actual simultaneous deformations of the $\frac{1}k (1,1)$ singularity.

\subsection{Diagram algebra {\it vs} tilting bundle}
\label{subsection:diagramtilting}

Recall from Proposition \ref{proposition:derived} that on the level of formal deformations, the two approaches to deformations of $\Qcoh (X)$ --- via deformations of the diagram algebra $\mathcal O_X \vert_{\mathfrak U}!$ or via deformations of the endomorphism algebra $\End \mathcal E$ of a tilting bundle --- are derived equivalent.

This also holds for actual deformations, as can be checked directly for the actual deformations corresponding to commutative (\S\ref{subsubsection:zkcommutativediagram} and \S\ref{subsubsection:zkcommutative}) and noncommutative deformations (\S\ref{subsubsection:diagramnoncommutative} and \S\ref{subsubsection:zknoncommutative}).

Although the derived equivalence between $\Qcoh (Z_k)$ and $\Mod (A)$ lifts to their formal and actual deformations, it may happen that $A$ admits an algebraization that does not appear to give rise to an algebraization of $\mathcal O_{Z_k} \vert_{\mathfrak U}$.\footnote{A similar phenomenon can also be observed for $\mathbb P^2$ when comparing deformations of the diagram $\mathcal O_{\mathbb P^2} \vert_{\mathfrak U}$, where $\mathfrak U$ is the closure of the standard affine open cover $\{ U_0, U_1, U_2 \}$ under intersections, to deformations of the endomorphism algebra of the tilting bundle $\mathcal E = \mathcal O_{\mathbb P^2} \oplus \mathcal O_{\mathbb P^2} (1) \oplus \mathcal O_{\mathbb P^2} (2)$. Deformations of the $15$-dimensional algebra $\End \mathcal E$ are unobstructed, so one can obtain a versal family of deformations over $(\mathbb A^{10}, 0)$, where $\dim \HH^2 (\mathbb P^2) = 10$, corresponding geometrically to quantizations of Poisson structures on $\mathbb P^2$. However, only some of these Poisson structures give rise to algebraizable deformations of the diagram $\mathcal O_{\mathbb P^2} \vert_{\mathfrak U}$.}

In particular, consider the element 
$\widetilde \Phi \in \Hom(\Bbbk S, \Bbbk Q^{\mathrm{diag}} / J) \hatotimes (t_0', t_1', t_1, \dotsc, t_{k-1})$ given by 
\begin{flalign*}
&& \widetilde \Phi(wx) &= \alpha(x) w & \widetilde \Phi(wy)      &= \beta(y) w     & \widetilde \Phi(uf) &= \sum_{j=1}^{k-1} f y^{j} t_{j} && \\[-1em]
&& \widetilde \Phi(uz) &= \alpha(z) u & \widetilde \Phi(v \zeta) &= \beta(\zeta) v &&&&
\end{flalign*}
where 
$\alpha(x) = t_0' +  x t_1'$ and $\beta(y) = \sum_{i=1}^{\infty} y (-yt_0' - t_1')^i$. That is, $\widetilde \Phi$ corresponds to arbitrary commutative and certain noncommutative deformations --- indeed exactly those corresponding to the $2$-cocycles $\alpha_1, \dotsc, \alpha_{k-1}, \beta_0, \beta_1$ in \S\ref{subsubsection:diagramvariety}. By Theorem \ref{theorem:higher-brackets} it follows that $\widetilde \Phi$ is a Maurer--Cartan element if and only if 
\begin{flalign}\label{align:euqationvariety1}
&& t_0' t_j + t_1' t_{j+1} = 0  &&  \llap{for each $ 1 \leq j \leq k-2$.}
\end{flalign}
We thus obtain a {\it formal} deformation $({\mathcal O_{Z_k} \vert_{\mathfrak U}!} \hatotimes B, \star_{\phi + \widetilde \phi})$ of the diagram algebra $\mathcal O_{Z_k} \vert_{\mathfrak U}!$ over the complete local Noetherian $\Bbbk$-algebra
\[
B = \Bbbk \llrr{t_0', t_1', t_1, \dotsc, t_{k-1}} / (t_0' t_j + t_1' t_{j+1})^{\widehat{\;\;}}_{1 \leq j \leq k-2}.
\]
Although we saw in \S\ref{subsection:zkdiagram} that all purely commutative deformations of $\mathcal O_{Z_k} \vert_{\mathfrak U}!$ and certain purely noncommutative deformations admit algebraizations, the formal simultaneous deformation over $B$ does not appear to admit an obvious algebraization. However, comparing \eqref{align:euqationvariety} and \eqref{align:euqationvariety1}, we note that $B$ is exactly the formal completion of the coordinate ring of the variety $V$ of actual deformations of $A$ given in Proposition \ref{proposition:varietyzk}, reflecting the fact that $\mathcal O_{Z_k} \vert_{\mathfrak U}!$ and $A = \End (\mathcal O_{Z_k} \oplus \mathcal O_{Z_k} (1))$ have the same {\it formal} deformation theory.

\section{Deformations of singularities via their noncommutative resolutions}
\label{section:singularities}

In this section we apply the techniques developed thus far to study deformations of singularities via their noncommutative resolutions. Let $X = \Spec C$ be an affine variety with an isolated singularity. The commutative deformation theory of the singularity is well understood: deformations are parametrized by the second Harrison cohomology group $\Har^2 (C)$, which is finite-dimensional when the singularities are isolated, with obstructions in $\Har^3 (C)$ (see e.g.\ Stevens \cite{stevens2}). For many classes of singularities versal deformation spaces have been constructed \cite{altmann2,stevens,stevens2}. These versal deformation spaces are varieties of ``actual'' deformations which completely describe the commutative deformation theory of the singularity. In particular, the formal completion of the versal family at the point corresponding to the original algebra captures the formal deformation theory of $C$.

However, $\Har^2 (C)$ is usually strictly contained in $\HH^2 (C)$ and the latter is often infinite-dimensional even if $C$ has an isolated singularity (see \S\ref{subsection:xk} below for an example). As $\HH^2 (C)$ parametrizes associative deformations of $C$, an unobstructed $2$-cocycle will parametrize a {\it formal} noncommutative deformation of $C$, but in general formal noncommutative deformations of $C$ will not necessarily admit an algebraization, so that one cannot expect a versal deformation space for the full associative deformation theory of $C$ to exist.

However, using the notion of an admissible order for a suitable quiver with relations, one can construct varieties $V_\prec$ of actual deformations of $C$ which contain at least some (often interesting) noncommutative deformations. In \S\ref{subsection:noncommutativeresolutions} we show how to use noncommutative resolutions of the singularity to obtain such varieties and in \S\ref{subsection:xk} we illustrate this for the cyclic $\frac{1}k (1,1)$ surface quotient singularities, which can be linked to the geometry of the deformation theory of $\Qcoh (Z_k)$ described in \S\ref{section:totalspaces}.

\subsection{Deformations of noncommutative resolutions}
\label{subsection:noncommutativeresolutions}

Let $C$ be a (singular) Noetherian commutative $\Bbbk$-algebra. A {\it noncommutative resolution} of $C$ is a $\Bbbk$-algebra of the form $A = \End_C (C \oplus M)$, where $M$ is a finitely generated $C$-module, such that $A$ has finite global dimension (see e.g.\ \cite{vandenbergh0, daoiyamatakahashivial}). Let $e \in A$ be the idempotent of $A$ corresponding to the direct summand $C$ of $C \oplus M$ so that $C = eAe$. Writing $A$ as $\Bbbk Q / I$, with $e$ corresponding to a vertex of $Q$, we have the following general result.

\begin{proposition}
\label{theorem:singularitypqr}
Let $A = \End_C (C \oplus M) \simeq \Bbbk Q / I$ be a noncommutative resolution of $C$. Let $R$ be a reduction system satisfying \textup{($\diamond$)} for $I$. Then $\mathfrak p (Q, R)$ controls the deformation theory of $A$. 

Moreover, for any Maurer--Cartan element $\widetilde \phi$ of $\mathfrak p (Q, R) \hatotimes \mathfrak m$, the deformation $\widehat A_{\phi+\widetilde \phi}$ of $A$ induces a deformation $e \widehat A_{\phi + \widetilde \phi} e$ of $C$. 

For any degree condition \eqref{degreecondition}, the variety $V_\prec$ of actual deformations of $A$ gives rise to a family of actual deformations of $C$. 
\end{proposition}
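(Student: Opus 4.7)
The plan is to split the proposition into its three assertions and address each in turn, with the main conceptual point being that vertex idempotents of $\Bbbk Q$ survive unchanged in the combinatorial star product.

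\textbf{First assertion.} Since by hypothesis $A \simeq \Bbbk Q/I$ and $R$ satisfies $(\diamond)$ for $I$, this is a direct application of Theorem \ref{theorem:BW1}: the L$_\infty$ algebra $\mathfrak p(Q,R)$ is L$_\infty$ quasi-isomorphic to $\mathfrak h(A)$ and hence controls the deformation theory of $A$.

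\textbf{Second assertion.} The strategy is to verify that the vertex idempotent $e \in Q_0 \subset \Bbbk Q$ corresponding to the summand $C$ of $C \oplus M$ remains an idempotent in the deformed algebra $\widehat A_{\phi+\widetilde\phi} = (A \hatotimes B, \star_{\phi+\widetilde\phi})$. This holds because $S \subset Q_{\geq 2}$, so the combinatorial star product performs no reduction on paths of length $\leq 1$; in particular $e \star e = e$, and for any $a \in A$ the element $e \star a \star e$ equals the ordinary corner product $eae$ whenever $a$ is an irreducible path, extended $B$-linearly otherwise. Consequently $e \widehat A_{\phi+\widetilde\phi} e$ is an associative subalgebra of $\widehat A_{\phi+\widetilde\phi}$. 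Using the Diamond Lemma identification $A \simeq \Bbbk \Irr_S$, the basis $\Irr_S$ splits according to source and target vertex, so the $B$-module underlying $e\widehat A_{\phi+\widetilde\phi}e$ is canonically isomorphic to $eAe \hatotimes B = C \hatotimes B$. The restricted multiplication $\star_{\phi+\widetilde\phi}$ reduces modulo $\mathfrak m$ to the original multiplication on $C = eAe$, so $e \widehat A_{\phi+\widetilde\phi} e$ is a formal deformation of $C$ over $(B,\mathfrak m)$.

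\textbf{Third assertion.} If $\widetilde\phi \in \Hom(\Bbbk S, A)_\prec$ is a Maurer--Cartan element satisfying the degree condition $(\prec)$, then by Proposition \ref{proposition:Groebner} $(\mathit{ii})$ the formal deformation admits the $\Bbbk[t_1,\dotsc,t_N]$-algebra $(A[t_1,\dotsc,t_N], \star_{\phi+\widetilde\phi})$ as an algebraization. Taking the corner by $e$ as in the second assertion yields a $\Bbbk[t_1,\dotsc,t_N]$-algebra structure on $C[t_1,\dotsc,t_N]$ that algebraizes the formal deformation $e \widehat A_{\phi+\widetilde\phi} e$ of $C$. By Theorem \ref{theorem:variety}, the points of $V_\prec$ parametrize such $\widetilde\phi$ up to the groupoid $G_\prec \rightrightarrows V_\prec$, so specialization $t_i \mapsto \lambda_i$ at any closed point of $V_\prec$ yields an actual deformation of $A$ and hence, by passing to the $e$-corner, an actual deformation of $C$.

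\textbf{Main obstacle.} No step is genuinely hard: the one point that needs care is checking that the corner construction is compatible with $\star_{\phi+\widetilde\phi}$, which reduces to the observation that vertex idempotents are never involved in a reduction and so remain orthogonal idempotents after deformation. One could alternatively phrase this as a general lifting-of-idempotents argument, but in the present combinatorial setting the statement is essentially built into the construction of $\star_{\phi+\widetilde\phi}$.
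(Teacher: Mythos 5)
Your proposal is correct, and its first assertion is handled exactly as in the paper (a direct appeal to Theorem \ref{theorem:BW1}). For the second and third assertions you take a genuinely more hands-on route: you work directly with the combinatorial star product, observing that since $S \subset Q_{\geq 2}$ and $\widetilde\phi$ is a $\Bbbk Q_0$-bimodule map (so reductions preserve sources and targets), the vertex idempotent $e$ is untouched by $\star_{\phi+\widetilde\phi}$, the corner $e \widehat A_{\phi+\widetilde\phi} e$ is closed under $\star$, and its underlying $B$-module is $eAe \hatotimes B$, giving a flat deformation of $C = eAe$; the algebraization statement then follows by taking corners in Proposition \ref{proposition:Groebner}. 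The paper instead argues structurally: the restriction map $\mathfrak h(A) \to \mathfrak h(eAe)$ is a morphism of DG Lie algebras, and composing with the L$_\infty$ quasi-isomorphism of Theorem \ref{theorem:BW1} yields an L$_\infty$ morphism $\mathfrak p(Q,R) \to \mathfrak h(eAe)$ inducing a map of deformation functors, under which Maurer--Cartan elements (formal or actual) are sent to deformations of $C$. The paper's approach buys a morphism between the controlling objects themselves --- which is reused later in \eqref{morphism} to compare with the singular Hochschild complex --- while yours buys an explicit identification of the induced deformation of $C$ as the literal corner algebra with its restricted star product, together with a transparent algebraization; the two arguments are compatible and each is complete for the statement as given.
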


\begin{proof}
The first assertion follows from Theorem \ref{theorem:BW1}. For the second assertion, note that the restriction map $\mathfrak h (A) \tikzto \mathfrak h (eAe)$ is a morphism of DG Lie algebras. Composing this with the L$_\infty$ quasi-isomorphism in Theorem \ref{theorem:BW1}, we obtain an L$_\infty$ morphism $\mathfrak p (Q, R) \tikzto \mathfrak h (eAe)$ which thus induces a map between the deformation functors. For any $A_\lambda \in V_\prec$, $e A_\lambda e$ is an actual deformation of $C = e A e$.
\end{proof}

Note that noncommutative resolutions of a commutative singularity can sometimes be obtained from geometric resolutions of the singularity, as is the case for cyclic quotient surface singularities (see Wemyss \cite{wemyss2} and \S\ref{subsection:xk} below for a particular case).

Finding ``actual'' noncommutative deformations of singularities is in general difficult. By using Kontsevich's formality morphism Filip \cite{filip} showed that Poisson structures on singular affine toric varieties can always be formally quantized. From this formal statement, however, it is unclear how to construct any explicit algebraizable quantizations. Theorem \ref{theorem:singularitypqr} shows that we can use the degree condition \eqref{degreecondition} to construct actual noncommutative deformations for example via noncommutative resolutions of the singularity, giving a whole variety $V_\prec$ of actual deformations.

\subsection{Deformations and singularity categories}

Recall that the singularity category $\D_{\mathrm{sg}} (A)$ of a Noetherian (not necessarily commutative) algebra $A$ is defined as the Verdier quotient of the bounded derived category of finitely generated $A$-modules by the full subcategory of perfect complexes. This notion was first introduced by Buchweitz \cite{buchweitz1} and then rediscovered by Orlov \cite{orlov} in the context of Landau--Ginzburg models in homological mirror symmetry.

One may now ask whether the singularity category changes under deformations of the singularity. In particular, about the variety $V_\prec$ obtained from actual deformations of a noncommutative resolution we can ask the following question.

\begin{question}
\label{question}
For which points $v \in V_{\prec}$ does $e A_v e$ have the same singularity category as $C = e A e$?
\end{question} 

The singularity category of a singular algebra is always nontrivial. In general we expect the singularity category of $e A_v e$ to become  ``smaller'' for {\it commutative} deformations. For example, if the induced deformation of the singularity is smooth, the singularity category is trivial. On the other hand, Remark \ref{remark:singularityinvariant} showed that for a certain purely noncommutative deformation of the $\mathrm A_1$ singularity, the singularity category remained invariant. Indeed, it seems natural to conjecture that ``purely noncommutative'' deformations of commutative isolated singularities leave the singularity category invariant.

\subsubsection{A deformation-theoretical perspective}

In order to determine whether two algebras have equivalent singularity categories, one generally needs a larger set of tools and techniques than those introduced in this article. (See for example Kalck--Karmazyn \cite{kalckkarmazyn} for singular equivalences between cyclic quotient surface singularities and certain finite-dimensional algebras.) However, a general deformation-theoretical framework for answering the above question is the following. We have an L$_\infty$ morphism 
\begin{equation}
\label{morphism}
\mathfrak p(Q, R) \tikzto \mathfrak h (eAe) \tikzhookrightarrow \mathfrak h_{\mathrm{sg}} (eAe) 
\end{equation}
where the first arrow is the L$_\infty$ morphism given in the proof of Proposition \ref{theorem:singularitypqr} and $\mathfrak h_{\mathrm{sg}} (eAe) = (\mathrm C_{\rm sg}^{\hdot + 1}(eAe, eAe), d, [\blank {,} \blank])$ is a DG Lie algebra whose underlying cochain complex is the (shifted) {\it singular Hochschild cochain complex} $\mathrm C_{\mathrm{sg}}^\hdot (eAe, eAe)$ which computes the singular Hochschild cohomology $\HH_{\rm sg}^\hdot (eAe)$ (see \cite{wang}). The DG Lie algebra $\mathfrak h_{\mathrm{sg}} (eAe)$ admits $\mathfrak h (eAe)$ as a DG Lie subalgebra and Keller's conjecture in \cite{keller} implies in particular that the deformation theory of the DG singularity category of $eAe$ is controlled by $\mathfrak h_{\mathrm{sg}} (eAe)$.

The L$_\infty$ morphism \eqref{morphism} induces a map between the Maurer--Cartan spaces. Let $\widetilde \phi$ be a Maurer--Cartan element of $\mathfrak p(Q, R)$ and let $A_{\phi + \widetilde \phi}$ be the corresponding (actual) deformation of $A$. If the image of $\widetilde \phi$ is gauge equivalent to the zero Maurer--Cartan element of $\mathfrak h_{\mathrm{sg}} (eAe)$, then by the general philosophy of deformation theory, we expect that the singularity category $\D_{\rm sg} (e A_{\phi + \widetilde \phi} e)$ does not change. In other words, one expects that a deformation has the same singularity category when the corresponding $2$-cocycle lies in the kernel of the map $\HH^2 (A) \tikzto \HH_{\rm sg}^2 (eAe)$ induced by \eqref{morphism}.

Let us point to some evidence for this. If $eAe$ is an affine hypersurface with an isolated singularity then $\HH_{\rm sg}^2 (eAe)$ is isomorphic to the Harrison cohomology $\Har^2 (eAe)$ (see Lemma \ref{lemma:hh2hypersurface} and \cite[\S4]{keller}) so that the kernel of the map $\HH^2(eAe) \tikzto \HH^2_{\rm sg}(eAe)$ coincides with the direct summand $\mathcal N$ in Lemma \ref{lemma:hh2hypersurface} which correspond to purely noncommutative deformations. In line with the above reasoning, this gives a deformation-theoretic interpretation for Crawford's singular equivalence in Remark \ref{remark:singularityinvariant} since the $2$-cocycle corresponding to the deformation $e_0A_{0, \mu} e_0 \simeq \U (\mathfrak{sl}_2) / (C + \frac{1}{2})$ lies in $\mathcal N$ (see Remark \ref{remark:hypersurfaces}). Note that the image of the $2$-cocyle corresponding to the deformation $e_0 A_{\lambda, \mu} e_0$ for $\lambda \neq 0$ is nonzero along the map $\HH^2(eAe) \tikzto \HH^2_{\rm sg}(eAe)$ and the singularity category $\D_{\rm sg}(e_0 A_{\lambda, \mu} e_0)$ does indeed change --- it is trivial since $e_0 A_{\lambda, \mu} e_0$ is of finite global dimension.

\subsection{The $\frac{1}k (1,1)$ surface singularities}
\label{subsection:xk}

We now apply the above scheme of studying noncommutative deformations of singularities via their noncommutative resolutions to the $\frac{1}k (1,1)$ singularity, which also allows us to apply the results obtained in \S\ref{section:totalspaces}. In the rest of this section we work over $\Bbbk = \mathbb C$.

Let $X_k$ be the $\frac{1}k (1,1)$ singularity
\[
X_k = \mathbb C^2 / \Gamma = \Spec C
\]
where
\[
C = \mathbb C [z_0, \dotsc, z_k] / (z_i z_{j+1} - z_{i+1} z_j)_{0 \leq i < j < k}
\]
and $\Gamma < \GL_2 (\mathbb C)$ is a cyclic group of order $k$ with the generator acting on $\mathbb C^2$ by
\[
\begin{pmatrix}
\omega & 0 \\
0 & \omega
\end{pmatrix}
\]
for $\omega$ some primitive $k$th root of unity. (For $k = 2$ we have that $\Gamma < \SL_2 (\mathbb C)$ and the $\frac12 (1,1)$ singularity is the ``usual'' $\mathrm A_1$ surface singularity.)

Note that the coordinate ring $C$ of $X_k$ is the ring of global functions on $Z_k = \Tot \mathcal O_{\mathbb P^1} (-k)$ and $Z_k$ is the minimal resolution of $X_k$. Recall from \S\ref{section:totalspaces} that $Z_k$ admits a tilting bundle $\mathcal O_{Z_k} \oplus \mathcal O_{Z_k} (1)$ whose endomorphism algebra $A$ is given in \eqref{quiverzk}. Since $Z_k$ is smooth, $A$ is of finite global dimension and in fact \cite{wemyss2} $A$ is a noncommutative resolution of $X_k$ (cf.\ \S\ref{subsection:noncommutativeresolutions}). Indeed, letting $e_0$ denote the idempotent corresponding to the direct summand $\mathcal O_{Z_k}$, we have $e_0 A e_0 = C$.

Writing $A = \mathbb C Q^{\mathrm{tilt}} / I$ and letting $R^{\mathrm{tilt}}$ be the reduction system \eqref{reductionsystem}, we saw that $\mathfrak p (Q^{\mathrm{tilt}}, R^{\mathrm{tilt}})$ controls the deformation theory of $\Qcoh (Z_k)$. As in \eqref{morphism} we have an L$_\infty$ morphism
\[
\mathfrak p (Q^{\mathrm{tilt}}, R^{\mathrm{tilt}}) \tikzto \mathfrak h (C).
\]
Deformations of $\Qcoh (Z_k)$ described in \S\ref{section:totalspaces} thus induce deformations of $X_k = \Spec C$.

\subsubsection{Commutative deformations}

Let us first recall the commutative deformation theory of the singularity $X_k = \Spec C$. Note that the relations of $C$ can be written in matrix form as follows:
\begin{align}
C &\simeq \mathbb C [z_0, \dotsc, z_k] \Bigm/ \Bigl( \mathrm{rank} \begin{psmallmatrix}
z_0 & z_1 & \cdots & z_{k-2} & z_{k-1}\\
z_1 & z_2 & \cdots & z_{k-1} & z_k
\end{psmallmatrix} \leq 1 \Bigr) \label{presentation1} \\
&\simeq \mathbb C [z_0, \dotsc, z_k] \Bigm/ \Bigl( \mathrm{rank} \begin{psmallmatrix}
z_0 & z_1 & \cdots & z_{k-3} & z_{k-2} \\
z_1 & z_2 & \cdots & z_{k-2} & z_{k-1} \\
z_2 & z_3 & \cdots & z_{k-1} & z_k
\end{psmallmatrix} \leq 1 \Bigr). \label{presentation2}
\end{align}
The commutative deformations of $C$ are parametrized by the Harrison cohomology $\Har^2 (C) \subset \HH^2 (C)$. We have a decomposition
\[
\Har^2 (C) \simeq \mathbb C^{k-1} \oplus \mathbb C^{k-3}
\]
and for all $k \geq 2$ there exists a versal family with a component $\mathbb A^{k-1}$ called {\it Artin component} corresponding to the first summand of $\Har^2 (C)$ (see for example \cite{altmann,pinkham,riemenschneider,stevens}). For $k = 4$ the versal family also contains a second irreducible component $\mathbb A^1$ and for $k > 4$ an embedded component at $0 \in \mathbb A^{k-1}$ (see \cite[p.~11]{stevens2}). In the Artin component, the fibres of this family may be described explicitly by modifying the matrices in the presentation (\ref{presentation1}) to read
\begin{align*}
\begin{pmatrix}
z_0 & z_1 + \lambda_1 & z_2 + \lambda_2 & \cdots & z_{k-1} + \lambda_{k-1}  \\
z_1 & z_2             & z_3             & \cdots & z_k
\end{pmatrix}
\end{align*}
where $(\lambda_1, \dotsc, \lambda_{k-1}) \in \mathbb C^{k-1}$.

The algebras $e_0 A_\lambda e_0$ in Proposition \ref{prop:zkcommutative} correspond to the Artin component in the versal family of commutative deformations of $C = e_0 A e_0$. That is, commutative deformations of $Z_k$ parametrized by $\H^1 (\mathcal T_{Z_k}) \simeq \mathbb C^{k-1}$ as described in \S\ref{section:totalspaces} induce commutative deformations of the $\frac1k(1,1)$ singularity.

\subsubsection{Simultaneous deformations}
\label{subsubsection-simultaneousdeformations}

Recall from Proposition \ref{proposition:varietyzk} the variety $V \subset V_\prec$. Then from Proposition \ref{theorem:singularitypqr} we obtain the following.

\begin{corollary}
Each point $v = (\mu_0, \mu_1, \lambda_1, \dotsc, \lambda_{k-1})$ in the variety $V$ gives an actual deformation $e_0 A_v e_0$ of $e_0 A e_0 \simeq C$, where we denote by $A_v$ the actual deformation of $A$ corresponding to a point $v \in V$. 
\end{corollary}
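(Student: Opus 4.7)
My plan is to assemble the corollary from three previously established facts: the description of $V$ via Maurer--Cartan elements in Proposition \ref{proposition:varietyzk}, the algebraization criterion in Proposition \ref{proposition:Groebner}, and the restriction mechanism in Proposition \ref{theorem:singularitypqr}.

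First, given a point $v = (\mu_0, \mu_1, \lambda_1, \dotsc, \lambda_{k-1}) \in V$, I would form the element
\[
\widetilde \phi_v = \mu_0 \beta_0 + \mu_1 \beta_1 + \sum_{j=1}^{k-1} \lambda_j \alpha_j \in \Hom(\Bbbk S, A).
\]
By construction $V = H \cap V_\prec$, and the defining equations $\mu_0 \lambda_j + \mu_1 \lambda_{j+1} = 0$ of $V$ were obtained in Proposition \ref{proposition:varietyzk} as precisely the conditions for $\star_{\phi + \widetilde \phi_v}$ to be associative on $S_3$; hence, by Theorem \ref{theorem:higher-brackets}, $\widetilde \phi_v$ is a Maurer--Cartan element of $\mathfrak p(Q^{\mathrm{tilt}}, R^{\mathrm{tilt}})$.

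Next I would verify the degree condition \eqref{degreecondition} so as to apply Proposition \ref{proposition:Groebner}. This is immediate from the explicit formulas \eqref{equation:cocyclezk}: each $\alpha_i$ sends elements of $S$ to the idempotents $e_0, e_1$ (which are strictly smaller than any element of $S$ in $\prec$), and each $\beta_l$ for $l = 0, 1$ sends $x_1 y_{j-1} \in S$ to $x_0 y_{j-1+l}$, which is smaller than $x_1 y_{j-1}$ in the lexicographic order $\prec$ defined in \S\ref{subsubsection:zkreduction}. (This is precisely why $H$ was chosen without a $\beta_2$ summand, since $\beta_2 \notin \Hom(\Bbbk S, A)_\prec$.) Hence Proposition \ref{proposition:Groebner} applies and produces an algebraization $A_v := (A, \star_{\phi + \widetilde \phi_v})$ of $A$, which is an actual (not merely formal) deformation.

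Finally, I would invoke Proposition \ref{theorem:singularitypqr} with $e = e_0$: since $A$ is a noncommutative resolution of $C$ with $e_0 A e_0 = C$, composing the L$_\infty$ quasi-isomorphism $\mathfrak p(Q^{\mathrm{tilt}}, R^{\mathrm{tilt}}) \to \mathfrak h(A)$ of Theorem \ref{theorem:BW1} with the restriction of DG Lie algebras $\mathfrak h(A) \to \mathfrak h(e_0 A e_0) = \mathfrak h(C)$ sends $\widetilde \phi_v$ to a Maurer--Cartan element controlling the algebra $e_0 A_v e_0$, exhibiting it as an actual deformation of $C$. There is no real obstacle here beyond bookkeeping; the only subtle point is confirming that restriction by the idempotent $e_0$ preserves the algebraization property, but this is automatic, since the multiplication $\star_{\phi + \widetilde \phi_v}$ on $A$ is already polynomial in the parameters $\mu_0, \mu_1, \lambda_1, \dotsc, \lambda_{k-1}$, hence so is its restriction to the corner $e_0 A e_0$.
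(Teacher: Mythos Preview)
Your proof is correct and follows essentially the same route as the paper, which simply invokes Proposition \ref{proposition:varietyzk} (giving $V = H \cap V_\prec$) and then applies the last assertion of Proposition \ref{theorem:singularitypqr}. Your version is more explicit --- you re-verify the degree condition \eqref{degreecondition} for $\alpha_i, \beta_0, \beta_1$ and spell out the algebraization and restriction steps --- but these details were already absorbed into the cited propositions, so no new ideas are needed.
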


Note that for $\lambda_1 = \dotsb = \lambda_{k-1} = 0$ the equations defining $V$ are satisfied for any $\mu_0, \mu_1$ so that the points $v = (\mu_0, \mu_1, 0, \dotsc, 0)$ correspond to ``purely noncommutative'' deformations of $X_k$ induced by algebraizable deformation quantizations of algebraic Poisson structures on $Z_k$. Similarly, for $\mu_0 = \mu_1 = 0$ one obtains the irreducible $(k-1)$-dimensional component of ``purely commutative'' deformations of $X_k$. All other points in $V$ may be viewed as ``simultaneously commutative and noncommutative'' deformations of $X_k$.

\begin{remark}
For each $\mu_1 \in \mathbb C$, the point $v = (0, \mu_1, 0, \dotsc, 0)$ corresponds to a ``$q$-deformation'' $e_0 A_v e_0$ of $X_k$. In this case, we have 
\[
e_0 A_v e_0 \simeq \mathbb C \langle z_0, \dotsc, z_{k} \rangle / I
\]
where $I$ is the two-sided ideal generated by 
\begin{flalign*}
&& z_i z_j & -q^{j-i} z_j z_i   && \llap{for $0 \leq i < j \leq k$} \\
&& z_j z_i & -q z_{j+1} z_{i-1} && \llap{for $1 < i \leq j < k$}\\
&& z_j z_1 & -z_{j+1} z_0 && \llap{for $1 \leq j < k$}
\end{flalign*}
where $q = 1 + \mu_1$. (This can be seen by constructing an algebra isomorphism which sends $x_1 y_{k-1}$ to $z_0$ and $x_0 y_{k-i}$ to $z_{i}$ for each $1 \leq i \leq k$.)
It turns out that this family of $q$-deformations leaves the singularity category of the $\frac1k(1,1)$ singularity invariant (cf.\ Remark \ref{remark:singularityinvariant} and Question \ref{question}). The details for the general case of cyclic quotient surface singularities will appear in future work joint with M.~Kalck.
\end{remark}

\subsection*{Acknowledgements}

We would like to thank Tarig Abdelgadir, Pieter Belmans, Johan Commelin, William Crawley-Boevey, Matej Filip, Lutz Hille, Martin Kalck, Bernhard Keller, Travis Schedler, Michel Van den Bergh, Matt Young and Zhiwei Zheng for many helpful comments and discussions.

A large part of this work was carried out while both authors were supported by the Max Planck Institute for Mathematics in Bonn and later by the Hausdorff Research Institute for Mathematics in Bonn as part of the Junior Trimester Program ``New Trends in Representation Theory'', funded by the Deutsche Forschungsgemeinschaft (DFG, German Research Foundation) under Germany's Excellence Strategy -- EXC-2047/1 -- 390685813. We warmly thank both institutes for the welcoming atmosphere and the excellent working environment.

The first named author was also supported by the DFG Research Training Group GK1821 ``Cohomological Methods in Geometry'' at the University of Freiburg. The second named author was also supported by a Humboldt Research Fellowship from the Alexander von Humboldt Foundation and by a National Natural Science Foundation of China (NSFC) grant with number 11871071.

\end{document}